\documentclass[11pt,reqno]{amsart}
\usepackage[english]{babel} \usepackage[latin1]{inputenc} \usepackage[T1]{fontenc}
\usepackage{amsmath,amssymb,amsfonts,amsthm,amscd}
\allowdisplaybreaks 
\usepackage{fontenc,indentfirst,delarray}
\usepackage{mathrsfs}

\usepackage{etex}\usepackage{tikz}\usetikzlibrary{matrix,arrows, positioning}
\usepackage{pictexwd, dcpic} %
\usepackage{enumitem}
\usepackage{cancel, color}
\usepackage{wasysym, verbatim, stmaryrd, arydshln, esint}
\usepackage[normalem]{ulem}

\makeatletter
\newcommand{\labitem}[2]{%
\def\@itemlabel{\textbf{#1}}
\item
\def\@currentlabel{#1}\label{#2}}
\makeatother

\numberwithin{equation}{section}

\setcounter{secnumdepth}{4}
\topskip 1.0cm

\tolerance=500 \textwidth15.6cm \textheight 22cm \hoffset -1.6cm
\voffset -1cm


\def\sideremark#1{\ifvmode\leavevmode\fi\vadjust{\vbox to0pt{\vss
 \hbox to 0pt{\hskip\hsize\hskip1em
 \vbox{\hsize3cm\tiny\raggedright\pretolerance10000
 \noindent #1\hfill}\hss}\vbox to8pt{\vfil}\vss}}}%

\newcommand{\edz}[1]{\sideremark{#1}}

\mathchardef\za="710B  
\newcommand{\bza}{\boldsymbol{\alpha}}  
\mathchardef\zb="710C  
\mathchardef\zg="710D  
\mathchardef\zd="710E  
\mathchardef\zve="710F 
\mathchardef\zz="7110  
\mathchardef\zh="7111  
\mathchardef\zvy="7112 
\mathchardef\zi="7113  
\mathchardef\zk="7114  
\mathchardef\zl="7115  
\mathchardef\zm="7116  
\mathchardef\zn="7117  
\mathchardef\zx="7118  
\mathchardef\zp="7119  
\mathchardef\zr="711A  
\mathchardef\zs="711B  
\mathchardef\zt="711C  
\mathchardef\zu="711D  
\mathchardef\zvf="711E 
\mathchardef\zq="711F  
\mathchardef\zc="7120  
\mathchardef\zw="7121  
\mathchardef\ze="7122  
\mathchardef\zy="7123  
\mathchardef\zvw="7124  
\mathchardef\zvr="7125 
\mathchardef\zvs="7126 
\mathchardef\zf="7127  
\mathchardef\zG="7000  
\mathchardef\zD="7001  
\mathchardef\zY="7002  
\mathchardef\zL="7003  
\mathchardef\zX="7004  
\mathchardef\zP="7005  
\mathchardef\zS="7006  
\mathchardef\zU="7007  
\mathchardef\zF="7008  
\mathchardef\zW="700A  
\newcommand{\zvk}{\varkappa}  

\newcommand{\cA}{\mathcal{A}}
\newcommand{\cuA}{\underline{\mathcal{A}}}
\newcommand{\cB}{\mathcal{B}}
\newcommand{\cuB}{\underline{\mathcal{B}}}

\newcommand{\cD}{\mathcal{D}}

\newcommand{\cI}{\mathcal{I}}
\newcommand{\cJ}{\mathcal{J}}

\newcommand{\cL}{\mathcal{L}}

\newcommand{\cX}{\mathcal{X}}
\newcommand{\cY}{\mathcal{Y}}


\newcommand{\f}{\mathfrak}

\newtheorem{thm}{Theorem}[section]

\newtheorem{thmintro}{Theorem}

\newtheorem{prop}[thm]{Proposition}
\newtheorem{lem}[thm]{Lemma}
\newtheorem{cor}[thm]{Corollary}
\newtheorem{defi}[thm]{Definition}
\newtheorem{exa}[thm]{Example}

\newtheorem{rema}[thm]{Remark}

\newcommand{\be}{\begin{equation}}
\newcommand{\ee}{\end{equation}}
\newcommand{\bea}{\begin{eqnarray}}
\newcommand{\eea}{\end{eqnarray}}
\newcommand{\beas}{\begin{eqnarray*}}
\newcommand{\eeas}{\end{eqnarray*}}
\newcommand{\bemap}{\be \begin{array}{lccc}}
\newcommand{\eemap}{\end{array}\ee}
\newcommand{\bemaps}{\[ \begin{array}{lccc}}
\newcommand{\eemaps}{\end{array}\]}


\newcommand{\N}{\mathbb{N}}
\newcommand{\Z}{\mathbb{Z}}
\newcommand{\R}{\mathbb{R}}
\newcommand{\K}{\mathbb{K}}
\newcommand{\C}{\mathbb{C}}

\newcommand{\qH}{\mathbb{H}}
\DeclareMathOperator{\qi}{i}
\DeclareMathOperator{\qj}{j}
\DeclareMathOperator{\qk}{k}

\newcommand{\la}{\langle}
\newcommand{\ra}{\rangle}
\newcommand{\lp}{\left(}
\newcommand{\rp}{\right)}

\newcommand{\raa}{\rightarrow}


\newcommand{\op}[1]{\!\!\mathop{\rm ~#1}\nolimits}

\DeclareMathOperator{\id}{id}
\DeclareMathOperator{\Id}{Id}

\DeclareMathOperator{\Obj}{Ob}

\newcommand{\I}{\mathbb{I}}

\newcommand{\pari}{\bar}
\newcommand{\degr}{\widetilde}
\newcommand{\evp}{_{\hspace{-0.1cm}\phantom{.}_{\overline{0\!}}}} 
\newcommand{\odp}{_{\hspace{-0.1cm}\phantom{.}_{\overline{1\!}}}} 

\newcommand{\ks}[2]{(-1)^{\la \;\ensuremath{#1},\; \ensuremath{#2}\, \ra}}
\DeclareMathOperator{\Hom}{Hom}

\DeclareMathOperator{\Aut}{Aut}
\DeclareMathOperator{\iHom}{\mathcal{H}\!\mathit{om}}
\DeclareMathOperator{\iEnd}{\mathcal{E}\!\!\;\textit{nd}}

\DeclareMathOperator{\Mat}{\mathsf{M}}
\DeclareMathOperator{\GL}{\mathsf{GL}}
\DeclareMathOperator{\hGL}{\mathsf{hGL}}


\DeclareMathOperator{\str}{str}

\DeclareMathOperator{\Tr}{\zG tr}

\DeclareMathOperator{\Ddet}{D\!\det}
\DeclareMathOperator{\Gdet}{\zG\!\!\det}

\DeclareMathOperator{\ber}{Ber}
\DeclareMathOperator{\Gber}{\zG\!\ber}

\newcommand{\zld}[2]{\zl\big(\, \degr{\ensuremath{#1}}, \degr{\ensuremath{#2}}\, )}
\newcommand{\zlsup}{\zl^{\op{super}}}
\newcommand{\zls}{\zl^{\zvs}}
\newcommand{\fS}{\mathfrak{S}}

\newcommand*{\low}[1]{\raisebox{-.25ex}{\ensuremath{{#1}}}}

\newcommand*{\EnsQuot}[2]%
{\ensuremath{%
    #1/\!\raisebox{-.65ex}{\ensuremath{{#2}}}}} 
\newcommand{\vect}{\mathbf} 

\newcommand{\ts}{\times}

\newcommand{\opl}{\oplus}

\newcommand{\ots}{\otimes}
\newcommand{\bots}{\low{\boxtimes}\,}

\newcommand{\w}{\wedge}

\newcommand{\Top}{\rule{0pt}{3ex}}
\newcommand{\und}[1]{\underline{\ensuremath{#1}\!}\,}

\newcommand{\cat}{\mathtt}
\newcommand{\CC}{\mathtt{C}}
\newcommand{\DD}{\mathtt{D}}
\newcommand{\gAlg}{\zG\mbox{-}\mathtt{Alg}}
\newcommand{\glAlg}{(\zG, \zl)\mbox{-}\mathtt{Alg}}
\newcommand{\glsAlg}{(\zG, \zls)\mbox{-}\mathtt{Alg}}
\newcommand{\sAlg}{\mathtt{SAlg}}

\newcommand{\sMod}{\mathtt{SMod}}
\newcommand{\Moda}{\zG\mbox{-}\mathtt{Mod}_{\cA}}
\newcommand{\Modabar}{\zG\mbox{-}\mathtt{Mod}_{\und{\cA}}}
\newcommand{\glLie}{(\zG, \zl)\mbox{-}\mathtt{LieAlg}}
\newcommand{\opp}{^{\op{op}}}
\newcommand{\Ivs}{I_{\zvs}}



\pagestyle{myheadings}\markright{}

\begin{document}

\title{Determinants over graded-commutative algebras, \\ a categorical viewpoint}
\date{}

\author{Tiffany Covolo}
\address[Tiffany Covolo]{ Universit\'e du Luxembourg, UR en math\'ematiques,
6, rue Richard Coudenhove-Kalergi, L-1359 Luxembourg,
Grand-Duch\'e de Luxembourg}
\email{tiffany.covolo@uni.lu, covolotiffany@gmail.com}

\author{Jean-Philippe Michel}
\address[Jean-Philippe Michel]{ Universit\'e de Li\`ege, Grande Traverse, 12, Sart-Tilman, 4000 Li\`ege, Belgique}
\email{jean-philippe.michel@ulg.ac.be}

\thanks{ TC thanks the
Luxembourgian NRF for support via AFR grant 2010-1 786207.
JPM thanks the Interuniversity Attraction
Poles Program initiated by the Belgian Science Policy Office for funding.
}

\vspace{2mm}
\noindent
\subjclass[2010]{15A15, 15A66, 15B33, 16W50, 16W55, 17B75, 18D15}
\medskip
\noindent
\keywords{graded-commutative algebras, equivalence of categories, trace, determinant, Berezinian}

	\begin{abstract}
We generalize linear superalgebra to higher gradings and commutation factors, given by arbitrary abelian groups and bicharacters.
Our central tool is an extension, to monoidal categories of modules, of the Nekludova-Scheunert faithful functor
between the categories of graded-commutative and supercommutative algebras.
As a result we generalize (super-)trace, determinant and Berezinian to graded matrices over graded-commutative algebras.
For instance, on homogeneous quaternionic matrices, we obtain a lift of the Dieudonn\'e determinant to the skew-field of quaternions.
    \end{abstract}

\maketitle \thispagestyle{empty}
\section*{Introduction} \label{Sec0}

Superalgebra theory, which relies on $\mathbb{Z}_2$-grading and Koszul sign rule, admits many applications and turns out to be a non-trivial generalization of the non-graded case, regarding e.g. linear superalgebra, with the notion of Berezinian, or the classification of simple Lie superalgebras. Its success, both in mathematics and physics, prompted from the outset mathematicians to look for generalizations.
Mirroring the superalgebras, were thence introduced first $(\Z_2)^n$-graded analogues \cite{RW1,RW2}, originally called  \emph{color algebras}, and then more general $\zG$-graded versions, for an arbitrary abelian group $\zG$.
These latter were introduced independently by Scheunert \cite{Sch} in the Lie algebra case, and by Nekludova in the commutative algebra case (see \cite{SoS}). The ``color'' character of the notions considered, is encoded in a pair $(\zG,\zl)$ consisting of a \emph{grading group} $\zG$ and a \emph{commutation factor} (or \emph{bicharacter}), i.e., a biadditive skew-symmetric map $\zl: \zG\ts\zG \to \K^{\ts}$, valued in the multiplicative group of the base field $\K$.
Then, a $(\zG,\zl)$-commutative algebra over $\K$ is an associative unital $\K$-algebra, which is  $\zG$-graded, i.e. $\cA= \oplus_{\zg\in\zG}\cA^{\zg}$ and $\cA^\za\cdot\cA^\zb\subset\cA^{\za+\zb}$ for all $\za,\zb\in\zG$, and which satisfies the commutation rule
$$
ab=\zl(\za,\zb)\, ba \;,
$$
for any homogeneous elements $a\in \cA^{\za}$, $b\in \cA^{\zb}$.

The attention to higher gradings is not just a mere question of generalization. Besides the original motives of a possible application to particle physics along the lines of the coupling of superalgebra and SUSY (see \cite{RW1,Sch}),
higher gradings appear naturally in different branches of mathematics. 
In geometry, they play a role in the theory of higher vector bundles \cite{GRo09}.
For instance, the algebra of differential forms over a supermanifold happens to be a $((\Z_2)^2,\zl)$-commutative algebra.
Moreover, many classical non-commutative algebras, such as the algebra of quaternions \cite{KNa84,Lyc, AM, AM1,MO} or the algebra of square matrices over $\C$ \cite{Bah}, can be regarded as $(\zG,\zl)$-commutative algebras for appropriate choices of grading group $\zG$ and commutation factor $\zl$. Particularly interesting examples are the Clifford algebras \cite{AM1}.  Indeed, they are the only simple $(\zG,\zl)$-commutative algebras, with $\zG$ finitely generated and $\zl: \zG \ts \zG \to \{\pm 1\}$  \cite{OM}. Moreover, the quaternion algebra is one of them, $\mathbb{H}\simeq \op{Cl}(0,2)$.
\\

We are interested in graded linear algebra over a graded commutative algebra.
While a basic topic, this is the starting point for many developments. Thus, a well-suited notion of tensor product lies at the heart
of a putative quaternionic algebraic geometry \cite{Joy98}, while Moore determinant plays a central role in quaternionic analysis \cite{Al03}.
In the general graded-commutative setting, the color Lie algebra of traceless elements, with respect to a graded trace, is basic  in the derivation based differential calculus \cite{GMW}.
Our aim is precisely to generalize trace, determinant and Berezinian to matrices with entries in a graded commutative algebra.
For quaternionic matrices, this is an historical and tough problem, pursued by eminent mathematicians including Cayley, Moore and Dieudonn\'e. We mention the Dieudonn\'e determinant, which is the unique group morphism $\mathrm{Ddet}:\GL(n,\mathbb{H})\to\mathbb{H}^\ts/[\mathbb{H}^\ts,\mathbb{H}^\ts]\simeq\mathbb{R}^\ts_+$, satisfying some normalization condition.

The notion of graded trace, for matrices with entries in a graded commutative algebra $\cA$, was introduced in \cite{Sch83}. The introduction of graded determinant and Berezinian  was done slightly after in \cite{KNa84}.
These objects have been rediscovered in \cite{COP}, where they received a unique characterization in terms of their properties.
In particular, the graded determinant is constructed by means of quasi-determinants and UDL decomposition of matrices. Right after,  a cohomological interpretation of the graded Berezinian has been given in \cite{Cov}, in a close spirit to the construction in \cite{KNa84}. The main motivations of the works \cite{COP,Cov} was to lay the ground for a geometry based on quaternions, or more generally on Clifford algebras, and thus restrict to $((\Z_2)^n,\zl)$-graded commutative algebras.
Note that, so far, the graded determinant is only defined for matrices of degree zero, which is a strong restriction as we will see.

In the present paper, we follow another approach to the problem of defining trace, determinant and Berezinian in the graded setting.
We use their formulation as natural transformations and the Nekludova-Scheunert functor between the categories of graded-commutative and supercommutative algebras.
Its generalization to monoidal categories of graded and supermodules allows us to pull-back the wanted natural transformations
to the graded setting. The morphisms properties of trace, determinant and Berezinian are then preserved if restricted to
categorical endomorphisms $f\in\mathrm{End}_\cA(M)$, of a free graded $\cA$-module $M=\oplus_{\zg\in\zG}M^\zg$. These are
 $\cA$-linear maps $f:M\rightarrow M$, which are homogeneous, $f(M^\zg)\subset M^{\zg+\tilde{f}}$ for all $\zg\in\zG$, and of
 degree $\tilde{f}=0$. If $M$ is finitely generated, the space of all $\cA$-linear maps, without assumption on degree, turns out to be equal to the space of internal morphisms $\iEnd_{\cA}(M)$. We prove that graded determinant and Berezinian admit a proper extension to all homogeneous endomorphisms and that the graded trace can be extended to any $\cA$-linear maps, while keeping their defining properties. Note that, in the case of quaternionic matrices, this yields a lifting of the Dieudonn\'e determinant from the quotient space $\mathbb{H}^\ts/[\mathbb{H}^\ts,\mathbb{H}^\ts]\simeq \R^{\ts}_{+}$ to homogeneous invertible quaternions.
Besides, contrary to the previous works \cite{KNa84,COP,Cov}, we take full advantage of the well-developed
theory of graded associative rings and their graded modules (see e.g. \cite{NOy04}). In particular, we clarify the notion of rank of a free graded $\cA$-module and establish isomorphisms between various matrix algebras. This shows that the graded determinant of degree zero matrices, over quaternion or Clifford algebras, boils down to the determinant of a real matrix, after a change of basis.

\medskip
We detail now the content of the paper and state the main theorems. The three first sections are of introductory nature, with one new result (Theorem \ref{Thm:A}). Section $4$ is dealing with the graded trace and constitutes a warm-up for the main subject of the paper, that is graded determinant and Berezinian. The grading group is always considered abelian and finitely generated.

In \textbf{Section 1}, we recall the basic notions of $(\zG,\zl)$-commutative algebra. In particular, the commutation factor induces a splitting of the grading group in two parts $\zG=\zG\evp\cup\zG\odp$, called \emph{even} and \emph{odd}, and then provides an underlying $\Z_{2}$-grading $\zf:\,\zG\to\Z_{2}$. This is the starting point of Nekludova-Scheunert equivalence \cite{Sch,SoS}. It
is given by a family of invertible functors $\Ivs:\,\glAlg\to\glsAlg$, where  the modified commutation factor $\zls$ factors through the underlying parity, i.e., such that $\zls(\za,\zb) = (-1)^{\zf(\za)\zf(\zb)}$ for all $\za,\zb\in\zG$. Hence the \emph{NS-functors} $I_\zvs$ are valued in supercommutative algebras, with extra  $\zG$-grading.
These functors are parameterized by peculiar biadditive maps $\zvs:\zG\times\zG\rightarrow \K^\ts$, called \emph{NS-multipliers}. For later reference, the set of such maps is denoted by $\fS(\zl)$.

In \textbf{Section 2}, we introduce the closed symmetric monoidal category $\Moda$ of graded modules over a $(\zG,\zl)$-commutative algebra $\cA$, following partly \cite{NOy04}.
We prove our first result: for every $(\zG,\zl)$-commutative algebra $\cA$ and every $\zvs\in\fS(\zl)$,
there exists a functor
\bemaps
 \widehat{I}_{\zvs}:& \Moda &\raa&\Modabar,
\eemaps
which can be completed into a closed monoidal functor. Here, $\cuA=I_{\zvs(\cA)}$ is a supercommutative algebra.
This yields in particular the following result, which is the starting point
of later developments.
\begin{thmintro}\label{Thm:A}
Let $M$ a graded module over a $(\zG,\zl)$-commutative algebra $\cA$. Then, for any map $\zvs\in\f{S}(\zl)$, the map $\zh_{\zvs}$ defined in \eqref{eta} is an $\cA^0$-module isomorphism
\bemaps
\zh_{\zvs}: &\iEnd_{\cA}(M)& \raa &\iEnd_{\cuA}\lp \widehat{I}_{\zvs}(M)\rp
\eemaps
such that
\be\label{etacomp}
 \zh_{\zvs}(f\circ g) = \zvs\lp\degr{f},\degr{g}\rp^{-1}\,  \zh_{\zvs}(f)\circ \zh_{\zvs}(g)\;,
\ee
  for any pair of homogeneous endomorphisms  $f,g\in \iEnd_{\cA}(M)$.
\end{thmintro}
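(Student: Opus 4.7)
The plan is to write the map $\zh_\zvs$ of \eqref{eta} explicitly and then verify in turn (i) that it lands in $\iEnd_{\cuA}(\widehat{I}_\zvs(M))$, (ii) that it is an $\cA^0$-module isomorphism, and (iii) the twisted composition identity \eqref{etacomp}. Concretely, on a homogeneous $f \in \iEnd_\cA(M)$ of degree $\degr{f}$ applied to a homogeneous $m \in M$ of degree $\degr{m}$, I expect
\[
\zh_\zvs(f)(m) \;=\; \zvs(\degr{f}, \degr{m})\, f(m),
\]
extended by $\K$-linearity. Here $\widehat{I}_\zvs(M)$ has the same underlying $\zG$-graded abelian group as $M$ but a $\zvs$-twisted $\cuA$-action $a \cdot_\zvs m = \zvs(\degr{a}, \degr{m})\, a m$, and the modified commutation factor on $\cuA = I_\zvs(\cA)$ is $\zls(\za, \zb) = \zvs(\za,\zb)\, \zvs(\zb,\za)^{-1}\, \zl(\za, \zb)$, as established in Section~1.

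The main obstacle is (i): checking that $\zh_\zvs(f)$ satisfies the $\zls$-twisted $\cuA$-linearity
\[
\zh_\zvs(f)(a \cdot_\zvs m) \;=\; \zls(\degr{f}, \degr{a})\, a \cdot_\zvs \zh_\zvs(f)(m).
\]
Expanding both sides via the definitions of $\zh_\zvs$ and $\cdot_\zvs$ and the $\zl$-twisted linearity $f(am) = \zl(\degr{f}, \degr{a})\, a f(m)$ built into $\iEnd_\cA(M)$ (from the closed monoidal structure on $\Moda$ recalled in Section~2), and cancelling the common factor $\zvs(\degr{a}, \degr{m})\, \zvs(\degr{f}, \degr{m})\, a f(m)$, the identity collapses to the scalar relation
\[
\zvs(\degr{f}, \degr{a})\, \zl(\degr{f}, \degr{a}) \;=\; \zls(\degr{f}, \degr{a})\, \zvs(\degr{a}, \degr{f}),
\]
which is precisely the NS-multiplier defining relation recalled above.

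Once (i) is in place the remainder is bookkeeping on the biadditive map $\zvs$. For (ii), biadditivity gives $\zvs(0,\cdot)=\zvs(\cdot,0)=1$, so $\cA^0 = \cuA^0$ as rings and the two actions agree in degree $0$; additivity of $\zh_\zvs$ is then immediate, and $\zh_\zvs(af)(m) = \zvs(\degr{f}, \degr{m})\, a f(m) = a\, \zh_\zvs(f)(m)$ for $a \in \cA^0$. Bijectivity is clear because $\zvs$ is $\K^\ts$-valued: the inverse sends a homogeneous $g$ to $m \mapsto \zvs(\degr{g}, \degr{m})^{-1} g(m)$, and reversing the calculation of (i) shows this inverse lands back in $\iEnd_\cA(M)$. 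Finally, \eqref{etacomp} follows from
\begin{align*}
(\zh_\zvs(f) \circ \zh_\zvs(g))(m)
&= \zh_\zvs(f)\bigl(\zvs(\degr{g}, \degr{m})\, g(m)\bigr) \\
&= \zvs(\degr{g}, \degr{m})\, \zvs(\degr{f}, \degr{g} + \degr{m})\, (f \circ g)(m) \\
&= \zvs(\degr{f}, \degr{g})\, \zvs(\degr{f} + \degr{g}, \degr{m})\, (f \circ g)(m) \\
&= \zvs(\degr{f}, \degr{g})\, \zh_\zvs(f \circ g)(m),
\end{align*}
where biadditivity of $\zvs$ is applied in each slot. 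The only non-routine step is (i), whose content is really that the $\lambda$-versus-$\lambda^\zvs$ discrepancy on $\cuA$ is exactly absorbed by the twist $\zvs$ on the module action.
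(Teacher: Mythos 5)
Your proof is correct and self-contained, but it takes a more elementary route than the paper.  The paper derives $\zh_\zvs$ from the categorical machinery of Theorem \ref{thm:NSmod}: it first exhibits $(\widehat{\Ivs},u,\zt)$ as a symmetric monoidal functor, then invokes the general fact that a monoidal functor induces a closed functor, and the adjunction \eqref{etadef} hands over the formula \eqref{eta} along with its well-definedness and invertibility; the twisted composition law \eqref{etacomp} is then left as ``a direct computation.''  You instead take the formula $\zh_\zvs(f)(m)=\zvs(\degr f,\degr m)f(m)$ as given by \eqref{eta} and verify everything by hand: the $\zls$-twisted right $\cuA$-linearity of $\zh_\zvs(f)$, the $\cA^0$-linearity and bijectivity of $\zh_\zvs$, and the composition identity.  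Your calculations are accurate --- in particular the scalar identity you reduce (i) to, $\zvs(\degr f,\degr a)\,\zl(\degr f,\degr a)=\zls(\degr f,\degr a)\,\zvs(\degr a,\degr f)$, is exactly the definition of $\zls$ (it holds for any multiplier, not just NS-multipliers, which is a minor mislabel in your write-up but not an error), and the composition step uses biadditivity correctly in both slots.  What the paper's route buys is the monoidal and closed functor structure itself (used later, e.g.\ for the graded trace and determinant); what yours buys is transparency about exactly where the twist $\zvs$ absorbs the $\zl$-versus-$\zls$ discrepancy, which the paper's one-line ``direct computation'' leaves implicit.
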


In \textbf{Section 3}, we report on free graded modules and graded matrices, using \cite{NOy04} and \cite{COP} as references.
In particular, the $\cA$-module of $m\times n$ matrices 
receives a $\zG$-grading,
induced by degrees $\boldsymbol{\zm}\in\zG^m$
and $\boldsymbol{\zn}\in\zG^n$ associated
respectively to rows and columns of the matrices.
This module is denoted by $\Mat(\boldsymbol{\zm}\times\boldsymbol{\zn};\cA)$ or simply $\Mat(\boldsymbol{\zn};\cA)$ if
$\boldsymbol{\mu}=\boldsymbol{\nu}$. If a free graded $\cA$-module  $M$ admits a basis of homogeneous elements $(e_i)$ of degrees $(\zn_{i})=\boldsymbol{\zn}$, then we have an isomorphism of $\zG$-algebras
$$ \iEnd_{\cA}(M) \simeq \Mat(\boldsymbol{\zn};\cA)\;.$$
The statement of the previous section then rewrites in matrix form, the map $\eta_\zvs$ becoming then an $\cA^0$-modules isomorphism
$$
J_{\zvs}: \,  \Mat(\boldsymbol{\zn};\cA) \raa  \Mat(\boldsymbol{\zn};\cuA).
$$
To complete our understanding of matrix algebras, we determine conditions on degrees $\boldsymbol{\zn}, \boldsymbol{\zm}\in \zG^{n}$ such that $\Mat(\boldsymbol{\zn};\cA) \simeq \Mat(\boldsymbol{\zm};\cA)$.
This is equivalent to find bases of degrees $\boldsymbol{\zn}$ and $\boldsymbol{\zm}$ in the same free graded $\cA$-module.
If $\cA^{0}$ is a local ring, we prove that two such bases exist if and only if $\boldsymbol{\zn}\in (\zG_{\cA^\ts})^n+ \boldsymbol{\zm}$ (up to permutation of components).  Here, $\zG_{\cA^\ts}$ denotes the set of degrees $\zg\in\zG$ such that $\cA^\zg$ contains at least one invertible element. This result is closely related to a classical Theorem of Dade \cite{Dad80} (cf.\ Proposition \ref{DadeProp}).

In \textbf{Section 4}, we study the \emph{graded trace}. By definition, this is a degree-preserving $\cA$-linear map  $\Tr: \iEnd_{\cA}(M) \to \cA$, which is also a $(\zG,\zl)$-Lie algebra morphism. The latter property means that
$$
\Tr(f\circ g)-\zl(\tilde{f},\tilde{g})\Tr(g\circ f)=0
$$
for any pair of homogeneous endomorphisms $f,g$ of respective degrees $\tilde{f},\tilde{g}\in \zG$. Using the map $\eta_\zvs$ of Theorem \ref{Thm:A} and the supertrace $\str:\iEnd_{\cA}(M)\rightarrow \cuA$, we construct a graded trace and show it is
essentially unique.
\begin{thmintro}\label{Thm:C}
Let $\cA$ be a $(\zG,\zl)$-commutative algebra, $M$ be a free graded $\cA$-module and $\zvs\in\fS(\zl)$. Up to multiplication by a
 scalar in $\cA^0$, there exists a unique graded trace $\Tr: \iEnd_{\cA}(M) \to \cA$. One is given by the  map $\str \circ \zh_{\zvs}$,
 which does not depend on $\zvs\in\fS(\zl)$.
\end{thmintro}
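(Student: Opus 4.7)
The plan is to take $\Tr:=\str\circ\zh_\zvs$ as the candidate graded trace, where $\str$ is the supertrace on $\iEnd_{\cuA}(\widehat{I}_\zvs(M))$, then verify the two defining properties and establish essential uniqueness. Degree preservation is immediate from Theorem~\ref{Thm:A}, and $\cA$-linearity follows from the $\cuA$-linearity of $\str$ combined with the explicit form of $\zh_\zvs$: the $\zvs$-twist picked up by $\zh_\zvs$ under left multiplication by $a\in\cA$ exactly cancels the $\zvs$-twist converting $\cuA$-multiplication into $\cA$-multiplication.

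The core computation is the $(\zG,\zl)$-Lie morphism property. For $f,g\in\iEnd_\cA(M)$ homogeneous, I combine relation \eqref{etacomp} with the supertrace identity $\str(FG)=(-1)^{\zf(\degr F)\zf(\degr G)}\str(GF)=\zls(\degr F,\degr G)\str(GF)$ on the super side, obtaining
\begin{align*}
\Tr(f\circ g)&=\zvs(\degr f,\degr g)^{-1}\str\bigl(\zh_\zvs(f)\circ\zh_\zvs(g)\bigr)\\
&=\zvs(\degr f,\degr g)^{-1}\zls(\degr f,\degr g)\str\bigl(\zh_\zvs(g)\circ\zh_\zvs(f)\bigr)\\
&=\zvs(\degr f,\degr g)^{-1}\zvs(\degr g,\degr f)\zls(\degr f,\degr g)\,\Tr(g\circ f).
\end{align*}
The NS-multiplier identity recalled in Section~1, namely $\zl(\za,\zb)=\zvs(\za,\zb)^{-1}\zvs(\zb,\za)\zls(\za,\zb)$, makes the trailing scalar collapse to $\zl(\degr f,\degr g)$, and $\Tr$ is a graded trace.

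For uniqueness, I would fix a homogeneous basis of $M$ and work in $\Mat(\boldsymbol{\zn};\cA)\simeq\iEnd_\cA(M)$. Given any graded trace $\Tr'$, the $(\zG,\zl)$-Lie property applied to $E_{ij}(1)\cdot E_{jk}(a)$ for $i\ne k$ forces vanishing on off-diagonal elementary matrices, while applied to $E_{ij}(1)\cdot E_{ji}(a)$ it ties the values on diagonal entries by $\zl$-factors, so $\cA$-linearity determines $\Tr'$ up to a single scalar in $\cA^0$. Independence of $\Tr$ from $\zvs$ then follows from uniqueness by comparing two such constructions on $E_{ii}(1)$, on which the NS-twist contributed by $\zh_\zvs$ is trivial by the normalization of NS-multipliers.

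The main obstacle is the off-diagonal vanishing step in the uniqueness argument when the graded piece $\cA^{\zg+\zn_j-\zn_i}$ contains no invertible element, so that $E_{ij}(1)$ is unavailable. One works around this by inserting an auxiliary index and factoring $E_{ik}(a)=E_{ij}(b)\cdot E_{jk}(c)$ when possible, and otherwise exploits $\cA^0$-linearity to reduce to degrees in which the argument applies. This is the only place where the precise degree support of $\cA$ intervenes, and it is essentially bookkeeping.
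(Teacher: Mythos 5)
Your existence verification is a direct computation that amounts to the same mechanism as the paper's argument (pullback through $\zh_\zvs$ and the supertrace), and the three-line calculation with \eqref{etacomp} and the NS-multiplier identity $\zl^{\zvs}(\za,\zb)=\zl(\za,\zb)\zvs(\za,\zb)\zvs(\zb,\za)^{-1}$ checks out. Where you genuinely diverge from the paper is in uniqueness. The paper argues abstractly: a graded trace $t$ pulls back to an $\cuA$-linear Lie-superalgebra morphism $t\circ\zh_\zvs^{-1}$, and uniqueness of the supertrace (up to scalar) is invoked as known. You instead reason elementarily with elementary matrices $E_{ij}(a)$, using the $(\zG,\zl)$-Lie relation to kill off-diagonal entries and to tie together the diagonal ones. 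This is a perfectly valid alternative, arguably more self-contained since it does not rely on a citation for the supertrace characterization, at the cost of some explicit matrix bookkeeping.

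However, the obstacle you flag at the end is a phantom. You worry that $E_{ij}(1)$ may be unavailable when $\cA^{\zg+\zn_j-\zn_i}$ has no invertible element. But $1\in\cA^0$ always exists and defines the \emph{homogeneous} matrix $E_{ij}(1)\in\Mat^{\zn_i-\zn_j}(\boldsymbol{\zn};\cA)$; homogeneity of a graded matrix does not require the degree to be $0$. In fact the cleanest version of your argument uses $E_{ii}(1)$ of degree $0$: for $i\neq j$ one has $E_{ii}(1)E_{ij}(a)=E_{ij}(a)$ and $E_{ij}(a)E_{ii}(1)=0$, so the Lie property (with $\zl(0,\cdot)=1$) immediately gives $\Tr'(E_{ij}(a))=0$; while $E_{ij}(1)E_{ji}(1)=E_{ii}(1)$ and $E_{ji}(1)E_{ij}(1)=E_{jj}(1)$ tie the diagonal values together by $\zl$-factors. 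Combined with $\cA$-linearity (the $E_{ij}(1)$ generate $\Mat(\boldsymbol{\zn};\cA)$ as an $\cA$-module via \eqref{A-Mod Matrix}), this pins down $\Tr'$ up to the single scalar $\Tr'(E_{11}(1))\in\cA^0$. No case analysis on the degree support of $\cA$ is needed, and the paragraph about inserting auxiliary indices can simply be dropped. Your closing remark on independence of $\zvs$ is fine, and mirrors the paper's argument via the explicit diagonal formula.
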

In matrix form, the graded trace reads as $\Tr=\str\circ J_\zvs$, and its evaluation on a homogeneous matrix $X=(X^{i}_{\,\;j})_{i,j}\in \Mat^x(\boldsymbol{\zn};\cA)$ gives
$$  \Tr(X) = \sum_i  \zl(\zn_i, x+\zn_i) X^i_{\;\,i} \;. $$
As a result our graded trace on matrices coincides with the one introduced in \cite{Sch83} and is a generalization of those studied in \cite{KNa84,COP,GMW}.

As for the graded determinant and Berezinian, the situation is more involved. For the sake of clarity, we first treat the case of purely-even algebras. These are $(\zG,\zl)$-commutative algebras $\cA$ with no odd elements, i.e., for which $\zG=\zG\evp$. In this case, each NS-functor $\Ivs$ sends $\cA$ to a classical commutative algebra $\cuA:=\Ivs(\cA)$, and we make use of the classical determinant $\det_{\cuA}$ over $\cuA$.

In \textbf{Section 5}, we restrict to the subalgebra of $0$-degree matrices, denoted by $\Mat^0(\boldsymbol{\nu};\cA)$, and to the subgroup of invertible matrices, denoted by $\GL^0(\boldsymbol{\nu};\cA)$.
As already mentioned, the arrow function of the NS-functor $\widehat{I}_{\zvs}$ allows then to pull-back the determinant to the graded side, while keeping its multiplicativity property. Following \cite{McD}, we generalize the multiplicative characterization of the classical determinant to the graded setting.
            \begin{thmintro}\label{thm:Gdet0}
Let $\boldsymbol{\zn}\in\zG^n$, $n\in\N$. There exists a unique family of maps
$$  \Gdet^0_{\cA}: \, \GL^0(\boldsymbol{\zn}; \cA) \raa (\cA^0)^{\ts}\,, $$
parameterized by objects $\cA$ in $\glAlg$,
such that
\begin{enumerate}
    \item[\textsc{ai}.] it defines a natural transformation
    \[\begin{tikzpicture}
    \matrix(m)[matrix of math nodes, column sep=8em, row sep=4em]
{
\glAlg & \mathtt{Grp} \\
};
\draw[->, shorten >=5pt,shorten <=5pt](m-1-1) to[bend left=40] node[label=above:$\scriptstyle\GL^0(\boldsymbol{\zn};   -  ) $] (U) {} (m-1-2);
\draw[->,shorten >=5pt,shorten <=5pt] (m-1-1) to[bend right=40,name=D] node[label=below:$\scriptstyle \lp( - )^0\rp^{\ts}$] (V) {} (m-1-2);
\draw[double,double equal sign distance,-implies,shorten >=5pt,shorten <=5pt]
  (U) -- node[label=right:$\Gdet^0$] {} (V);
    \end{tikzpicture}\]
    \item[\textsc{aii}.]  for any invertible $a\in\cA^0$,
    $$ \Gdet^0_{\cA}\lp \begin{array}{cccc} 1&&&\\&\ddots&&\\&&1&\\&&&a \end{array}\rp = a \;.$$
\end{enumerate}
The natural transformation $\Gdet^0$ is called the \emph{graded determinant} and satisfies
\be\label{def:gdet0}
 \Gdet^{0}_{\cA}(X)=\det\nolimits_{\cuA}\lp J_{\zvs}(X)\rp \,,
\ee
for all $X\in \GL^0(\boldsymbol{\zn};\cA)$ and all $\zvs\in\f{S}(\zl)$.
            \end{thmintro}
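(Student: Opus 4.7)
The plan is to define $\Gdet^{0}_{\cA}$ via formula~\eqref{def:gdet0}, verify that it satisfies (\textsc{ai})--(\textsc{aii}), and then deduce uniqueness by pulling the characterization back through $J_{\zvs}$ to the classical commutative setting.

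\textbf{Existence.} Because $\cA$ is purely-even, $\cuA=\Ivs(\cA)$ is a classical commutative algebra, and on degree-zero elements the composition rule \eqref{etacomp} specializes to genuine multiplicativity of $\zh_{\zvs}$ (the scalar $\zvs(0,0)=1$). Thus $J_{\zvs}$ restricts to a unital $\cA^{0}$-algebra isomorphism $\Mat^{0}(\boldsymbol{\zn};\cA)\longrightarrow\Mat^{0}(\boldsymbol{\zn};\cuA)$, and, on units, to a group isomorphism. Setting $\Gdet^{0}_{\cA}(X):=\det_{\cuA}(J_{\zvs}(X))$, three checks remain. \emph{(i)} The determinant lies in $\cA^{0}$: expanding $\det_{\cuA}(J_{\zvs}(X))=\sum_{\zp}\mathrm{sgn}(\zp)\prod_{i}J_{\zvs}(X)^{\zp(i)}_{\;\,i}$, each summand has total degree $\sum_{i}(\zn_{i}-\zn_{\zp(i)})=0$. \emph{(ii)} Axiom (\textsc{ai}) follows from naturality of $I_{\zvs}$ (and hence of $J_{\zvs}$) along morphisms of $\glAlg$ combined with naturality of the classical determinant. \emph{(iii)} Axiom (\textsc{aii}) is checked by tracking a homogeneous basis: $J_{\zvs}$ acts on matrix entries by a scalar rescaling that, together with the identity $\zl(\zn,\zn)=1$ valid in the purely-even case, leaves the diagonal matrix $\mathrm{diag}(1,\dots,1,a)$ with $a\in(\cA^{0})^{\ts}$ unchanged under $J_{\zvs}$, whose classical determinant is $a$.

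\textbf{Uniqueness.} Let $D$ be any family satisfying (\textsc{ai}) and (\textsc{aii}). Via the NS-equivalence and the isomorphism $J_{\zvs}$, $D$ transports to a natural transformation $\widetilde{D}:\GL^{0}(\boldsymbol{\zn};-)\Rightarrow((-)^{0})^{\ts}$ defined on $\zG$-graded commutative algebras and still normalized on diagonal matrices of the form $\mathrm{diag}(1,\dots,1,a)$. Following \cite{McD}, I would then argue $\widetilde{D}=\det_{\cuA}$ by a generic-matrix argument: in the free $\zG$-graded commutative algebra $\cR$ on indeterminate entries of degrees $\zn_{j}-\zn_{i}$ adjoined with the inverse of the universal determinant, factor a generic invertible matrix as a product of homogeneous elementary matrices and a normalized diagonal factor; naturality along suitable endomorphisms of $\cR$ (or the commutator realization) forces $\widetilde{D}=1$ on each elementary matrix, (\textsc{aii}) plus multiplicativity pins down the diagonal factor, and naturality propagates the resulting equality $\widetilde{D}=\det_{\cuA}$ to every object of $\glAlg$. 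Transferring back through $J_{\zvs}^{-1}$ yields $D=\Gdet^{0}$. Independence of $\zvs\in\f{S}(\zl)$ is now automatic: since axioms (\textsc{ai})--(\textsc{aii}) make no reference to $\zvs$, uniqueness forces every choice in \eqref{def:gdet0} to produce the same map.

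The main obstacle is the uniqueness step. Classical McDonald-type characterizations apply to $\GL_{n}$ over all commutative rings, whereas here one must restrict to the subgroup $\GL^{0}(\boldsymbol{\zn};-)$ of matrices homogeneous of degree $0$ and to naturality along graded morphisms only. Setting up the universal graded invertible matrix in the appropriate free $\zG$-graded commutative algebra, and carrying out the homogeneous Gaussian elimination compatible with the degree tuple $\boldsymbol{\zn}$, is where the bulk of the technical bookkeeping resides; once this is in place, the factorization-and-naturality argument proceeds exactly as in the classical case.
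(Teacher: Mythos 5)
Your existence argument matches the paper's: define $\Gdet^0_\cA:=\det_{\cuA}\circ J_{\zvs}$, observe that naturality of $J_{\zvs}$ and of the classical determinant yield axiom (\textsc{ai}), and verify (\textsc{aii}) from the explicit formula \eqref{Jsentry} (indeed, for $x=0$ and $i=j$ one has $\zvs(0,\zn_i)\zvs(\zn_i,0)^{-1}=1$, so $J_{\zvs}$ fixes diagonal entries).

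For uniqueness, you announce that you are "following \cite{McD}" but then describe an argument that is \emph{not} McDonald's: you propose to factor the generic invertible graded matrix over the free graded commutative algebra into homogeneous elementary matrices and a normalized diagonal, and to kill the elementary factors via naturality. This faces two real obstacles. First, the universal ring $T(\boldsymbol{\zn})$ (graded polynomials in the $X^i_{\;\,j}$ localized at $\Gdet^0(X)$) is not local, and the generic entries $X^i_{\;\,j}$ are not units, so there is no Gaussian elimination putting the generic matrix into the form you need; proving $\GL^0(\boldsymbol{\zn};T(\boldsymbol{\zn}))$ is generated by elementary and diagonal matrices is a nontrivial $K$-theoretic statement that you do not address. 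Second, showing $\widetilde{D}=1$ on an elementary matrix via "naturality along suitable endomorphisms" requires a separate additive-to-multiplicative argument that is left entirely implicit. The paper's actual proof (and McDonald's original one) sidesteps all of this: $\GL^0(\boldsymbol{\zn};-)$ is represented by $T(\boldsymbol{\zn})$ and $((-)^0)^\ts$ by $T(1)=\K[Y,Y^{-1}]$, so by Yoneda a natural transformation corresponds to a $\zG$-algebra morphism $T(1)\to T(\boldsymbol{\zn})$, determined by the image of $Y$, which must be a unit of $T(\boldsymbol{\zn})^0$ and hence of the form $\pm(\Gdet^0(X))^t$; axiom (\textsc{aii}) then forces the sign to be $+1$ and the exponent $t=1$. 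You should replace your elementary-matrix sketch by this representability/Yoneda argument; the one genuinely new ingredient beyond McDonald is verifying that $\GL^0(\boldsymbol{\zn};-)$ on $\glAlg$ is still corepresentable by a $(\zG,\zl)$-commutative algebra, which is the content of Lemma~\ref{generalMcD}.
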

The axiom \textsc{ai}. implies that $\Gdet^0_{\cA}$ is a group morphism. Equation \eqref{def:gdet0} allows to extend $\Gdet^0$ to all matrices in $\Mat^0(\boldsymbol{\nu};\cA)$, and then $\Gdet^0(X)$ is invertible if and only if $X$ is invertible. Our graded determinant coincides with the ones defined in \cite{KNa84,COP}, and, as an advantage of our construction, we find an explicit formula for $\Gdet^0(X)$. This is the same polynomial expression in the entries of the matrix $X$ that the classical determinant would be, but here the order of the terms in each monomial  is very important: only with respect to a specific order
does the formula retain such a nice form. This situation is analogous to that of Moore's determinant, defined for Hermitian quaternionic matrices  (see e.g.\ \cite{Asl}).
In addition, if $\cA$ admits invertible homogeneous elements of any degree, we provide a transition matrix $P$, such that
$PXP^{-1}$ is a matrix with entries in the commutative algebra $\cA^0$ and $\Gdet^0_\cA(X)=\det_{\cA^0}(PXP^{-1})$, for all $X\in\Mat^0(\boldsymbol{\nu};\cA)$. This recovers a result in \cite{KNa84}.

In \textbf{Section 6}, we investigate the natural extension of the graded determinant $\Gdet^0$ to arbitrary graded matrices, by defining maps in the same way
\bemaps
\Gdet_{\zvs}= \det \circ J_{\zvs}:& \Mat(\boldsymbol{\zn};\cA) &\raa& \cA\;,
\eemaps
where $\zvs\in\fS(\zl)$. These functions share determinant-like properties and can be characterized as follows.

\begin{thmintro}\label{Thm:E}
Let $\f{s}:\,\zG\ts\zG\to \K^{\ts}$. If $\f{s}\in\fS(\zl)$, then $\Gdet_{\f{s}}$ satisfies properties \ref{GdetGdet0}\!--\ref{Axiom3} below.
Conversely, if there exists, for all $\boldsymbol{\zn}\in \bigcup_{n\in\N^*}\zG^{n} $,  a natural transformation
\be \label{Nat-Det}
\begin{tikzpicture}
    \matrix(m)[matrix of math nodes, column sep=8em, row sep=4em]
{
\glAlg & \cat{Set}\\
};
\draw[->, shorten >=5pt,shorten <=5pt](m-1-1) to[bend left=40] node[label=above:$\scriptstyle\Mat(\boldsymbol{\zn};   -  ) $] (U) {} (m-1-2);
\draw[->,shorten >=5pt,shorten <=5pt] (m-1-1) to[bend right=40,name=D] node[label=below:$\scriptstyle {\rm forget} $] (V) {} (m-1-2);
\draw[double,double equal sign distance,-implies,shorten >=5pt,shorten <=5pt]
  (U) -- node[label=right:$\zD_{\f{s}}$] {} (V);
    \end{tikzpicture}
\ee
satisfying properties \ref{GdetGdet0}\!--\ref{Axiom3}, then $\f{s}\in\fS(\zl)$ and $\zD_{\f{s}}=\Gdet_{\f{s}}$. The properties are
\begin{enumerate}[label=\textsc{\roman *.}]
    \item \label{GdetGdet0} extension of $\Gdet^0$, i.e.,  $\zD_{\f{s}}(X)=\Gdet^{0}(X)$  for all $X\in\GL^{0}(\boldsymbol{\zn};\cA)$;
    \item \label{Axiom1b}
weak multiplicativity, i.e.,
$
\zD_{\f{s}}(X Y)=\zD_{\f{s}}(X) \cdot \zD_{\f{s}}(Y)
$
for all $X,Y\in\Mat(\boldsymbol{\zn};\cA)$ such that either $X$ or $Y$ is homogeneous of degree $0$;
    \item\label{Axiom2}
 additivity in the rows, i.e.,
 $\zD_{\f{s}}(Z)= \zD_{\f{s}}(X)+\zD_{\f{s}}(Y)$ for all graded matrices
 $$
X=  \left[ \begin{array}{c} \mathtt{x}^1 \\ \vdots \\  \mathtt{x}^n \end{array}\right]\;,\;
Y=\left[ \begin{array}{c} \mathtt{y}^1 \\ \vdots \\  \mathtt{y}^n \end{array}\right]
\;
,
\;
Z=         \left[ \begin{array}{c}
         \mathtt{z}^1 \\  \vdots \\ \mathtt{z}^n \end{array}\right]\;\;\in\Mat(\boldsymbol{\zn};\cA)
$$
whose rows satisfy, for a fixed index  $1\leq k\leq n$,
 \bea \label{XYZ}
  \mathtt{z}^i=  \mathtt{x}^i =  \mathtt{y}^i\,,\mbox{ if } i\neq k
 & \mbox{and} &
   \mathtt{z}^k=  \mathtt{x}^k+ \mathtt{y}^k
\;; \eea
	\item \label{Axiom3} heredity for diagonal matrices, i.e.,
for all $c\in\cA^{\degr{c}}$, 
\be\label{GdetDiag}
\zD_{\f{s}}
\lp \begin{array}{c|c}
D & \\
\hline
&\zl(\degr{c},\zn_{n})\, c
\end{array}\rp
=
\f{s}\Big(\degr{c}\,,\, \op{deg}\lp\zD_{\f{s}} \lp D \rp\rp \Big)\, c\cdot \zD_{\f{s}} \lp D \rp\,,
\ee
where $D\in\Mat(\boldsymbol{\zn'};\cA)$, with $\boldsymbol{\zn'}=(\zn_{1},\ldots , \zn_{n-1})$, is a diagonal matrix with homogeneous entries and $\op{deg}\lp\zD_{\f{s}} \lp D \rp\rp$ is the degree\footnote{By induction on the rank, $\zD_{\f{s}} \lp D \rp$ is homogeneous.}
of $\zD_{\f{s}} \lp D \rp$.
 \end{enumerate}
\end{thmintro}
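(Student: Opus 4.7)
Given $\f{s}\in\fS(\zl)$, define $\Gdet_{\f{s}}:=\det\circ J_{\f{s}}$ and verify properties \ref{GdetGdet0}--\ref{Axiom3} in turn, using Theorem \ref{Thm:A} together with classical properties of $\det$. Property \ref{GdetGdet0} is precisely equation \eqref{def:gdet0} of Theorem \ref{thm:Gdet0}. For \ref{Axiom1b}, the scalar $\f{s}(\degr{X},\degr{Y})$ in \eqref{etacomp} trivializes whenever either $X$ or $Y$ is of degree $0$, by biadditivity of $\f{s}$; hence $J_{\f{s}}(XY)=J_{\f{s}}(X)\,J_{\f{s}}(Y)$, and classical multiplicativity of $\det$ concludes. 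Property \ref{Axiom2} follows since $J_{\f{s}}$ acts by entry-wise rescaling with scalars depending only on the row and column degrees, hence preserves the decomposition \eqref{XYZ}; the claim is then classical multilinearity of $\det$ in rows. Finally \ref{Axiom3} is a direct computation: $J_{\f{s}}$ preserves block-diagonal form so that the classical determinant factorizes as a product over blocks, and the specific scalar $\f{s}(\degr{c},\mathrm{deg}(\Gdet_{\f{s}}(D)))$ arises as the twist $J_{\f{s}}$ applies to the $(n,n)$-entry $\zl(\degr{c},\zn_n)\,c$.

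\textbf{Converse direction.} Conversely, assume that for every $\boldsymbol{\zn}$ a natural transformation $\zD_{\f{s}}$ satisfying \ref{GdetGdet0}--\ref{Axiom3} exists. The aim is to prove that these axioms determine $\zD_{\f{s}}$ uniquely on every graded matrix, and that their consistency forces $\f{s}\in\fS(\zl)$. By iterated use of \ref{Axiom2}, $\zD_{\f{s}}(X)$ expands, for any $X$, as a Leibniz-type sum over ``monomial'' matrices having exactly one chosen nonzero entry per row. Monomial matrices whose column support is not bijective vanish by a standard row-dependence argument combining \ref{Axiom2} with \ref{Axiom1b} and \ref{GdetGdet0}; monomial matrices supported on a permutation $\zs$ are reduced to diagonal matrices using \ref{Axiom1b} together with the classical value of $\Gdet^{0}$ on degree-$0$ permutation matrices, available through \ref{GdetGdet0}. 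Diagonal matrices are then handled inductively via \ref{Axiom3}, the base $1\times 1$ case being fixed by \ref{GdetGdet0} and by naturality of $\zD_{\f{s}}$.

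The self-consistency of this reduction scheme imposes two constraints on $\f{s}$. First, nesting \ref{Axiom3} in two different associative patterns on a three-block diagonal matrix forces biadditivity of $\f{s}$ in both arguments. Second, swapping two consecutive diagonal entries by conjugation with a degree-$0$ transposition matrix introduces, through \ref{Axiom1b} and \ref{GdetGdet0}, a factor governed by the commutation factor $\zl$; reading the reordered diagonal directly through \ref{Axiom3} introduces, instead, a factor governed by $\f{s}$. Identifying the two expressions yields a relation equivalent to the defining condition of $\fS(\zl)$, so that $\f{s}\in\fS(\zl)$. The explicit formula for $\zD_{\f{s}}$ produced by the reduction then visibly coincides with $\det\circ J_{\f{s}}=\Gdet_{\f{s}}$, giving $\zD_{\f{s}}=\Gdet_{\f{s}}$.

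\textbf{Main obstacle.} I expect the most delicate step to be the second consistency check in the converse: tracking the precise scalar contributions on each side of a block swap so that the resulting identity recovers \emph{exactly} the defining relation of $\fS(\zl)$, including the sign governed by the parity $\zf$. Everything else reduces to careful multilinear bookkeeping on top of the reduction scheme and the functorial properties of $J_{\f{s}}$ recorded in Theorem \ref{Thm:A}.
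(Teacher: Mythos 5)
Your forward direction is essentially the paper's Step~1, and the broad outline of the uniqueness reduction in the converse (row decomposition $\to$ monomial matrices $\to$ permutation times diagonal $\to$ induction via \ref{Axiom3}) matches the paper's Step~3. However, there are two substantive gaps in the converse that you do not resolve.

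First, the permutation step requires permutation matrices that are \emph{homogeneous of degree $0$} in $\Mat(\boldsymbol{\zn};\cA)$. For a generic $(\zG,\zl)$-commutative algebra $\cA$ and generic $\boldsymbol{\zn}\in\zG^n$, the naive permutation matrix $(\zd^i_{\zs(j)})$ is \emph{not} degree $0$ (it lands in $\Mat^0(\boldsymbol{\zm}\ts\boldsymbol{\zn};\cA)$ with $\zm_i=\zn_{\zs(i)}$), so you cannot use \ref{Axiom1b} with it. The paper circumvents this by adjoining invertible elements of every degree: it passes to $\cB = \cA\ots_{\K}\K[S,S^{-1}]_{\zl}$, uses the rescaled permutation matrices $P(\zs)=\big(\zd^i_{\zs(j)}\,\f{t}_{\zn_i}^{-1}\f{t}_{\zn_j}\big)\in\GL^{0}(\boldsymbol{\zn};\cB)$ of Lemma~\ref{permutationMatrx}, and then descends to $\cA$ via the naturality of $\zD_{\f{s}}$ along the injective embedding $\zi:\cA\hookrightarrow\cB$. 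This auxiliary-algebra step, and the descent that it necessitates, are missing from your argument; without them the reduction to diagonals is not available in general.

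Second, your scheme for forcing $\f{s}\in\fS(\zl)$ is not workable as described. You claim that ``nesting \ref{Axiom3} in two different associative patterns on a three-block diagonal matrix forces biadditivity'': but \ref{Axiom3} admits exactly one recursive pattern — it always strips the $(n,n)$-entry — so there is no ambiguity to exploit and no constraint extracted this way. Likewise, the swap of two diagonal entries that you describe only yields the relation $\zl(x,y)\f{s}(x,y)\f{s}(y,x)^{-1}=1$, which is one of the two conditions characterizing $\fS(\zl)$ in the even case, not the whole of it. The paper's argument derives exactly two identities: the $2\times 2$ conjugation by $P((1\,2))$ gives $\zl(x,y)\f{s}(x,y)=\f{s}(y,x)$, while conjugating a $3\times 3$ diagonal by $P((1\,2\,3))$ and comparing both evaluations through \ref{Axiom3} (using the first identity to reorder the resulting products) gives the cocycle condition $\f{s}(x+y,z)\f{s}(x,y)=\f{s}(x,y+z)\f{s}(y,z)$. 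Together these two relations say that $\f{s}$ is a multiplier with trivial twisted commutation factor on the purely even grading group, which is precisely $\f{s}\in\fS(\zl)$ here. You need both arguments, and both rely on the auxiliary algebra $\cB$ because the $P(\zs)$ live there.
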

Despite the above characterization, the determinant-like functions $\Gdet_\zvs$ are not proper determinants: they are not multiplicative and do not characterize invertibility of matrices in general. They depend both on the choices of NS-multiplier $\zvs\in\fS(\zl)$
and of grading $\boldsymbol{\nu}$ on matrices. We illustrate these drawbacks on $2\times 2$ quaternionic matrices.
Note that any multiplicative determinant satisfies properties \ref{GdetGdet0}-\ref{Axiom1b},
and property \ref{Axiom3} is rather natural and satisfied, e.g., by Dieudonn\'e determinant.
Hence, Theorem \ref{Thm:E} puts severe restriction on the existence of multiplicative and multiadditive determinants.
On quaternionic matrices, as is well-known, no such determinant exists \cite{Dys,Asl}.
Nevertheless, $\Gdet_\zvs$ define proper determinants  once restricted to homogeneous  matrices.
In particular,  on homogeneous quaternionic matrices, $\Gdet_\zvs$ provide lifts of the Dieudonn\'e determinant to $\mathbb{H}$.
Note that restriction to homogeneous elements is also needed for the recent extension of Dieudonn\'e determinant to graded division algebras \cite{HWa12}.

Finally, in \textbf{Section 7}, we deal with the general case of a $(\zG,\zl)$-commutative algebra $\cA$ with odd elements. In this case, each NS-functor $\Ivs$ sends $\cA$ to a supercommutative algebra $\cuA:=\Ivs(\cA)$, and we use the classical Berezinian $\ber$ over $\cuA$ to define the \emph{graded Berezinian} as $\Gber_{\zvs}:=\ber\circ J_{\zvs}$, with $\zvs\in\fS(\zl)$. We get the following results.
\begin{thmintro}\label{Thm:F}
Let $r\evp\,,r\odp\in\N$ and $\boldsymbol{\zn}=(\boldsymbol{\zn}\evp\,, \boldsymbol{\zn}\odp)\in \zG\evp^{r\evp} \ts \zG^{r\odp}\odp$.
\begin{enumerate}
    \item The maps
$\Gber_{\zvs}:\, \GL^0(\boldsymbol{\zn}; \cA) \to  (\cA)^{\ts}$
do not depend on the choice of $\zvs\in\f{S}(\zl)$ and define a group morphism, denoted by $\Gber^0$.
\item The maps $\Gber_\zvs$ are well-defined on matrices $X\in \GL^x(\boldsymbol{\zn};\cA)$, $x\in\zG\evp$, and given by
    \begin{equation}\label{Gbers}
\Gber_{\zvs}(X)=\zvs(x,x)^{-r\odp\,(r\evp\,-r\odp\,)}
\, \Gdet_{\zvs}(\cX_{00}- \cX_{01}\cX_{11}^{-1} \cX_{10} )\cdot \Gdet_{\zvs}(\cX_{11})^{-1}\;,
\end{equation}
where $\lp \begin{array}{cc} \cX_{00} & \cX_{01}\\ \cX_{10}& \cX_{11} \end{array} \rp =X$ is the block-decomposition of $X$ according to parity.
\end{enumerate}
\end{thmintro}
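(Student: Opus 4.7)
The plan is to first establish the block-matrix formula \eqref{Gbers} of part~(2) by reducing $\Gber_\zvs$ to the classical super-Berezinian through $J_\zvs$, and then deduce part~(1) by specializing to $x=0$.

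For $X\in\GL^x(\boldsymbol{\zn};\cA)$ with $x\in\zG\evp$, the matrix $Y:=J_\zvs(X)\in\Mat(\boldsymbol{\zn};\cuA)$ is of $\zG$-degree $x$, hence of underlying $\Z_2$-degree $\zf(x)=\bar 0$. Since the rows/columns indexed by $\boldsymbol{\zn}\evp$ (resp.\ $\boldsymbol{\zn}\odp$) have super-degree $\bar 0$ (resp.\ $\bar 1$), $Y$ is an invertible even super-matrix, on which the standard super-Berezinian applies:
\[
\ber(Y) \;=\; \det(Y_{00}-Y_{01}Y_{11}^{-1}Y_{10})\cdot\det(Y_{11})^{-1},
\]
where product, inverse and determinants are computed in the commutative ring $\cuA^{\bar 0}$. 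Since $J_\zvs$ acts entry-wise by scalars depending only on the row/column degrees and the matrix degree, one has $Y_{ab}=J_\zvs(\cX_{ab})$ for every block, each $\cX_{ab}$ being viewed as a homogeneous graded matrix of degree $x$. This already shows that $\ber(J_\zvs(X))$ is well-defined, giving the existence half of part~(2).

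The next step is to recognise the pieces of $\ber(Y)$ as graded determinants. From $J_\zvs(I)=I$ combined with \eqref{etacomp}, one obtains $J_\zvs(\cX_{11})^{-1}=\zvs(x,x)\,J_\zvs(\cX_{11}^{-1})$ in $\Mat(\boldsymbol{\zn}\odp;\cuA)$. Iterating \eqref{etacomp} on the triple product $Y_{01}Y_{11}^{-1}Y_{10}$, the intermediate $\zvs$-factors cancel by biadditivity, yielding $Y_{00}-Y_{01}Y_{11}^{-1}Y_{10}=J_\zvs(\cX_{00}-\cX_{01}\cX_{11}^{-1}\cX_{10})$. The two super-determinants in the block formula then coincide respectively with $\Gdet_\zvs(\cX_{00}-\cX_{01}\cX_{11}^{-1}\cX_{10})$ and $\Gdet_\zvs(\cX_{11})$, but still assembled through $\cuA^{\bar 0}$-operations. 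The prefactor $\zvs(x,x)^{-r\odp(r\evp-r\odp)}$ arises when translating these into the $\cA$-operations in which \eqref{Gbers} is stated: first, the $\cuA$-inverse of $\Gdet_\zvs(\cX_{11})$ (of $\zG$-degree $xr\odp$) differs from its $\cA$-inverse by $\zvs(xr\odp,xr\odp)=\zvs(x,x)^{r\odp^{2}}$; second, the $\cuA$-product of $\Gdet_\zvs(\cX_{00}-\cX_{01}\cX_{11}^{-1}\cX_{10})$ (of degree $xr\evp$) with the $\cA$-inverse of $\Gdet_\zvs(\cX_{11})$ (of degree $-xr\odp$) differs from their $\cA$-product by $\zvs(xr\evp,-xr\odp)=\zvs(x,x)^{-r\evp r\odp}$. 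Combining these two contributions gives $\zvs(x,x)^{r\odp^{2}-r\evp r\odp}=\zvs(x,x)^{-r\odp(r\evp-r\odp)}$, proving \eqref{Gbers}.

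For part~(1), specialize \eqref{Gbers} to $x=0$: the prefactor reduces to $1$, and the matrices $\cX_{00}-\cX_{01}\cX_{11}^{-1}\cX_{10}$ and $\cX_{11}$ become matrices of degree $0$. Theorem~D ensures that $\Gdet_\zvs$ on degree-$0$ matrices agrees with the $\zvs$-independent morphism $\Gdet^{0}$, so $\Gber_\zvs|_{\GL^{0}(\boldsymbol{\zn};\cA)}$ does not depend on $\zvs\in\fS(\zl)$ and defines $\Gber^{0}$. Its multiplicativity follows from \eqref{etacomp} applied with $\tilde f=\tilde g=0$ (which gives $J_\zvs(XY)=J_\zvs(X)\,J_\zvs(Y)$ on the super side) together with multiplicativity of the classical super-Berezinian on $\GL^{0}(\boldsymbol{\zn};\cuA)$. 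The main obstacle is the exponent count above: the clean value $r\odp(r\evp-r\odp)$ emerges only after carefully separating the effects of $\cuA$-inversion and $\cuA$-multiplication, each individually expressible via biadditivity of $\zvs$ in the form $\zvs(n\za,m\zb)=\zvs(\za,\zb)^{nm}$.
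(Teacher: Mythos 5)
Your proof is correct and follows essentially the same route as the paper. The only cosmetic difference is that you apply the Schur-complement form of the super-Berezinian directly to $J_{\zvs}(X)$ and verify that $J_{\zvs}$ intertwines the Schur complements, whereas the paper instead performs the UDL block decomposition of $X$ on the graded side and transports it through $J_{\zvs}$; either way the prefactor $\zvs(x,x)^{-r\odp\,(r\evp\,-r\odp\,)}$ emerges from the same biadditivity bookkeeping of the $\star$-product and $\star$-inverse.
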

Note that $\zvs(0,0)=1$. The graded Berezinian $\Gber^0$ coincides with the ones introduced formerly, and independently, in \cite{KNa84} and \cite{COP,Cov}. For a characterization of $\Gber^0$, we refer to \cite{COP}.
The formula $\Gber_\zvs=\ber\circ J_\zvs$, defining the graded Berezinian, cannot be extended further to inhomogeneous matrices.
Indeed, the map $J_{\zvs}$  does not preserves invertibility of inhomogeneous matrices in general and the Berezinian is only defined over even invertible matrices.

For the sake of self-consistency of the paper, we present some basic notions of category theory
in an appendix.

\medskip
\textbf{Acknowledgements}.
We are grateful to Dimitry Leites who made us aware of Nekludova's work, which initiated this work, and for his comments.
We are pleased to thank Valentin Ovsienko  for enlightening discussions
and valuable comments.
We also thank Norbert Poncin for his support.

\medskip

\medskip
\textbf{Notation}.
In the whole paper, $\K$ is a field of characteristic zero (e.g. $\R$ or $\C$) and $\K^{\ts}$ denotes its group of invertible elements.
The grading group is a finitely generated abelian group, denoted by $(\zG$,+), with neutral element $0$.

\medskip


\section{Categories of Graded Commutative Algebras} \label{Sec1}

In this section, we review the basic notions of graded-commutative algebra and present the Nekludova-Scheunert Theorem, following \cite{Sch,SoS}. For general notions on graded rings and algebras, we refer to \cite{Bourbaki, Dad80, NOy04}.

\subsection{Definitions and examples}\label{DefAlg}
A \emph{$\zG$-algebra} is an  associative unital algebra $\cA$ over $\K$, with the structure of a  $\zG$-graded vector space $\cA=\opl_{\za \in \zG} \cA^{\za}$, in which the multiplication respects the grading,
\be\label{zG-algebra}
\cA^{\za}\cdot \cA^{\zb} \subset \cA^{\za+\zb}\;, \quad \forall \za,\zb\in\zG\, .
\ee
If $\cA^{\za}\cdot \cA^{\zb} = \cA^{\za+\zb}$ for all $\za,\zb\in\zG$, we say that $\cA$ is \emph{strongly graded}.
The elements in $\cA^\zg$ are called \emph{homogeneous elements} of degree $\zg$.
The $\zG$-algebra $\cA$ is called a \emph{crossed product} if there exists invertible elements of each degree $\zg\in\zG$ (see Remark \ref{crossproduct} for an alternative, more usual, definition). It is a \emph{graded division algebra} if all non-zero homogeneous elements are invertible.
Clearly, if a $\zG$-algebra is a crossed product, then it is strongly graded.
The converse holds if $\cA^0$ is a  local ring \cite{Dad80}.
\emph{Morphisms of $\zG$-algebras} are morphisms of algebras $f:\, \cA \to \cB$ that preserve the degree, $f(\cA^{\zg}) \subset \cB^{\zg}$, for all $\zg\in \zG$.
The kernel of such morphism is an \emph{homogeneous ideal} of $\cA$, i.e., an ideal $I$ such that
$$ I=\bigoplus_{\zg\in\zG} \lp I\cap\cA ^{\zg} \rp \;.$$
The quotient of a $\zG$-algebra by a homogeneous ideal is again a $\zG$-algebra.
The class of $\zG$-algebras (over a fixed field $\K$) and corresponding morphisms form the category $\gAlg$.

Let $\zl$ be a map $\zl: \zG \ts \zG \to \K^{\ts}$. A \emph{$(\zG,\zl)$-commutative algebra} is a $\zG$-algebra in which the multiplication is $\zl$-commutative, namely
\be \label{grcomm} a\cdot b =\zld{a}{b} \, b\cdot a\;, \ee
for all homogeneous elements $a,b\in \cA$ of degrees $\degr{a},\degr{b}\in \zG$.
\emph{Morphisms of $(\zG,\zl)$-commutative algebras} are morphisms of $\zG$-algebras.
The class of $(\zG,\zl)$-commutative algebras (over a fixed field $\K$) and corresponding morphisms form a full subcategory $\glAlg$ of $\gAlg$.
\begin{exa}\label{ex:crossprod}{\rm
If $\cA^0$ is a unital associative and commutative algebra, then the group algebra $\cA^0[\zG]$ is  a $(\zG,\mathbf{1})$-commutative algebra, with $\mathbf{1}:\zG\times\zG\rightarrow\mathbb{K}$ the constant map equal to $1\in\mathbb{K}$. This is a crossed product in general and a graded division algebra if and only if $\cA^0$ is a field.}
\end{exa}

\begin{exa}{\rm \label{exasuper}
\emph{Supercommutative algebras} are $(\Z_{2}, \zlsup)$-commutative algebras, with
$$
\zlsup(x,y)=(-1)^{xy}\;,
$$
for all $ x,y\in \Z_{2}$. The category $(\Z_{2}, \zlsup)$-$\cat{Alg}$ is denoted for short by $\cat{SAlg}$.
}\end{exa}

\begin{exa}{\rm \label{exaWeil}
Let $n\in\N$. The local commutative algebra
$$\EnsQuot{\R[\ze_1,\ldots,\ze_n]}{(\ze_1^2,\ldots,\ze_n^2)}\;,$$
which generalizes dual numbers, receives a $(\Z_2)^n$-grading by setting $\tilde{\ze}_1=(1,0,\ldots,0),\; \ldots\;,\tilde{\ze}_n=(0,\ldots,0,1)$. This is a $((\Z_2)^n,\zl)$-commutative algebra with commutation factor
\be\label{lambda} \zl(x,y):=\ks{x}{y}\; ,\ee
where $\la-,-\ra$ denotes the standard scalar product of binary $n$-vectors. This algebra is  not strongly graded if $n\geq 1$.
 }\end{exa}

\begin{exa}{\rm \label{Clifford1}
Let $p,q\in\N$ and $n=p+q$. The real {\it Clifford algebra} $\op{Cl}(p,q)$ is the real algebra with $n$ generators $(e_i)_{i=1,\ldots,n}$, satisfying the relations $e_ie_j+e_je_i=\pm 2\delta_{ij}$, with plus for $i\leq p$ and minus otherwise. Setting $\tilde{e}_1=(1,0,\ldots,0,1),\; \ldots\;,\tilde{e}_n=(0,\ldots,0,1,1)$, it turns into a $\left((\Z_2)^{n+1},\zl\right)$-commutative algebra with commutation factor of the form \eqref{lambda}.
 For such a grading, the real Clifford algebras are  graded division algebras.
 }\end{exa}

\begin{exa}{\rm 
The algebra of $n\ts n$ matrices $\Mat(n;\C)$ 
 turns into a graded division algebra with respect to the grading group $\zG=\Z_n\ts\Z_n$ and is graded commutative for a commutation factor $\zl: \zG\ts\zG \to \mathbb{U}_n$, with $\mathbb{U}_n$ the group of $n$-th roots of unity (see \cite{Bah}).
}\end{exa}

\subsection{Basic properties}\label{BasicProp}

Let $\cA$ be a $(\zG,\zl)$-commutative algebra. Clearly, the unit of $\cA$, denoted by $1_{\cA}$ or simply $1$, is necessarily of degree $0$. The commutation relation \eqref{grcomm}, together with the associativity of $\cA$, implies that the \emph{commutation factor}  $\zl$ on $\zG$ satisfies the three following conditions  (see \cite{Sch}):
\begin{enumerate}[label=(C\arabic *), ref=C\arabic *]
    \item \label{cf1} $\zl(x,y)\zl(y,x)=1\;,$
    \item \label{cf2} $\zl(x+y,z)=\zl(x,z)\zl(y,z)\;,$
    \item \label{cf3} $\zl(z,x+y)=\zl(z,x)\zl(z,y)\;,$
\end{enumerate}
for all $x,y,z\in \zG$. Conversely, for any commutation factor $\zl$, there exists a $(\zG,\zl)$-commutative algebra.
An easy consequence of the conditions \eqref{cf2} and \eqref{cf3} is that
  $$ \zl(0,x)=\zl(x,0)=1\;,$$
hence $\cA^0 \subseteq Z(\cA)$, the center of $\cA\;$.
As for the condition \eqref{cf1}, it implies $\lp\zl(x,x)\rp^2=1$, for all $x\in \zG$.  Therefore the commutation factor induces a splitting of the grading group into an ``even'' and an ``odd'' part,
\bea \label{G0G1}
 &\zG=\zG\evp \cup \zG\odp & \\
&\mbox{ with } \;\;\;\zG\evp:=\{x\in \zG\,:\, \zl(x,x)=1 \} \;\; \mbox{ and } \;\; \zG\odp:=\{x\in \zG\,:\, \zl(x,x)=-1 \}\;.& \nonumber
\eea
This is equivalently encoded into a morphism of additive groups
\bemaps
\zvf_{\zl}:& \zG & \to & \Z_{2} \;,
\eemaps
with kernel $\zG\evp\;$. The map $\zvf_{\zl}$ provides a $\Z_2$-grading on every $(\zG,\zl)$-commutative algebra $\cA$, also called \emph{parity} and denoted by $\pari{\zg}:=\zvf_{\zl}(\zg) \in \Z_2\;$. Homogeneous elements of $\cA$ are named \emph{even} or \emph{odd} depending on the parity of their degree. Notice that odd elements of $\cA$ are nilpotent.
Hence, if $\cA$ is strongly graded, we have $\zG\odp=\emptyset$.
The group morphism $\zvf_{\zl}$ induces a \emph{regrading functor}
\bemap \label{regradFun}
\zF_{\zl}:& \glAlg & \to & \Z_{2}\mbox{-}\cat{Alg}\;,
\eemap
equal to the identity on arrows and such that $\zF_{\zl}(\cA)$ is the algebra $\cA$ with $\Z_2$-grading
$$
\begin{array}{ccc}
 \lp \zF_{\zl}(\cA)\rp\evp=\bigoplus_{\zg\in \zG\evp}\cA^{\zg} & \mbox{ and } & \lp \zF_{\zl}(\cA)\rp\odp=\bigoplus_{\zg\in \zG\odp}\cA^{\zg} \;.
\end{array}
$$
The $\Z_2$-graded algebra $\zF_{\zl}(\cA)$ is supercommutative if and only if $\zl$ can be factorized through the parity, as stated below.
\begin{prop}\cite{SoS}\label{regrading}
Let $\zl$ a commutation factor over $\zG$. Then, the two following statements are equivalent:
\begin{enumerate}[label=(\roman *)]
	\item
for all $x,y\in \zG$,  $\zl(x,y)=\zlsup(\zvf_{\zl}(x), \zvf_{\zl}(y))\;;$
	\item
the functor $\zF_{\zl}$ takes values in 
$\cat{SAlg}$.
\end{enumerate}
\end{prop}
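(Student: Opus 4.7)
The plan is to address the two implications separately, with the bulk of the work in the converse.

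For (i) $\Rightarrow$ (ii), I would verify directly that $\zF_{\zl}(\cA)$ is supercommutative for every $\cA\in\glAlg$. Given homogeneous $a\in\cA^{\za}$, $b\in\cA^{\zb}$, hypothesis (i) combined with the $\zl$-commutation rule gives
\[ ab \;=\; \zl(\za,\zb)\,ba \;=\; \zlsup\bigl(\zvf_{\zl}(\za),\zvf_{\zl}(\zb)\bigr)\,ba, \]
which is exactly the super-commutation rule in $\zF_{\zl}(\cA)$ with respect to the $\Z_2$-grading induced by $\zvf_{\zl}$. Hence $\zF_{\zl}(\cA)\in\cat{SAlg}$.

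For (ii) $\Rightarrow$ (i), my strategy is to test the hypothesis against a specific algebra. Fix $x,y\in\zG$ with $x\neq y$ (the diagonal case $x=y$ being automatic, since by the very definition of $\zvf_{\zl}$ one has $\zl(x,x)=(-1)^{\zvf_{\zl}(x)}=\zlsup(\zvf_{\zl}(x),\zvf_{\zl}(x))$). I would exhibit a $(\zG,\zl)$-commutative algebra $\cA$ together with homogeneous elements $a\in\cA^x$, $b\in\cA^y$ such that $ab\neq 0$. Hypothesis (ii) applied to $\cA$ then yields $ab=\zlsup(\zvf_{\zl}(x),\zvf_{\zl}(y))\,ba$, while the $\zl$-commutativity of $\cA$ reads $ab=\zl(x,y)\,ba$. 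Since $\zl(x,y)\in\K^{\ts}$, the element $ba$ is also nonzero, and comparing the two scalars forces the desired identity $\zl(x,y)=\zlsup(\zvf_{\zl}(x),\zvf_{\zl}(y))$.

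The main technical step — and the expected obstacle — is the construction of a suitable witness algebra. I would take the universal $(\zG,\zl)$-commutative algebra on two homogeneous generators of the prescribed degrees, realized as $\K\langle X,Y\rangle$ with $\deg X=x$, $\deg Y=y$, modulo the ideal generated by the single relation $XY-\zl(x,y)YX$, together with $X^2$ if $x\in\zG\odp$ and $Y^2$ if $y\in\zG\odp$. A straightforward PBW-type argument shows that the monomials $X^iY^j$ (with $i,j$ restricted to $\{0,1\}$ whenever the corresponding variable is odd) form a $\K$-basis of the quotient; in particular the class of $XY$ is nonzero, providing the required $a$ and $b$. An alternative, when both $x,y$ are even, is the twisted group algebra $\K^{\sigma}[\zG\evp]$ for any bicharacter $\sigma$ with $\sigma(\za,\zb)\sigma(\zb,\za)^{-1}=\zl(\za,\zb)$, which exists because $\zG\evp$ is finitely generated abelian. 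The delicate point in both cases is the PBW-type nondegeneracy, a standard fact about quadratic bicharacter-twisted algebras which is essentially the content of the existence statement recalled in Section~\ref{BasicProp}.
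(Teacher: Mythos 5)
The paper gives no proof of this proposition — it is simply stated and credited to \cite{SoS} — so there is no internal argument to compare against; your proof has to stand on its own, and it does. The direction (i)$\Rightarrow$(ii) is as immediate as you say. For (ii)$\Rightarrow$(i), testing the hypothesis against a single witness algebra per pair $(x,y)$ is the right strategy, and the free $(\zG,\zl)$-commutative algebra on two generators is the natural choice; note it is exactly the polynomial algebra $\K[S]=\bigvee\la S\ra_\K$ with $S=\{X,Y\}$ that the paper itself introduces in Section~3, so you could simply cite that construction rather than rebuild it from $\K\langle X,Y\rangle$ (which also saves you having to check separately that the quotient is $(\zG,\zl)$-commutative). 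Your ``PBW-type'' concern, while legitimate in general, is heavier than what is actually needed here: the only thing required is $XY\neq 0$, which is a word-length-two question. Writing $V=\K X\opl\K Y$, the word-length-two part of the ideal you quotient $T(V)$ by is exactly the span, inside the four-dimensional space $V^{\ots 2}$, of $X\ots Y-\zl(x,y)\,Y\ots X$, together with $X\ots X$ when $x\in\zG\odp$ and $Y\ots Y$ when $y\in\zG\odp$; no $\K$-combination of these equals a nonzero multiple of $X\ots Y$ (the coefficient of $Y\ots X$ would have to vanish, killing the first relation entirely). So $XY\neq 0$, and then $YX=\zl(y,x)\,XY\neq 0$ automatically, from which the identity for $\zl(x,y)$ follows by comparing the two expressions for $XY/(YX)$. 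One last remark: your alternative via twisted group algebras $\K^{\sigma}[\zG\evp]$ quietly relies on the existence of a bicharacter $\sigma$ with prescribed alternation, which is part of Scheunert's theorem and a heavier input than the free-algebra construction; since Proposition~\ref{regrading} is used upstream of the Nekludova--Scheunert machinery, the direct construction is the cleaner choice.
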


\subsection{Change of commutation factor}

Let $\zvs: \zG\ts\zG \to \K^{\ts}$ be a map. One can twist the multiplication of a $(\zG,\zl)$-commutative algebra  $\cA=(A,\cdot)$ by the map $\zvs$ in the following way:
\be \label{star-cdot} a\star b:=\zvs(\degr{a},\degr{b})\, a\cdot b \;.\ee
The resulting graded algebra $\cuA:=(A, \star)$ is an associative deformation of $\cA$ if and only if $\zvs$ satisfies the cocycle condition
$$ \zvs(x,y+z)\zvs(y,z)=\zvs(x,y)\zvs(x+y,z)\;,$$
for all $x,y,z\in\zG$. If in addition $\zvs(0,0)=1$, then $\cuA^{0}=\cA^{0}$ as algebras and
$\zvs$ is called \emph{multiplier} (see \cite{Sch}).
Note that biadditive maps are multipliers.

If $\zvs$ is a multiplier, the deformed algebra $\cuA$ is a $(\zG, \zls)$-commutative algebra,
 the commutation factor $\zls:\zG\ts\zG \to \K^{\ts}$ being given by
$$ \zls(a,b):=\zl(a,b)\zvs(a,b)(\zvs(b,a))^{-1}\;.$$
Several multipliers can lead to the same change of commutation factor. More precisely,
\begin{lem}\cite{Sch}\label{lem:biadditif}
Let $\zvs$, $\zvs'$ be two multipliers on $\zG$.  Then, $\zl^{\zvs} = \zl^{\zvs'}$ if and only if there exists a biadditive symmetric map  $\mathfrak{b}:\zG\ts\zG\to \K^{\ts}$  such that $\zvs'(x,y)= \zvs(x,y) \, \mathfrak{b}(x,y)$ for all $x,y \in \zG$.
\end{lem}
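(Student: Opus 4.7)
The plan is to prove both directions by direct manipulation of the defining formula $\zls(x,y) = \zl(x,y)\zvs(x,y)\zvs(y,x)^{-1}$, using as candidate for $\f{b}$ the pointwise ratio $\f{b}(x,y) := \zvs'(x,y)\zvs(x,y)^{-1}$, which makes the equation $\zvs' = \zvs \cdot \f{b}$ tautological.

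For the sufficient direction $(\Leftarrow)$, I would assume $\zvs' = \zvs \cdot \f{b}$ with $\f{b}$ biadditive and symmetric, and substitute into the formula for $\zl^{\zvs'}$:
\[ \zl^{\zvs'}(x,y) = \zl(x,y)\,\zvs(x,y)\f{b}(x,y)\cdot\bigl[\zvs(y,x)\f{b}(y,x)\bigr]^{-1} = \zls(x,y)\cdot\f{b}(x,y)\f{b}(y,x)^{-1}, \]
and the symmetry of $\f{b}$ forces the trailing factor to be $1$, yielding $\zl^{\zvs'} = \zls$.

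For the necessary direction $(\Rightarrow)$, the hypothesis $\zls = \zl^{\zvs'}$ simplifies, after cancellation of $\zl(x,y)$ and rearrangement, to $\zvs(x,y)\,\zvs'(y,x) = \zvs'(x,y)\,\zvs(y,x)$, which is exactly the symmetry $\f{b}(x,y) = \f{b}(y,x)$. Biadditivity of $\f{b}$ is then inherited from that of $\zvs$ and $\zvs'$: since $\K^{\ts}$ is abelian, the pointwise quotient of two biadditive maps into $\K^{\ts}$ is again biadditive.

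The main subtlety is that the definition of multiplier given in the paper is strictly weaker than biadditivity (a symmetric $2$-cocycle into $\K^{\ts}$ with $\zvs(0,0)=1$ need not be bimultiplicative, as seen already for $\zG = \Z_2$), so the inheritance argument above is valid only in the biadditive regime. This is precisely the convention of the original reference \cite{Sch}, and it is also the setting in which the lemma will be applied in the sequel, since the NS-multipliers parametrising the functors $I_{\zvs}$ (the elements of $\f{S}(\zl)$) are biadditive by definition.
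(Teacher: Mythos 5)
Your argument is correct, and since the paper merely cites Scheunert for this lemma without reproducing a proof, there is nothing to compare it against. The computation you perform is the natural one: writing $\mathfrak{b}:=\zvs'/\zvs$, the two deformed commutation factors are related by $\zl^{\zvs'}(x,y)=\zls(x,y)\cdot\mathfrak{b}(x,y)\mathfrak{b}(y,x)^{-1}$, so $\zls=\zl^{\zvs'}$ is exactly the symmetry of $\mathfrak{b}$; and, once $\zvs,\zvs'$ are biadditive, the biadditivity of $\mathfrak{b}$ is immediate because $\K^{\ts}$ is abelian.

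Your caveat about the word \emph{multiplier} is well placed and in fact uncovers an imprecision in the statement as written. With the paper's own definition (a $2$-cocycle normalized by $\zvs(0,0)=1$), the ``only if'' direction is false: over $\zG=\Z_2$, take $\zvs=\mathbf{1}$ and let $\zvs'$ be the symmetric normalized cocycle with $\zvs'(1,1)=2$ and $\zvs'=1$ elsewhere; then $\zl^{\zvs'}=\zls$, but $\zvs'/\zvs=\zvs'$ is not biadditive since $\zvs'(1+1,1)=1\neq 4=\zvs'(1,1)^{2}$. So the lemma must be read with Scheunert's stronger convention that multipliers are bicharacters, which, as you observe, is also the only regime the paper ever invokes, because the elements of $\fS(\zl)$ are biadditive. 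The one thing I would tighten is the phrase ``biadditivity is inherited'': it is worth writing out the single line $\mathfrak{b}(x+x',y)=\zvs'(x+x',y)\zvs(x+x',y)^{-1}=\zvs'(x,y)\zvs'(x',y)\zvs(x,y)^{-1}\zvs(x',y)^{-1}=\mathfrak{b}(x,y)\mathfrak{b}(x',y)$ rather than leaving it as an assertion.
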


For a classification of multipliers and commutation factors, we refer to \cite{GMW}.

\subsection{The Nekludova-Scheunert Theorem}\label{SNthmAlg}

Let $\zvs$ be a multiplier on $\zG$. In general, the $(\zG,\zl)$-commutative algebra $\cA=(A,\cdot)$ is not isomorphic to  the $(\zG,\zl^{\zvs})$-commutative algebra $\cuA:=(A, \star)$  obtained by change of commutation factor.
The link between these two types of algebra is encompassed by a functor, defined below.
\begin{prop}\cite{SoS}\label{Izvs}
Let $\zvs: \zG \ts \zG \to \K^{\ts}$ be a multiplier.
There exists an isomorphism of categories
\begin{eqnarray}\nonumber
\Ivs: \glAlg& \to & \glsAlg\\  \label{Eq:Izvs}
\cA=(A,\cdot) & \mapsto &\cuA=(A, \star)
\end{eqnarray}
equal to the identity on morphisms.
\end{prop}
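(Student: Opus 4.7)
The plan is to verify that $\Ivs$ is a well-defined functor, then construct an explicit inverse. Since the category equivalence is stated to be the identity on morphisms, the main content is in checking that the deformed multiplication $\star$ has the required algebraic properties, and that any $\zG$-algebra morphism automatically respects $\star$.

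First I would verify that $(A,\star)$ is a $\zG$-algebra. The grading is preserved since $a\star b$ and $a\cdot b$ lie in the same homogeneous component. The unit axiom uses the normalization $\zvs(0,0)=1$ together with the cocycle condition $\zvs(x,y+z)\zvs(y,z)=\zvs(x,y)\zvs(x+y,z)$: specializing at $x=y=0$ yields $\zvs(0,z)^{2}=\zvs(0,z)$, hence $\zvs(0,z)=1$, and similarly $\zvs(z,0)=1$; therefore $1\star a = a\star 1 = a$. Associativity is the core computation: for homogeneous $a,b,c$,
\begin{align*}
(a\star b)\star c &= \zvs(\degr{a},\degr{b})\,\zvs(\degr{a}+\degr{b},\degr{c})\,(a\cdot b)\cdot c,\\
a\star(b\star c) &= \zvs(\degr{b},\degr{c})\,\zvs(\degr{a},\degr{b}+\degr{c})\,a\cdot(b\cdot c),
\end{align*}
and these agree thanks to the associativity of $\cdot$ and the cocycle identity.

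Next I would check that $\cuA=(A,\star)$ is $(\zG,\zls)$-commutative. For homogeneous $a,b$,
\[
a\star b = \zvs(\degr{a},\degr{b})\,a\cdot b = \zvs(\degr{a},\degr{b})\,\zl(\degr{a},\degr{b})\,b\cdot a = \zl(\degr{a},\degr{b})\,\zvs(\degr{a},\degr{b})\,\zvs(\degr{b},\degr{a})^{-1}\,b\star a,
\]
which is exactly $\zls(\degr{a},\degr{b})\,b\star a$. For morphisms, if $f:\cA\to \cB$ is a morphism in $\glAlg$, then $f$ is $\K$-linear and degree-preserving, so for homogeneous $a,b$,
\[
f(a\star b)=\zvs(\degr{a},\degr{b})f(a\cdot b)=\zvs(\degr{a},\degr{b})f(a)\cdot f(b)=\zvs(\degr{f(a)},\degr{f(b)})f(a)\cdot f(b)=f(a)\star f(b),
\]
so the identity on morphisms is a well-defined functor between the two categories, and functoriality (compatibility with composition and identities) is obvious because everything is done on the underlying sets.

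Finally, to obtain the isomorphism of categories I would exhibit an explicit inverse. Set $\zvs^{-1}(x,y):=\zvs(x,y)^{-1}$; this is again a multiplier (the cocycle condition and normalization are preserved under pointwise inversion). Then $(\zls)^{\zvs^{-1}}=\zl$, so $I_{\zvs^{-1}}$ is a functor $\glsAlg\to\glAlg$, and by construction the deformed products satisfy $(a\star b)\star' b' = ab$, hence $I_{\zvs^{-1}}\circ I_{\zvs}=\mathrm{id}_{\glAlg}$ and symmetrically $I_{\zvs}\circ I_{\zvs^{-1}}=\mathrm{id}_{\glsAlg}$. There is no serious obstacle; the only mildly delicate step is the associativity verification, where one must carefully track the cocycle condition, and observing that the normalization $\zvs(0,0)=1$ combined with the cocycle gives $\zvs(0,-)=\zvs(-,0)=1$ so that the unit is preserved.
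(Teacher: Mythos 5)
Your proof is correct and follows the same route as the paper, which simply states that the result is obvious with inverse functor $I_{\zd}$ for $\zd(x,y)=\zvs(x,y)^{-1}$; you have just spelled out the routine verifications (associativity via the cocycle identity, unitality via $\zvs(0,-)=\zvs(-,0)=1$, the $(\zG,\zls)$-commutativity, and that degree-preserving algebra morphisms automatically intertwine $\star$). The only cosmetic slip is the display ``$(a\star b)\star' b'=ab$'' near the end, which should read that the doubly-deformed product $a\star' b := \zvs(\degr{a},\degr{b})^{-1}(a\star b)$ equals $a\cdot b$, but this does not affect the argument.
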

The proof is obvious, the inverse functor is given by $I_{\zd}$ with $\zd(x,y):=(\zvs(x,y))^{-1}\,$.

\medskip
According to Scheunert \cite{Sch}, for any commutation factor $\zl$, there exists a multiplier $\zvs$ such that $\zls$ factorizes through the parity. This uses the fact that $\zG$ is a finitely generated abelian group. By Lemma \ref{lem:biadditif}, the set of such \emph{NS-multipliers},
\[\label{fS(zl)}
 \fS(\zl):=\{\zvs: \zG \ts \zG \to \K^{\ts}\,  \vert \, \zls = \zlsup \circ (\zvf_{\zl}\ts \zvf_{\zl}) \} \;,
 \]
is parameterized by symmetric biadditive maps from $\zG \ts \zG$ to $\K^{\ts}$. It turns out that all $\zvs\in\f{S}(\zl)$ are biadditive \cite{Sch}. This result, combined with Propositions \ref{regrading} and \ref{Izvs}, yields the following Theorem, due to Nekludova  \cite[p. 280]{SoS}. Scheunert has proved  a similar theorem, in the Lie algebra setting \cite{Sch}.

\begin{thm}[The Nekludova-Scheunert Theorem]\label{thm:NSalg}
Let $\zl$ be a commutation factor on $\zG$.
There exists a biadditive map $\zvs$ such that the functor $\zF_{\zls}$ (defined in \eqref{regradFun}) takes values in $\cat{SAlg}$, and the composite
$$
\begin{array}{rcccl}
 \glAlg &\xrightarrow[\Ivs]{\;\sim\;} & \glsAlg & \xrightarrow[\zF_{\zls}]{} &\cat{SAlg}
\end{array}
$$
is  a faithful functor.
\end{thm}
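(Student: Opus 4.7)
The statement is essentially a synthesis of two earlier propositions together with the existence of a biadditive NS-multiplier, so the proof will be organized around these three ingredients.

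\smallskip

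\emph{Step 1 (Existence of a biadditive $\zvs$).} I would first produce a biadditive map $\zvs : \zG \times \zG \to \K^\ts$ such that the twisted commutation factor $\zls$ satisfies $\zls = \zlsup \circ (\zvf_{\zl} \ts \zvf_{\zl})$. Equivalently, since $\zvf_{\zls} = \zvf_{\zl}$ (the diagonal of $\zl$ is unchanged by the twist \eqref{star-cdot}), I need the skew-symmetric biadditive map $\zmu(x,y) := \zvs(x,y)\zvs(y,x)^{-1}$ to equal $\zlsup(\zvf_\zl(x),\zvf_\zl(y))\zl(x,y)^{-1}$. Using that $\zG$ is finitely generated and abelian, I would fix a decomposition $\zG \simeq \Z^{r} \opl \Z/n_{1}\Z \opl \cdots \opl \Z/n_{k}\Z$ with generators $e_{1},\ldots,e_{N}$ equipped with a total order, and define $\zvs$ on ordered pairs of generators by
\[
 \zvs(e_{i},e_{j}) = \begin{cases} 1 & \text{if } i \le j, \\ \zl(e_{j},e_{i})^{-1}\,(-1)^{\zvf_{\zl}(e_{i})\zvf_{\zl}(e_{j})} & \text{if } i > j.\end{cases}
\]
For each torsion generator $e_{k}$, the constraint $\zvs(e_{k},e_{j})^{n_{k}} = 1$ is satisfied because $\zl(e_{j},e_{k})^{n_{k}} = \zl(n_{k}e_{j},e_{k}) \cdot (\text{similar})$ by \eqref{cf2}--\eqref{cf3}, and $\K$ has characteristic zero so all necessary roots of unity exist. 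Biadditive extension to all of $\zG$ is then well-defined, and a direct check confirms $\zls$ has the desired symmetric form. This is essentially Scheunert's argument, which may just be cited.

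\smallskip

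\emph{Step 2 (Construction of the factorization).} With such a $\zvs\in\fS(\zl)$ in hand, Proposition \ref{Izvs} provides an isomorphism of categories $\Ivs : \glAlg \xrightarrow{\sim} \glsAlg$. Since $\zls$ factors through the parity by construction, Proposition \ref{regrading} applies to the target category and gives that $\zF_{\zls}$ sends every $(\zG,\zls)$-commutative algebra to a supercommutative algebra, i.e.\ takes values in $\cat{SAlg}$. This defines the composite functor.

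\smallskip

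\emph{Step 3 (Faithfulness).} The functor $\Ivs$ is an isomorphism of categories, hence in particular faithful. The regrading functor $\zF_{\zls}$ acts as the identity on morphisms (it only coarsens the grading on objects via $\zvf_{\zls}$), and therefore is faithful on every hom-set: if two $(\zG,\zls)$-algebra morphisms $f,g : \cA \to \cB$ become equal as $\K$-linear maps of the underlying $\Z_{2}$-graded algebras, they were already equal as graded maps. The composition of two faithful functors is faithful, which concludes the proof.

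\smallskip

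\emph{Main obstacle.} The only non-formal step is the existence in Step 1, which genuinely uses the finite generation of $\zG$ and the divisibility of $\K^{\ts}$ to accommodate torsion. The rest reduces to assembling the two previously established propositions and observing that faithfulness propagates through their composition.
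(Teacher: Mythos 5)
Your proposal follows the paper's route exactly: the paper gives no separate proof of this theorem, but derives it (in the paragraph immediately preceding) by citing Scheunert for the existence of a biadditive NS-multiplier and then combining Propositions \ref{Izvs} and \ref{regrading}; faithfulness is the same trivial observation ($\Ivs$ is a categorical isomorphism, $\zF_{\zls}$ is the identity on arrows). One caveat: the explicit formula you attempt in Step 1 has an inversion slip. With $\zvs(e_i,e_j)=\zl(e_j,e_i)^{-1}(-1)^{\zvf(e_i)\zvf(e_j)}$ for $i>j$ and $\zvs(e_j,e_i)=1$, the identity $\zl(e_j,e_i)^{-1}=\zl(e_i,e_j)$ from \eqref{cf1} gives $\zls(e_i,e_j)=\zl(e_i,e_j)\,\zvs(e_i,e_j)\,\zvs(e_j,e_i)^{-1}=\zl(e_i,e_j)^2(-1)^{\zvf(e_i)\zvf(e_j)}$, which is not $(-1)^{\zvf(e_i)\zvf(e_j)}$ unless $\zl(e_i,e_j)^2=1$. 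The correct choice is $\zvs(e_i,e_j)=\zl(e_i,e_j)^{-1}(-1)^{\zvf(e_i)\zvf(e_j)}$ for $i>j$ (equivalently $\zl(e_j,e_i)(-1)^{\zvf(e_i)\zvf(e_j)}$). Since you also offer to cite Scheunert directly, this does not sink the proof, but the formula as written would fail for commutation factors not valued in $\{\pm1\}$.
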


If $\zvs$ is a NS-multiplier, the functors $\Ivs$ satisfy automatically Theorem \ref{thm:NSalg} and we call them \emph{NS-functors}.

The $\zG$-algebras which are crossed product are characterized in \cite{NOy04}. This specifies as follows for $(\zG,\zl)$-commutative algebras.
\begin{cor}
An algebra $\cA$ is $(\zG,\zl)$-commutative and  a crossed product if and only if $\cA\simeq I_{\zvs}^{-1}(\cA^0[\zG])$ for some $\zvs\in\fS(\zl)$.
\end{cor}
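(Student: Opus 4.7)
The plan is to combine the characterization of $\zG$-graded crossed products from \cite{NOy04}---which presents any such $\cA$ as a twisted group algebra $\cA^0*_c\zG$ for a normalized $2$-cocycle $c\colon\zG\ts\zG\to(\cA^0)^\ts$---with the Nekludova--Scheunert functors $I_\zvs$, to absorb the cocycle into a suitable $\zvs\in\fS(\zl)$. In these terms, $I_\zvs^{-1}(\cA^0[\zG])$ is itself a twisted group algebra, with cocycle $\zvs^{-1}$, so the sought isomorphism amounts to finding $\zvs\in\fS(\zl)$ and a rescaling $\alpha\colon\zG\to(\cA^0)^\ts$ satisfying $c(\zg,\zh)=\zvs(\zg,\zh)^{-1}\,\alpha_\zg\alpha_\zh\alpha_{\zg+\zh}^{-1}$.

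A preliminary observation is that both directions force $\zG=\zG\evp$. For any $\zvs\in\fS(\zl)$, the NS-functor $I_\zvs$ preserves the crossed-product property (rescaling multiplication by units of $\K^\ts$ preserves invertibility), yet in a $(\zG,\zls)$-commutative algebra any odd homogeneous element $a$ satisfies $a^2=-a^2$, hence is nilpotent in characteristic zero and cannot be invertible. Thus $\zls=\mathbf{1}$, and both $\cA^0[\zG]$ and $I_\zvs(\cA)$ are commutative $\zG$-graded algebras. The backward direction follows immediately: $\cA^0[\zG]$ is a $(\zG,\mathbf{1})$-commutative crossed product by Example~\ref{ex:crossprod}, and applying $I_\zvs^{-1}$ (Proposition~\ref{Izvs}) preserves the crossed-product structure while converting commutativity into $(\zG,\zl)$-commutativity.

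For the forward direction, choose homogeneous invertible generators $u_\zg\in\cA^\zg$, so that $u_\zg u_\zh=c(\zg,\zh)\,u_{\zg+\zh}$ with $c(\zg,\zh)/c(\zh,\zg)=\zl(\zg,\zh)$ by $(\zG,\zl)$-commutativity. Fix any $\zvs_0\in\fS(\zl)$ (available by Theorem~\ref{thm:NSalg}). Combining biadditivity of $\zvs_0$ with the $\zG=\zG\evp$ case of the NS-condition $\zls=\mathbf{1}$ gives $\zvs_0(\zh,\zg)/\zvs_0(\zg,\zh)=\zl(\zg,\zh)$, hence $\zvs_0\,c$ is a symmetric $2$-cocycle. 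It remains to correct $\zvs_0$ by a symmetric biadditive map $\mathfrak{b}\colon\zG\ts\zG\to\K^\ts$---which by Lemma~\ref{lem:biadditif} keeps $\zvs=\zvs_0\mathfrak{b}$ inside $\fS(\zl)$---and to produce $\alpha$ realizing the coboundary equation above. The assignment $u_\zg\mapsto\alpha_\zg[\zg]$ then defines the required isomorphism $\cA\xrightarrow{\sim}I_\zvs^{-1}(\cA^0[\zG])$.

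The main obstacle is this last cohomological step: trivializing the class of $\zvs_0\,c$ in $H^2(\zG,(\cA^0)^\ts)$ modulo biadditive symmetric corrections valued in $\K^\ts$. Using the decomposition $\zG\simeq\Z^r\oplus\bigoplus_i\Z/n_i$, the free part contributes no obstruction (symmetric cocycles on $\Z^r$ are coboundaries), while each torsion summand yields an obstruction lying in $\op{Ext}^1(\Z/n_i,(\cA^0)^\ts)$ that must be dispatched by a suitable choice of $n_i$-th roots of unity in $\K^\ts$, using $\op{char}\K=0$. This step, together with the precise form of the \cite{NOy04} structural result, completes the argument.
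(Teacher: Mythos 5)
Your proof follows the same skeleton as the paper's: nilpotency of odd elements forces $\zG=\zG\evp$; NS-functors $I_\zvs$ with $\zvs\in\fS(\zl)$ preserve the crossed-product property and, when $\zG=\zG\evp$, take values in commutative algebras; and the backward implication is immediate from this. The paper compresses the forward implication into one sentence ("it is easy to prove that $I_\zvs(\cA)$ is isomorphic to $\cA^0[\zG]$"), whereas you unwind it into a cocycle computation: choosing homogeneous units $u_\zg$ gives a $2$-cocycle $c$ valued in $(\cA^0)^\ts$, $\zvs_0\,c$ is symmetric because $\zl^{\zvs_0}=\mathbf 1$ on $\zG\evp$, Lemma~\ref{lem:biadditif} allows symmetric biadditive corrections $\mathfrak b$ valued in $\K^\ts$, and one must realize $\zvs\,c$ as a coboundary $\alpha_\zg\alpha_\zh\alpha_{\zg+\zh}^{-1}$. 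Up to here you are more explicit than the paper, and correct.

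The gap is your final claim that the torsion obstruction in $\op{Ext}^1(\Z/n_i,(\cA^0)^\ts)\cong(\cA^0)^\ts/\big((\cA^0)^\ts\big)^{n_i}$ "must be dispatched by a suitable choice of $n_i$-th roots of unity in $\K^\ts$, using $\op{char}\K=0$". First, characteristic zero does not supply $n_i$-th roots of unity in $\K$ (take $\K=\Q$). Second, and more fundamentally, the correction $\mathfrak b$ is constrained to take values in $\K^\ts$ while the cocycle $c$ takes values in $(\cA^0)^\ts$; the image of the roots of unity of $\K$ in $(\cA^0)^\ts/\big((\cA^0)^\ts\big)^{n_i}$ need not contain the class of $c$. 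A concrete failure: $\K=\Q$, $\zG=\Z/2$, $\zl=\mathbf 1$, and $\cA=\Q[u]/(u^2-2)$ with $\cA^0=\Q$, $\cA^1=\Q u$. This is a $(\Z/2,\mathbf 1)$-commutative crossed product, yet for any $\zvs\in\fS(\mathbf 1)$ one has $\zvs(1,1)=\pm1$, so $I_\zvs^{-1}(\Q[\Z/2])$ is either $\Q\times\Q$ or $\Q(\qi)$, and neither is isomorphic to $\Q(\sqrt2)$ even as ungraded $\Q$-algebras. So your last step cannot be carried out over a general field of characteristic zero. To be fair, this is exactly where the paper hand-waves with "it is easy to prove"; your more careful setup has the merit of exposing that the statement silently requires a hypothesis guaranteeing that $(\cA^0)^\ts/\big((\cA^0)^\ts\big)^{n_i}$ is covered by the $n_i$-th roots of unity of $\K$ (true, e.g., when $\K=\cA^0$ is algebraically closed, or when $\K=\cA^0=\R$), but you should not assert that $\op{char}\K=0$ alone suffices.
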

\begin{proof}
Let $\cA$ be a $(\zG,\zl)$-commutative algebra. Since odd elements are nilpotent,  $\cA$ is a crossed product implies $\zG=\zG\evp$.
For such $\zG$, the NS-functor $I_{\zvs}$, with  $\zvs\in\f{S}(\zl)$, takes values in the category $(\zG, \mathbf{1})$-\texttt{Alg} of commutative $\zG$-graded algebras.
Moreover, it clearly sends crossed products to crossed products.
Thence, from the crossed product $\cA^{0}[\zG]$, we get a $(\zG,\zl)$-commutative algebra $I_{\zvs}(\cA^{0}[\zG])$ which is also a crossed product.
Conversely, if $\cA$ is $(\zG,\zl)$-commutative and  a crossed product, it is easy to prove that  $I_{\zvs}^{-1}(\cA)$ is isomorphic to  $\cA^{0}[\zG]$, and the result follows.
\end{proof}
\begin{rema}\label{crossproduct}
By definition, the algebra $I_{\zvs}^{-1}(\cA^0[\zG])$ is the $\cA^{0}$-module generated by the group elements $(e_{\za})_{\za\in\zG}\,$, with product law
$e_{\za}e_{\zb}= \zvs(\za,\zb)e_{\za+\zb}$ for all $\za,\zb\in \zG$.
This algebra
 is usually denoted by $\cA^{0} \rtimes_{\zvs} \zG$ and called \emph{crossed product of $\cA$ by $\zG$ relatively to $\zvs$}. 
\end{rema}

We provide now some examples of NS-functors.
\begin{exa} \label{ex:BL}{\rm
The algebra of differential forms $\zW$ over a smooth supermanifold is a  $\Z_2\times\Z_2$-algebra, where the $\Z_2\times\Z_2$-degree of a homogeneous differential form $\za$ is  provided by
$$
\degr{\za} =(\bar{\za}, |\za|)\;,
$$
with $\bar{\za}$ the ``super'' degree and  $|\za|$ the cohomological degree modulo $2$. There exist two conventions for the commutation relation of homogeneous differential forms \cite{DM99},
\begin{eqnarray*}
&\za\wedge \zb =(-1)^{\bar{\za}\bar{\zb}+|\za||\zb|} \zb\wedge \za\;,  & \text{Deligne sign rule},\\
&\za\wedge \zb =(-1)^{(\bar{\za}+|\za|)(\bar{\zb}+|\zb|)} \zb\wedge \za\;,  & \text{Bernstein-Leites sign rule}.
\end{eqnarray*}
They correspond to two distinct commutation factors on $\Z_2\times\Z_2$, which are related by a NS-functor $\Ivs$ (see \cite[p. 64]{DM99}), with NS-multiplier given by $ \zvs(\degr{\za},\degr{\zb})=(-1)^{\pari{\za}|\zb|}$. Moreover, the morphism
$$
\zvf: \, \Z_2\times\Z_2 \ni \degr{\za} \; \mapsto \;\bar{\za}+|\za| \in \Z_{2}\;,
$$
induces a parity on $\zW$, which  turns $\zW$ into a supercommutative algebra for the Bernstein-Leites sign rule.
}\end{exa}

\begin{exa}\label{quaternion}{\rm
Consider the algebra of quaternions $\qH\simeq\op{Cl}(0,2)$, with multiplication law
 \be \label{quaternions}\begin{tikzpicture}[
cell/.style={rectangle,draw=black},
space/.style={
    minimum height=1.5em,
    matrix of math nodes,
    row sep=-\pgflinewidth,
    column sep=-\pgflinewidth,
    column 1/.style={font=\ttfamily}},
text depth=0.5ex,
text height=2ex,
nodes in empty cells]
\matrix (m1) [space,
                    column 1/.style={minimum width=3em,nodes={cell,minimum width=3.5em}},
                    column 2/.style={minimum width=3em,nodes={cell,minimum width=3.5em}},
                    column 3/.style={minimum width=3em,nodes={cell,minimum width=3.5em}},
                    column 4/.style={minimum width=3em,nodes={cell,minimum width=3.5em}}]
{
\cdot & \qi  & \qj & \qk \\
\qi   & -1  & \qk & -\qj \\
\qj   & -\qk  & -1 & \qi \\
\qk   & +\qj  & -\qi & -1\\
};
\end{tikzpicture}
\ee
Setting $\degr{\qi}, \degr{\qj}, \degr{\qk}\in(\Z_2)^3$ as follows,
$$
\degr{\qi}:=(0,1,1)\,, \quad \degr{\qj}:=(1,0,1)\,, \quad \degr{\qk}:=(1,1,0)\;,
$$
the algebra $\qH$ turns into a real $((\Z_2)^3, \zl)$-commutative algebra with commutation factor $\zl(x ,y ):=(-1)^{\la x ,y \ra}=(-1)^{x _1y _1+x _2y _2+x _3y _3}$, where $x=(x_1,x_2,x_3)$ and $y=(y_1,y_2,y_3)\in (\Z_2)^3$.

The NS-multiplier $\zvs(x ,y ):=(-1)^{x _1(y _2+y _3)+x _2y _3}$ yields the following twisted multiplication:
\[\begin{tikzpicture}[
cell/.style={rectangle,draw=black},
space/.style={
    minimum height=1.5em,
    matrix of math nodes,
    row sep=-\pgflinewidth,
    column sep=-\pgflinewidth,
    column 1/.style={font=\ttfamily}},
text depth=0.5ex,
text height=2ex,
nodes in empty cells]
\matrix (m1) [space,
                    column 1/.style={minimum width=3em,nodes={cell,minimum width=3.5em}},
                    column 2/.style={minimum width=3em,nodes={cell,minimum width=3.5em}},
                    column 3/.style={minimum width=3em,nodes={cell,minimum width=3.5em}},
                    column 4/.style={minimum width=3em,nodes={cell,minimum width=3.5em}}]
{
\star  & \qi  & \qj & \qk \\
\qi   & +1  & -\qk & -\qj \\
\qj   & -\qk  & +1 & -\qi \\
\qk   & -\qj  & -\qi & +1\\
};
\end{tikzpicture}\]
Another choice of NS-multiplier, e.g., $\zvs(x ,y ):=(-1)^{x _1y _3+ x _2(y _1+y _2+y _3)}$, leads to a different product:
\[\begin{tikzpicture}[
cell/.style={rectangle,draw=black},
space/.style={
    minimum height=1.5em,
    matrix of math nodes,
    row sep=-\pgflinewidth,
    column sep=-\pgflinewidth,
    column 1/.style={font=\ttfamily}},
text depth=0.5ex,
text height=2ex,
nodes in empty cells]
\matrix (m1) [space,
                    column 1/.style={minimum width=3em,nodes={cell,minimum width=3.5em}},
                    column 2/.style={minimum width=3em,nodes={cell,minimum width=3.5em}},
                    column 3/.style={minimum width=3em,nodes={cell,minimum width=3.5em}},
                    column 4/.style={minimum width=3em,nodes={cell,minimum width=3.5em}}]
{
\star  & \qi  & \qj & \qk \\
\qi   & -1  & \qk & -\qj \\
\qj   & \qk  &+1  & \qi \\
\qk   &  -\qj & \qi & -1\\
};
\end{tikzpicture}\]
Since the two above tables are symmetric, both products ``$\star\,$'' are classically commutative.
}\end{exa}

\begin{exa}\label{Ex:Clifford} {\rm
According to Example \ref{Clifford1}, the Clifford algebra $\op{Cl}(p,q)$ is a $\left((\Z_2)^{n+1},\zl \right)$-commutative algebra ($n=p+q$), with $\zl(-,-) =\ks{-}{-}$.
Choosing  as multiplier the biadditive map
$$\zvs(x,y):=(-1)^{\sum_{i<j}x_iy_j},$$
we obtain the new commutation factor $\zls=\mathbf{1}$, and the algebra $\Ivs\lp\op{Cl}(p,q)\rp$ is then commutative.
}
\end{exa}

\section{Categories of Graded Modules} \label{Sec2}

From now on, $\cA$ denotes a $(\zG,\zl)$-commutative algebra.
We prove an analogue of Nekludova-Scheunert Theorem for the monoidal category of graded
$\cA$-modules. For graded modules over $\zG$-algebras we refer to \cite{NOy04}, as for basic notions of category theory, we refer to the appendix based on \cite{McL,Hov,Kelly}.

\subsection{Definitions}\label{DefMod}

A \emph{left} (resp. \emph{right}) \emph{graded $\cA$-module} $M$ is a $\zG$-graded vector space  over $\K$, $M=\oplus_{\zb \in \zG}M^{\zb}$,  endowed with a compatible $\cA$-module structure, $\cA^{\za}M^{\zb} \subset M^{\za+\zb}$ (resp. $M^{\zb}
\cA^{\za} \subset M^{\za+\zb}$), for all $\za,\zb\in\zG$.
As $\cA$ is a $(\zG,\zl)$-commutative algebra, a left graded $\cA$-module structure on $M$ also defines a right graded $\cA$-module structure on $M$, e.g.,  by setting
\be \label{rlmod}  m\cdot a = \zl(\degr{m},\degr{a})\, a\cdot m\;,\ee
for any  $a\in\cA$, $m\in M\,$.
A graded $\cA$-module is a graded module over $\cA$ with compatible left and right structures, in the sense of \eqref{rlmod}.

A \emph{morphism of graded $\cA$-modules} is a map $\ell: M\to N$ which is $\cA$-linear and of degree $0\,$,
\beas \ell(m+m'a)=\ell(m)+\ell(m')a & \mbox{ and }& \degr{\ell(m)} =\degr{m},\eeas
for all homogeneous $m,m'\in M$ and $a\in \cA$.
Notice that ``$\cA$-linear'', here and thereafter, means right $\cA$-linear, as it is of common use in superalgebra theory.
We denote the set of such morphisms by $\Hom_{\cA}(M,N)$.
Graded $\cA$-modules and corresponding morphisms form a 
category denoted $\Moda$.

\begin{exa}{\rm
If $\cA$ is a supercommutative algebra (see Example \ref{exasuper}), the graded $\cA$-modules are usually called supermodules (see, e.g., \cite{DM99}).
Accordingly, the category $(\Z_{2})$-$\cat{Mod}_{\cA}$ is denoted by $\sMod_{\cA}\,$.}
\end{exa}

\subsection{Closed Symmetric Monoidal Structure}\label{otsMod}

The tensor product of graded bimodules over a $\zG$-algebra  and the internal $\iHom$ functor (see \cite{NOy04}) particularizes in $\Moda$ as follows.

\subsubsection{Tensor product}

Let $M,N$ be two graded $\cA$-modules. Their tensor product is defined as the quotient $\Z$-module
$$ M\ots_{\cA}N:= \EnsQuot{(M \ots_{\Z}N)}{I} $$
where $I=\op{span}\{ma\ots n -m\ots an \; \vert \; m\in M, n\in N, a\in \cA\}$. The $\zG$-grading
\be (M\ots_{\cA}N)^{\zg}:=\bigoplus\nolimits_{\za+\zb =\zg}\left\{\sum m\ots n \;\Big\vert \; m\in M^{\za}, n\in N^{\zb}\right\} \label{grmodtens}\ee
together with the following $\cA$-module structures,
\beas
a(m\ots n) :=(am)\ots n
&\quad \mbox{ and }\quad&
(m\ots n)\,a := m\ots (na)\;,
\eeas
turns $M\ots_{\cA}N$ into a graded $\cA$-module.
The tensor product $\ots_{\cA}$ endows the category $\Moda$ with a monoidal structure.
The isomorphism
\bemaps
 \zb_{M,N}:&  M\ots_{\cA} N  & \raa &   N \ots_{\cA} M   \\
 & m\ots n & \mapsto & \zl(\degr{m},\degr{n})\, n \ots m
 \eemaps
defines a braiding, which satisfies
$\zb_{N,M} \circ \zb_{M,N}=\id_{M\ots_{\cA}N}$ thanks to the properties of the commutation factor $\zl$.
Hence, the category $\Moda$ is a symmetric monoidal category.

\begin{rema}\label{algebratensoralgebra}
If $\cA$ and $\cB$ are two $(\zG,\zl)$-commutative algebras over $\K$, their tensor product $\cA \ots_{\K} \cB$ is $\zG$-graded according to \eqref{grmodtens}. The natural product
$$
(a_{1}\ots b_{1})(a_{2}\ots b_{2})= \zl(\degr{b_{1}},\degr{a_{2}})\, (a_{1}a_{2}) \ots (b_{1}b_{2})
$$
turns $\cA \ots_{\K} \cB$ into a $(\zG,\zl)$-commutative algebra.
\end{rema}

\subsubsection{Internal $\iHom$}
A monoidal category is \emph{closed} if the binary operation $\otimes$ defining the monoidal structure admits an adjoint operation, the so-called \emph{internal $\iHom$ functor}. By definition, this latter satisfies the natural isomorphism
$$
 \Hom(N\ots M, P) \simeq \Hom( N, \iHom(M,P) ),
$$
for all objects $M$, $N$ and $P$. The next result is an easy adaptation of the ungraded case and appears, e.g., in \cite[Prop.\ 2.4.9]{NOy04}.

\begin{prop}
The category  $\Moda$ of graded $\cA$-modules is a closed symmetric monoidal category. The internal $\iHom$ is given by
$$ \iHom_{\cA}(M,N) := \bigoplus\nolimits_{\zg\in \zG} \iHom^{\zg}_{\cA}(M,N)$$
where, for each $\zg\in \zG$,
$$ \iHom^{\zg}_{\cA}(M,N):=\{ f:M \raa N |\, \mbox{ $f$ is $\cA$-linear and } f(M^{\za})\subset N^{\za+\zg}, \text{ for all } \za \in \zG\}\;.$$
The $\cA$-module structure of  $\iHom_{\cA}(M,N)$ reads as
\beas
(af)(m):= a\cdot f(m) &\text{and} & (fa)(m):=\zl(\degr{a}, \degr{m}) f(m)\cdot a\;,
\eeas
where $f$ is a morphism and $m\in M$, $a\in \cA$ are homogeneous.
\end{prop}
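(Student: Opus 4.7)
My plan is to establish the adjunction $-\ots_\cA M \dashv \iHom_\cA(M,-)$ by building the unit/counit at the level of sets, and then verifying that everything respects the grading and bimodule structure coming from the $\zl$-commutation rule \eqref{rlmod}.

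First, I would verify that the claimed $\iHom_\cA(M,N)$ is a well-defined object of $\Moda$. This requires three routine but necessary checks: (i) that the grading decomposition $\iHom_\cA(M,N)=\bigoplus_{\zg\in\zG}\iHom^\zg_\cA(M,N)$ genuinely holds, i.e. that any $\cA$-linear map which is homogeneous can be uniquely decomposed (follows from $M=\bigoplus M^\za$ and the analogous decomposition of $N$); (ii) that the right action $(fa)(m):=\zl(\degr{a},\degr{m})f(m)\cdot a$ is again $\cA$-linear; and (iii) that the left and right actions on $\iHom_\cA(M,N)$ are compatible in the sense of \eqref{rlmod}, so that the result is honestly a graded $\cA$-module in our sense. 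For (ii), one computes $(fa)(mb)$ using $ba=\zl(\degr{b},\degr{a})ab$ and the identity $\zl(\degr{a},\degr{b})\zl(\degr{b},\degr{a})=1$ from \ref{cf1}; the two extra $\zl$-factors cancel and one recovers $((fa)(m))\cdot b$.

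Next, I would construct the natural bijection
\[
\zF:\Hom_\cA(N\ots_\cA M,P)\;\longrightarrow\;\Hom_\cA(N,\iHom_\cA(M,P))
\]
by $\zF(\zvf)(n)(m):=\zvf(n\ots m)$, exactly as in the ungraded case. The content is to check, for homogeneous $n\in N^\zb$, that $\zF(\zvf)(n)\in\iHom^{\,0}_\cA(M,P)\subset\iHom_\cA(M,P)$ and more precisely that the internal degree of $\zF(\zvf)(n)$ equals $\degr{n}$, so that $\zF(\zvf)$ is itself of external degree $0$; that $\zF(\zvf)(n)$ is right $\cA$-linear; and that $\zF(\zvf)$ is right $\cA$-linear as a map $N\to\iHom_\cA(M,P)$. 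The $\zl$-factors arising from $n\cdot a\ots m = \zl(\degr{a},\degr{m})^{-1}\,n\ots am$ (via the defining relation of $\ots_\cA$ combined with \eqref{rlmod}) match exactly the $\zl$-factor built into the right $\cA$-action on $\iHom_\cA(M,P)$, which is precisely what makes $\zF$ well-defined. The inverse sends $\zc:N\to\iHom_\cA(M,P)$ to $n\ots m\mapsto\zc(n)(m)$; well-definedness on the tensor product requires the relation $\zc(na)(m)=\zc(n)(am)$, which follows from the definitions of the right $\cA$-action on $\iHom_\cA$ and of \eqref{rlmod} on $M$.

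Finally, naturality of $\zF$ in the three variables $N,M,P$ is immediate from the formulas once well-definedness is established, and symmetry of the monoidal structure (already proved in the preceding subsection via $\zb_{M,N}$) then upgrades the adjunction $-\ots_\cA M\dashv \iHom_\cA(M,-)$ to the closed symmetric monoidal structure claimed in the proposition.

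The only genuine obstacle is bookkeeping with the commutation factor: one must consistently track whether a given $\zl$-factor originates from passing a scalar across a tensor sign (relation defining $\ots_\cA$), from converting left to right action via \eqref{rlmod}, or from the explicit definition of the right action on $\iHom_\cA$. Once one fixes the convention that ``$\cA$-linear'' means right $\cA$-linear, all these factors cancel pairwise using \ref{cf1}--\ref{cf3}, and the proof reduces to the classical adjunction argument.
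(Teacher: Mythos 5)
Your proposal is correct and takes essentially the same route the paper intends (the paper itself only cites N\u{a}st\u{a}sescu--Van Oystaeyen and keeps a short proof sketch in a commented-out block, which is precisely the explicit adjunction isomorphism $\zvf\mapsto(n\mapsto\zvf(n\ots -))$ that you build). One small bookkeeping slip: by the defining relation of $\ots_{\cA}$ one has $na\ots m=n\ots am$ with no $\zl$-factor at all; the factor $\zl(\degr{a},\degr{m})$ enters one step later, when one rewrites $n\ots am=\zl(\degr{a},\degr{m})\,(n\ots m)\cdot a$ using \eqref{rlmod} and then applies the right $\cA$-linearity of $\zvf$, and it is there that it matches the $\zl$-factor in the right action on $\iHom_{\cA}(M,P)$ — the mechanism you describe is right, but the displayed identity $n\cdot a\ots m=\zl(\degr{a},\degr{m})^{-1}\,n\ots am$ is not.
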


By abuse of notation, we will refer to elements in $\iHom_{\cA}(M,N)$ as \emph{graded morphisms} and to elements in the subsets  $\iHom^{\zg}_{\cA}(M,N)$ as \emph{homogeneous morphisms} or more precisely as \emph{morphisms of degree $\zg\,$}. Elements in
$\iHom^{0}_{\cA}(M,N)=\Hom_{\cA}(M,N)$ will be called either \emph{morphisms} or \emph{morphisms of degree $0$}.

\subsection{An extension of the Nekludova-Scheunert Theorem}\label{Sec:NSthmMod}

Assume $\zvs\in\fS(\zl)$ is a NS-multiplier, so that Nekludova-Scheunert Theorem applies. The isomorphism of categories
$\Ivs$, defined in \eqref{Eq:Izvs},
yields an analogous isomorphism between categories of graded modules,
\bemaps
\widehat{\Ivs}:& \Moda &\xrightarrow{\sim}&   \Modabar \\
& M  &\mapsto & \und{M} \\
& \Hom_{\cA}(M,N) \ni f & \mapsto & f \in \Hom_{\cuA}(\und{M},\und{N})
\eemaps
Here, $\und{M}:=\widehat{\Ivs}(M)$ is the $\zG$-graded vector space $M$ endowed with the $\cuA$-module structure
\be\label{abarmod}
a\star m := \zvs(\degr{a},\degr{m})\, a\cdot m \;, \quad \forall  a\in \cuA, \forall m\in \und{M}\; .
\ee
Moreover, the regrading functor $\zF_{\zls}:\, \glsAlg \to \sAlg\,$ induces analogously a regrading functor on graded modules
\be\label{regrad}
\widehat{\zF}_{\zls}:\, \Modabar \to \sMod_{\cuA}
\ee
for all $(\zG,\zls)$-commutative algebra $\cuA$.
Hence, we get a faithful functor
$$ \Moda \xrightarrow{\widehat{\Ivs}} \Modabar \xrightarrow{\widehat{\zF}_{\zls}} \sMod_{\cuA}\;. $$
To investigate the properties of the above functors with respect to the symmetric monoidal structures
on $\Moda$, $\Modabar$ and $\sMod_{\cuA}$,  we need further notions of category theory.

\subsubsection{Closed Monoidal Functors}

In this section, we use notation from Appendix.
For example, monoidal categories are written as quintuple: $(\CC, \ots , \I, \za, r, l )$ refers to a monoidal category $C$ with tensor product $\ots$, identity object $\I$ and structural maps $(\za, r, l )$.

\begin{defi}\label{MonoidalFun}
A \emph{Lax monoidal functor} between two monoidal categories $(\CC, \ots , \I, \za, r, l )$ and $(\DD, \bots , \I',\za', r', l')$ is a triple $(F, u, \zt )$
which consists of
\begin{enumerate}[label=(\roman *)]
    \item a functor $F: \CC \to \DD$,
    \item a morphism $u: \I'\to F(\I)$ in $\DD$,
    \item a natural morphism $\zt$, i.e., a family of morphisms  in $\DD$ natural in $X,Y \in \Obj(\CC)\,$,
     $$\zt_{\hspace{-0.1cm}\phantom{.}_{X,Y}}: F(X) \bots  F(Y) \to F(X\ots  Y)\;,$$
\end{enumerate}
 such that the following diagrams commute.
\[
\begin{tikzpicture}
\matrix(m)[matrix of math nodes, row sep=4em, column sep=-5em]
{
 &\Big(F(X)\bots F(Y)\Big)\bots F(Z)  & \qquad \qquad \qquad \qquad \qquad \qquad \qquad \qquad \qquad& F(X)\bots \Big(F(Y)\bots F(Z)\Big) & \\
 F(X \ots Y)\bots F(Z)  & && & F(X) \bots F(Y\ots Z)  \\
 & F\Big((X \ots Y)\ots Z\Big) &&  F\Big(X \ots (Y\ots Z)\Big) & \\
};
\path[->]
(m-1-2) edge node[auto] {$ \za'_{\hspace{-0.1cm}\phantom{.}_{F(X),F(Y),F(Z)}}  $} (m-1-4)
        edge node[left] {$ \zt_{\hspace{-0.1cm}\phantom{.}_{X,Y}} \bots \id_{\hspace{-0.1cm}\phantom{.}_{F(Z)}}\;\;\; $} (m-2-1)
(m-2-1) edge node[below left] {$ \zt_{\hspace{-0.1cm}\phantom{.}_{X \ots Y,Z}} $} (m-3-2)
(m-3-2)edge node[auto] {$ F\za_{\hspace{-0.1cm}\phantom{.}_{X,Y,Z}} $} (m-3-4)
(m-1-4) edge node[right] {$\;\;\; \id_{\hspace{-0.1cm}\phantom{.}_{F(X)}} \bots \zt_{\hspace{-0.1cm}\phantom{.}_{Y,Z}} $} (m-2-5)
(m-2-5) edge node[below right] {$ \zt_{\hspace{-0.1cm}\phantom{.}_{X,Y\ots Z}} $}  (m-3-4)
; \end{tikzpicture}
\]
\beas
\begin{tikzpicture}
\matrix(m)[matrix of math nodes, row sep=4em, column sep=3em]
{
\I'\bots  F(X)  & F(X) \\
 F(\I) \bots  F(X) & F( \I \ots X) \\
};
\path[->]
(m-1-1) edge node[auto] {$ l'_{\hspace{-0.1cm}\phantom{.}_{F(X)}}  $} (m-1-2)
         edge node[left] {$ u \bots  \id_{\hspace{-0.1cm}\phantom{.}_{F(X)}} $} (m-2-1)
(m-2-2) edge node[right] {$ Fl_{\hspace{-0.1cm}\phantom{.}_{X}} $} (m-1-2)
(m-2-1) edge node[below] {$ \zt_{\hspace{-0.1cm}\phantom{.}_{\I,X}}  $}(m-2-2) (m-2-1)
;\end{tikzpicture}
& \quad &
\begin{tikzpicture}
\matrix(m)[matrix of math nodes, row sep=4em, column sep=3em]
{
F(X)\bots \I' & F(X) \\
F(X) \bots F(\I) & F(X\ots \I) \\
};
\path[->]
(m-1-1) edge node[auto] {$ r'_{\hspace{-0.1cm}\phantom{.}_{F(X)}}  $} (m-1-2)
         edge node[left] {$ \id_{\hspace{-0.1cm}\phantom{.}_{F(X)}} \bots u  $} (m-2-1)
(m-2-2) edge node[right] {$Fr_{\hspace{-0.1cm}\phantom{.}_{X}}$} (m-1-2)
 (m-2-1) edge node[below] {$   \zt_{\hspace{-0.1cm}\phantom{.}_{X, \I}}  $}(m-2-2) (m-2-1)
;\end{tikzpicture}
\eeas

A Lax monoidal functor is called a \emph{monoidal functor} if $u$ and all $\zt_{\hspace{-0.1cm}\phantom{.}_{M,N}}$ are isomorphisms.
\end{defi}


\begin{defi}\label{funmono}
A (Lax) monoidal functor between  symmetric monoidal categories
$$
 (F,u,\zt):\; (\CC, \ots , \I,\za ,r , l, \zb) \to (\DD, \bots , \I',\za',r',l', \zb')
$$
 is called  \emph{symmetric} if it commutes with the braidings, i.e. if the following diagram commutes.
\[
\begin{tikzpicture}
\matrix(m)[matrix of math nodes, row sep=4em, column sep=6em]
{
F(X)\bots F(Y)  & F(Y)\bots F(X) \\
F(X \ots Y) & F(Y\ots X) \\
};
\path[->]
(m-1-1) edge node[auto] {$ \zb'_{\hspace{-0.1cm}\phantom{.}_{F(X),F(Y)}}  $} (m-1-2)
         edge node[left] {$ \zt_{\hspace{-0.1cm}\phantom{.}_{X,Y}} $} (m-2-1)
(m-1-2) edge node[auto] {$ \zt_{\hspace{-0.1cm}\phantom{.}_{Y,X}} $} (m-2-2)
(m-2-1) edge node[below] {$  F\zb_{\hspace{-0.1cm}\phantom{.}_{X,Y}} $}(m-2-2)
;\end{tikzpicture}
\]
\end{defi}

Given a (Lax) monoidal functor $(F,u,\zt)$ between two closed monoidal categories,
one can construct a natural transformation $\zh$ via the internal $\iHom_{\DD}$ adjunction as follows (we omit indices of natural transformations):
\bea\nonumber
\Hom_{\DD}\Big( F\big(\iHom_{\CC}(X,Y)\big)\bots  F(X)\,,\,F(Y) \Big)
& {\simeq} &
 \Hom_{\DD}\Big( F\big(\iHom_{\CC}(X,Y)\big)\,,\, \iHom_{\DD}\big(F(X),F(Y)\big) \Big) \\ \label{etadef}
F(\mathrm{ev})\circ\zt & \leftrightarrow & \zh
\eea
where 
$$
\mathrm{ev}:\,\iHom_{\CC}(X,Y)\otimes X \raa Y
$$
is the evaluation map. The triple $(F,u,\zh)$ is a \emph{Lax closed functor} between the closed categories $(\CC, \iHom_{\CC}, \I_{\CC})$ and $(\DD, \iHom_{\DD}, \I_{\DD})$ (see \cite{EK66} for the definition). It is a  \emph{closed functor} if moreover $\zh$ is an isomorphism.
Note that, even if $u$ and $\zt$ are isomorphisms, $\zh$ may not be one.

\subsubsection{NS-Functors on Modules}

Obviously, from the regrading functor $\widehat{\zF}_{\zls}$ defined in \eqref{regrad}, we obtain a symmetric monoidal functor,
$$(\widehat{\zF}_{\zls}, \id, \id):\, \Modabar \raa \sMod_{\cuA}\;,$$
which induces via \eqref{etadef} a closed functor $(\widehat{\zF}_{\zls},\id,\id)\,$.

\begin{thm}\label{thm:NSmod}
Let $\cA$ be a $(\zG,\zl)$-commutative algebra, $\zvs\in\fS(\zl)$ and $\cuA=\Ivs(\cA)$ the associated $\zG$-graded supercommutative algebra. For every $u\in \Aut_{\cuA}(\cuA)$, there exists a natural transformation $\zt$, such that
\beas (\widehat{\Ivs}, u,  \zt):\; \Moda \raa \Modabar \eeas
is a symmetric monoidal functor, which induces a closed functor $(\widehat{\Ivs},u,\zh)\,$.
By composition with the regrading monoidal functor $(\widehat{\zF}_{\zls}, \id, \id)$, we get a symmetric monoidal functor
$$
\lp\widehat{\zF}_{\zls}\circ \widehat{\Ivs}, \widehat{\zF}_{\zls}(u),\widehat{\zF}_{\zls}(\zt)\rp:\,\Moda \raa \sMod_{\cuA}
$$
which induces a closed functor $\lp\widehat{\zF}_{\zls}\circ \widehat{\Ivs}, \widehat{\zF}_{\zls}(u),\widehat{\zF}_{\zls}(\zh)\rp$.
\end{thm}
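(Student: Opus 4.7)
The plan is first to pin down the structure morphism $u$: via the natural identification $\widehat{\Ivs}(\cA)\simeq\cuA$ as left $\cuA$-modules, any $u\in\Aut_{\cuA}(\cuA)$ is right multiplication by the invertible element $c=u(1)\in(\cuA^{0})^{\ts}=(\cA^{0})^{\ts}$. With $u$ thus fixed, I take as candidate for the monoidal transformation the family
\[
\zt^{u}_{M,N}\lp m\ots n\rp := c^{-1}\,\zvs(\degr{m},\degr{n})\, m\ots n\,,
\]
defined on homogeneous tensors in $\und{M}\ots_{\cuA}\und{N}$, valued in $\und{M\ots_{\cA}N}$, and extended by linearity.

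The verification splits into three blocks. First, well-definedness and $\cuA$-bilinearity of $\zt^{u}_{M,N}$: translating the defining tensor relation $m\star a\ots n = m\ots a\star n$ via \eqref{abarmod} and \eqref{rlmod} yields a cocycle identity on $\zvs$ which is automatic from the biadditivity enjoyed by every $\zvs\in\fS(\zl)$, and naturality in $M,N$ is immediate from the degree-preservation of morphisms. Second, the coherence diagrams: the associator pentagon reduces to $\zvs(\za,\zb)\zvs(\za+\zb,\zg)=\zvs(\za,\zb+\zg)\zvs(\zb,\zg)$, again biadditivity; the unit triangles are precisely what force the prefactor $c^{-1}$; and the symmetry square collapses to the defining equality $\zls(\za,\zb)=\zl(\za,\zb)\,\zvs(\za,\zb)\,\zvs(\zb,\za)^{-1}$. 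Since $u$ and each $\zt^{u}_{M,N}$ are isomorphisms, $(\widehat{\Ivs},u,\zt^{u})$ is a symmetric (strong) monoidal functor. Third, for the closed-functor claim, applying the adjunction \eqref{etadef} produces the explicit formula
\[
\zh(f)(m) = c^{-1}\,\zvs(\degr{f},\degr{m})\, f(m)\,,
\]
whose $\cuA$-linearity follows from right $\cA$-linearity of $f$ together with biadditivity of $\zvs$, and whose inverse is given by the same expression with $\zvs$ and $c$ inverted; thus $\zh$ is a natural isomorphism.

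The remaining assertion about the composite with $(\widehat{\zF}_{\zls},\id,\id)$ is then formal, since the composite of (closed) symmetric monoidal functors is again one, and this regrading functor has already been noted to carry such a structure. The main technical burden throughout is the sign bookkeeping involving the three factors $\zvs$, $\zl$ and $\zls$, and, for each space in play, the distinction between its $\cA$-action and its twisted $\cuA$-action; once the handful of biadditivity identities and the definition of $\zls$ are isolated, the computations proceed without further obstruction.
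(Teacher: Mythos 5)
Your proposal is correct and follows essentially the same route as the paper: you characterize $u$ by its value $u(1)=c\in(\cA^{0})^{\ts}$, take the twist $\zt(m\ots n)=\zvs(\degr m,\degr n)\,u(1)^{-1}\star(m\ots n)$ (identical to the paper's formula, since $c^{-1}$ has degree $0$ so $\cdot$ and $\star$ coincide on it), deduce $\zh(f)(m)=\zvs(\degr f,\degr m)\,u(1)^{-1}\star f(m)$ via the adjunction \eqref{etadef}, and close by composing with the regrading functor. The paper leaves the coherence checks as ``easily verified''; you spell out that they reduce to biadditivity of $\zvs$ and the identity $\zls=\zl\cdot\zvs\cdot\zvs^{-1}(\text{swap})$, which is a fair elaboration of the same argument rather than a different one.
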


\begin{proof}
The map $u: \und{\cA} \to \und{\cA}$ being an $\cuA$-module morphism, it is completely characterized by $u(1)$, its value on the unit element $1\in\und{\cA}$. By construction, $u(1)$ is necessarily of degree $0$ and invertible.
For any pair $M,N$ of $\cA$-modules, we set
\bemaps
\zt:& \und{M} \bots  \und{N} &\to& \und{M\ots  N}\\
    & m \bots n &\mapsto & \zvs(m,n)\, u(1)^{-1} \star (m\ots n)
\eemaps
where $\und{M}=\widehat{\Ivs}(M)$. This family of morphisms of graded $\cA$-modules is natural both in $M$ and $N$ and $(\widehat{\Ivs},u,\zt)$ is easily checked to be a symmetric monoidal functor.
The induced natural transformation $\zh$ (see \eqref{etadef}) is then given by
\bemaps
\zh:& \und{ \iHom_{\cA}(M,N) } &\to & \iHom_{\und{\cA}}\lp \und{M},\und{N} \rp \\
 & f &\mapsto & \lp \zh(f):\,m\mapsto \zvs(f,m)\, u(1)^{-1} \star f(m)  \rp
\eemaps
which is clearly invertible.

The remaining statement follows from the rule of composition of monoidal and closed functors (see \cite{EK66}).
\end{proof}

As a consequence of Theorem \ref{thm:NSmod} and its proof, we have a closed functor $(\widehat{\Ivs},\id,\zh_{\zvs})\,$,
with $\cuA$-module isomorphisms
\bemap\label{eta}
 \zh_{\zvs}:& \widehat{\Ivs} \lp \iEnd_{\cA}(M)\rp &\raa &\iEnd_{\cuA}(\widehat{\Ivs}(M))\\
 & f& \mapsto &\lp \zh_{\zvs}(f):\,m\mapsto \zvs(f,m)\, f(m)\rp
\eemap
where $f$ and $m$ are homogeneous.
Note that we have $\iEnd_{\cA}(M) = \widehat{\Ivs}\lp \iEnd_{\cA}(M)\rp$ as algebras and as $\zG$-graded $\cA^{0}$-modules, but they admit distinct $\cA$- and $\cuA$-module structures (see \eqref{abarmod}).
A direct computation shows that Equation \eqref{etacomp} holds and  {\bf Theorem \ref{Thm:A}} in the Introduction follows.

\section{Free graded modules and graded matrices}

Throughout this section, $\cA$ is a $(\zG, \zl)$-commutative algebra over $\K$. 
We focus on free graded modules, investigate the notion of rank and transfer the NS-functors $\widehat{\Ivs}$ to algebras of graded matrices.

\subsection{Free graded structures}

We define and build free graded modules and free graded algebras. The latter are constructed from graded tensor algebras.

\subsubsection{Free graded Modules}

Let $S= \sqcup_{\zg\in \zG} S^{\zg}$ be a $\zG$-graded set. We denote by $\la S\ra_{\K}$ the $\zG$-graded vector space  generated by $S$ over $\K$.

The \emph{free graded $\cA$-module generated by $S$} is the tensor product $\la S\ra_{\cA}=\cA\ots_{\K}\la S\ra_{\K}\,$, with $\cA$-module structure given by
$a\cdot (b\ots m)= (ab) \ots m$, for all $a,b\in \cA$ and $m\in \la S\ra_{\K}\,$. The $\zG$-grading of $\la S\ra_{\cA}$ is defined by
$$
(\la S\ra_{\cA})^{\zg}=\bigoplus_{\za+\zb=\zg}\cA^{\za}\ots_{\K}(\la S \ra_{\K})^{\zb}\;,
$$
for all  $\zg\in \zG$.
A \emph{free graded $\cA$-module}  $M$ is a graded $\cA$-module
freely generated by a subset $S\subset \sqcup_{\zg\in\zG}M^{\zg}$, called a \emph{basis} of $M$ (cf. \cite{NOy04} for the notions of free, injective and projective graded modules over $\zG$-algebras).
An element $m\in M$ decomposes in a given basis $(e_i)_{i\in I}$ as follows,
$$
m= \sum_{i\in I} e_i \cdot m^i = \sum_{i\in I} n^i\cdot e_i \, ,
$$
where the map $i\mapsto m^i$ has finite support. Right and left components are linked via the relation (\ref{rlmod}) between left and right $\cA$-module structures. By convention, we only consider right components.
Note that $m\in M$ is homogeneous of degree $\zg$ if and only if its components $m^{i}$ are of degree $\degr{m^{i}}=\zg-\degr{e}_{i}$ for all $i\in I$.

\begin{lem}\label{lem:Ivsbasis}
Let $M$ be a free graded $\cA$-module with basis $S$, and $\zvs$ a NS-multiplier. The image of $M$ under the NS-functor $\widehat{\Ivs}$ is a free graded $\Ivs(\cA)$-module with the same basis $S\,$.
\end{lem}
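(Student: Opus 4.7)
The plan is to show that the set $S$, viewed inside $\und{M} := \widehat{\Ivs}(M)$, is still a basis for the new $\cuA = \Ivs(\cA)$-module structure. Since $\und{M}$ and $M$ coincide as $\zG$-graded $\K$-vector spaces, and $\cuA$ and $\cA$ coincide as $\zG$-graded $\K$-vector spaces, the whole content of the lemma amounts to translating a decomposition with $\cA$-coefficients into a decomposition with $\cuA$-coefficients, with the twist provided by the invertible scalars $\zvs(-,-)\in\K^\ts$.

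First I would make explicit the right $\cuA$-action on $\und{M}$. The formula $a\star m = \zvs(\degr{a},\degr{m})\, a\cdot m$ gives the left action; using $m\star a = \zls(\degr{m},\degr{a})\, a\star m$ together with the definitions $a\cdot m = \zl(\degr{a},\degr{m})\, m\cdot a$ and $\zls(x,y)=\zl(x,y)\zvs(x,y)\zvs(y,x)^{-1}$ from Section 1.3, a short computation yields the twisted right action
\[
 m\star a \;=\; \zvs(\degr{m},\degr{a})\, m\cdot a
\]
on homogeneous elements, the scalar $\zvs(\degr{m},\degr{a})\in\K^\ts$ being invertible.

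Next I would use this to rewrite the canonical $\cA$-decomposition of an element. Given $m\in M$, freeness provides a unique finite expansion $m=\sum_i e_i\cdot m^i$ with $m^i\in\cA$, which may be assumed homogeneous. Substituting $e_i\cdot m^i = \zvs(\degr{e_i},\degr{m^i})^{-1}\, e_i\star m^i$ yields
\[
 m \;=\; \sum_i e_i \star n^i, \qquad n^i := \zvs(\degr{e_i},\degr{m^i})^{-1}\, m^i \;\in\; \cuA,
\]
so $S$ generates $\und{M}$ as a graded $\cuA$-module. Conversely, starting from any $\cuA$-expansion $m=\sum_i e_i\star n^i$ with $n^i\in\cuA$ homogeneous, the inverse substitution $e_i\star n^i = \zvs(\degr{e_i},\degr{n^i})\, e_i\cdot n^i$ produces an $\cA$-expansion, and uniqueness in $M$ forces uniqueness of the $n^i$. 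Hence $S$ is a basis of $\und{M}$ over $\cuA$.

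There is no real obstacle here, since the argument reduces to a change of coordinates by invertible scalars; the only thing to be careful about is the bookkeeping of left versus right actions and the precise formula for $\zls$ so that the twisted right action takes the simple form used above. Once that identity is verified, the bijective correspondence $(m^i)\leftrightarrow(n^i)$ transports freeness from $M$ to $\und{M}$ directly.
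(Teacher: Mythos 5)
Your proof is correct and follows essentially the same approach as the paper's: you perform the same change of coefficients $m^i \mapsto \zvs(\degr{e}_i,\degr{m}^i)^{-1} m^i$ to pass between $\cA$- and $\cuA$-expansions in the basis $S$. The paper states this substitution in one line and leaves the right-action formula and the converse uniqueness check implicit, whereas you spell both out; that is just added care, not a different argument.
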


\begin{proof}
By construction, $\Ivs:\,\cA=(A,\cdot) \mapsto \cuA=(A,\star )$ changes the product, ${a\star\, b:=\zvs(\degr{a},\degr{b})\, a\cdot b}\,$,
and $\widehat{\Ivs}:\, M \mapsto \widehat{\Ivs}(M)$ changes the graded  $\cA$-module structure into the graded $\cuA$-module structure $a\star m := \zvs(\degr{a},\degr{m}) \, a\cdot m\,$.
The decomposition of an element $m\in M$ in a basis $(e_{i})_{i\in I}$ varies accordingly:
$$ m=\sum_{i\in I} e_i\cdot m^i = \sum_{i\in I} e_i \star \lp \zvs(\degr{e}_i,\degr{m}^i)^{-1} m^i\rp\;,$$
so that $(e_{i})_{i\in I}$ is also a basis of the module $\widehat{\Ivs}(M)\,$.
\end{proof}

\subsubsection{Tensor algebras}

In this paragraph, we follow  \cite{Sch,Sch83}. The tensor algebra of a graded $\cA$-module $M$ is an $\cA$-module and an algebra,
$$  \mathrm{T}(M):= \bigoplus_{k\in \N} M^{\ots k} = \cA \opl M \opl (M \ots_{\cA} M) \opl (M \ots_{\cA} M  \ots_{\cA} M) \opl \ldots\;, $$
with multiplication given by the tensor product over $\cA\,$. The algebra $\mathrm{T}(M)$ is more precisely a $(\N \ts \zG)$-algebra with the following gradings:
\begin{itemize}
    \item the $\N$-grading, called \emph{weight}, given by the number of factors in $M\,$;
    \item the $\zG$-grading, called \emph{degree},  induced by the $\zG$-grading of the module $M$
    (see (\ref{grmodtens})).
\end{itemize}

\medskip
The \emph{graded symmetric algebra} on $M$ is the $\cA$-module and $(\zG,\zl)$-commutative algebra
$$\bigvee M:= \EnsQuot{\mathrm{T} (M)}{\cI}\;,$$
where $\cI$ is the homogeneous ideal of $\mathrm{T}(M)$ generated by the $\zG$-graded commutators
$$ [v,w]_\zl:=v\ots_{\cA} w - \zl(\degr{v}, \degr{w})\, w\ots_{\cA} v\;,\quad v,w\in M\; .$$
Analogously, the \emph{graded exterior  algebra} on $M$ is the $\cA$-module and $\zG$-algebra
$$\bigwedge\nolimits M := \EnsQuot{\mathrm{T}( M)}{\cJ}\;,$$
where $\cJ$ is the homogeneous ideal of $\mathrm{T}( M)$ generated by the $\zG$-graded anti-commutators
$$ v\ots_{\cA} w + \zl(\degr{v}, \degr{w})\, w\ots_{\cA} v\;,\quad v,w\in M\; .$$
\subsubsection{Free graded algebras}\label{freeAlg}
Let $S= \sqcup_{\zg\in \zG} S^{\zg}$ be a $\zG$-graded set and $\zl$ be a commutation factor over the grading group $\zG$.

We view $\K$ as a $(\zG,\zl)$-commutative algebra concentrated in degree $0$ so that definitions of the previous section apply to the $\zG$-graded $\K$-vector space $\la S\ra_{\K}\,$.
The \emph{free} \emph{$\zG$-algebra} generated by $S$ is the tensor algebra $\mathrm{T}( \la S\ra_{\K})\,$.
The \emph{free $(\zG,\zl)$-commutative algebra} generated by $S$ is the graded symmetric algebra
\be\label{poly} \K[S] := \bigvee \la S\ra_{\K}\;,\ee
which is the algebra of polynomials in the graded variables $(X_{i})_{i\in S}$.

We denote by $\overline{\la S\ra}_{\K}$ the vector space $ \la S\ra_{\K}$ with reversed grading
$$ \lp  \overline{\la S\ra}_{\K} \rp^{\zg} := \lp  \la S\ra_{\K} \rp^{-\zg} \;,\quad \text{for all } \zg\in\zG\;.$$
If $(\zG,\zl)$ is such that $\zG\odp=\emptyset$, we define the algebra of Laurent polynomials in graded variables $X_{i}\in S$ by
\be\label{Blambda}
\K[S,S^{-1}]_{\zl} := \EnsQuot{\lp \bigvee \la S\ra_{\K} \ots_{\K} \bigvee \overline{\la S\ra}_{\K}\rp}{\;\cI}
\ee
where $\cI$ is the ideal generated by the elements $(X_{i}\ots \overline{X}_{i})-1$, for $i=1,\ldots ,r$.
According to Remark \ref{algebratensoralgebra}, the space $\K[S,S^{-1}]_{\zl}$ is a $(\zG,\zl)$-commutative algebra, which is clearly a crossed product.

\subsection{Basis of free graded modules}\label{Sec:rank}
Let $(e_{i})_{i=1, \ldots , n}$ be a basis of a free and finitely generated graded $\cA$-module $M$.
The \emph{rank} of $(e_{i})$ is $n\in \N$, the \emph{degree} of $(e_{i})$ is the $n$-vector
$$\boldsymbol{\zn}:=(\degr{e}_{1},\ldots,\degr{e}_{n}) \in \zG^{n},$$
which depends on the order of the basis.
The sequence $(r_{\zg})_{\zg \in \zG}\in \N^{|\zG|}$,
where
$$ r_{\zg}= \sharp \{ e_{i} \;|\; \degr{e}_{i} =\zg \,,\; i=1, \ldots  ,n\}\,,$$
is called the \emph{$\zG$-rank} of $(e_{i})$. Obviously, we have $\sum_{\zg\in\zG}r_{\zg}=n$. Two bases have same $\zG$-rank if and only if they have the same degree up to permutation.

The underlying $\Z_{2}$-grading of $\zG$ 
(see \eqref{G0G1}), induces a $\Z_{2}$-rank, also called \emph{superrank}, 
$$(r\evp, r\odp) = \lp \sum\nolimits_{\zg \in \zG\evp } r_{\zg} \;, \;\sum\nolimits_{\zg \in \zG\odp } r_{\zg} \rp\;.$$

\medskip
The rank and superrank are
 invariants of the graded module $M$.
\begin{prop}\label{prop:superrank}
Let $M$ be a free and finitely generated graded $\cA$-module. Any two bases of $M$ have the same rank and superrank, henceforth defining the rank and superrank of $M$.
\end{prop}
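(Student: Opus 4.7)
The plan is to reduce to the classical fact that a nonzero commutative ring has invariant basis number, by first trading $\cA$ for a supercommutative algebra via Nekludova-Scheunert, then killing the odd part to obtain a classical commutative base ring.

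First, fix any $\zvs\in\fS(\zl)$. Composing $\Ivs$ with the regrading $\widehat{\zF}_{\zls}$ yields a $\zG$-graded supercommutative algebra $\cuA$ together with the functor $\widehat{\Ivs}:\Moda\to\Modabar$. Since $\zvs$ is a NS-multiplier, $\zvf_{\zls}=\zvf_{\zl}$, so the underlying parity is the same on both sides. By Lemma \ref{lem:Ivsbasis}, a $\zG$-graded subset $S\subset M$ is a basis of $M$ if and only if it is a basis of $\widehat{\Ivs}(M)$, and the rank and superrank of $S$ coincide in both modules. We may therefore assume $\cA$ itself is $\zG$-graded supercommutative.

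Second, construct a commutative quotient. Let $\cJ$ be the two-sided ideal of $\cA$ generated by $\cA\odp$. In the supercommutative case, $\cA\evp\cdot\cA\odp=\cA\odp$ and $\cA\odp\cdot\cA\odp\subset\cA\evp$, so $\cJ=\cA\odp+(\cA\odp)^{2}$. Odd elements square to zero and $\cA\evp$ is commutative, hence every element of $\cJ$ is nilpotent; in particular $1\notin\cJ$, and $R:=\cA/\cJ$ is a nonzero classical commutative $\K$-algebra.

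Third, extract the invariants from $M/\cJ M$. For any basis $(e_{i})_{i=1,\dots,n}$ of $M$ of degrees $\boldsymbol{\zn}$, the direct-sum decomposition $M=\bigoplus_{i}e_{i}\,\cA$ gives $\cJ M=\bigoplus_{i}e_{i}\,\cJ$, whence $M/\cJ M\cong R^{n}$ as $R$-modules. Since $R$ is a nonzero commutative ring, free $R$-modules have a well-defined rank; thus $n$ is determined by $M$ alone, proving the rank part. For the superrank, use the intrinsic parity decomposition $M=M\evp\oplus M\odp$ (induced by $\zvf_{\zl}$). For each basis vector $e_{i}$: if $\pari{e_{i}}=0$, then $e_{i}\,\cA\evp$ meets $e_{i}\,\cJ$ in $e_{i}\,(\cA\odp)^{2}$, contributing a copy of $e_{i}R$ to $M\evp/(M\evp\cap\cJ M)$; if $\pari{e_{i}}=1$, then $e_{i}\,\cA\odp\subset e_{i}\,\cJ$, contributing $0$. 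Summing over $i$ gives $M\evp/(M\evp\cap\cJ M)\cong R^{r\evp}$, and by symmetry $M\odp/(M\odp\cap\cJ M)\cong R^{r\odp}$. Both $r\evp$ and $r\odp$ are therefore invariants of $M$.

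The main obstacle is the bookkeeping in the last step: the ideal $\cJ$ is not homogeneous for the parity grading on $\cA$ (it contains both $\cA\odp$ and $(\cA\odp)^{2}\subset\cA\evp$), so one cannot simply pass to parity components after quotienting; one must intersect $\cJ M$ with $M\evp$ and $M\odp$ before computing the quotient. Everything else is routine commutative algebra once the reduction to the supercommutative case has been carried out.
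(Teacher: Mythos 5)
Your argument is correct, and the first step is exactly the paper's proof: reduce to the supercommutative case via the NS-functor, using Lemma~\ref{lem:Ivsbasis} to see that a basis of $M$ is also a basis of $\widehat{\Ivs}(M)$ with unchanged rank and superrank. Where the paper stops and simply cites the supermodule case from Varadarajan's book, you go on to prove that fact directly by reducing modulo the ideal $\cJ$ generated by the odd part; this is essentially the standard proof behind the citation, and your computation $M\evp/(M\evp\cap\cJ M)\cong R^{r\evp}$, $M\odp/(M\odp\cap\cJ M)\cong R^{r\odp}$ correctly extracts both components of the superrank as ranks of free modules over the nonzero commutative ring $R=\cA/\cJ$. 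So this is not a different route, but a self-contained expansion of the one the paper uses.

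One misstatement in your closing commentary: $\cJ=\cA\odp\oplus(\cA\odp)^{2}$ \emph{is} homogeneous for the parity grading, since it is the direct sum of its odd part $\cA\odp$ and its even part $(\cA\odp)^{2}$. (Containing both even and odd elements does not make an ideal inhomogeneous; what matters is that it splits as the direct sum of its parity components, which it does.) Consequently the quotient $M/\cJ M$ carries an induced parity grading, and one \emph{can} simply pass to parity components after quotienting: $(M/\cJ M)\evp = M\evp/(M\evp\cap\cJ M)$ automatically because $\cJ M$ is a parity-homogeneous submodule. Your ``intersect first, then quotient'' computation is therefore correct but is not circumventing an actual obstacle; the bookkeeping is even easier than you suggest. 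This does not affect the validity of the proof.
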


\begin{proof}
Let $M$ be a free graded $\cA$-module with two bases $(e_{i})$ and $(e'_{j})$.
Denote by  $\zvs \in \f{S}(\zl)$ a NS-multiplier.
By Lemma \ref{lem:Ivsbasis}, $(e_{i})$ and $(e'_{j})$ are then bases of the graded $\Ivs(A)$-module $\widehat{\Ivs}(M)\,$, with unchanged superranks.
Since $\widehat{\Ivs}(M)$ is a supermodule over the supercommutative algebra  $\Ivs(\cA)$, the superrank of all the bases of $\widehat{\Ivs}(M)$ are equal (see e.g. \cite[p. 114]{Var}). Hence, $(e_{i})$ and $(e'_{j})$ have the same superrank.
\end{proof}

For the $\zG$-rank, the situation is more involved and depends on the algebra $\cA\,$.
A Theorem of Dade \cite{Dad80} yields the following result.
\begin{prop}\label{DadeProp}
Let $M$ and $N$ be two free and finitely generated graded $\cA$-modules, with $\cA$ a strongly graded algebra. Then,
$M$ and $N$ are isomorphic if and only if they have same rank.
\end{prop}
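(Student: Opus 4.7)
The plan is to invoke Dade's theorem \cite{Dad80} for strongly $\zG$-graded rings, which asserts an equivalence of categories $\Moda \simeq \Mod_{\cA^{0}}$ via the degree-zero functor $M \mapsto M^{0}$, with quasi-inverse $N \mapsto \cA \ots_{\cA^{0}} N$ (suitably graded by pushing the $\cA$-grading onto the tensor product).

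The ``only if'' direction is immediate from Proposition \ref{prop:superrank}: the total rank, being an invariant of a free graded $\cA$-module, is preserved by any isomorphism in $\Moda$.

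For the converse, I would pick homogeneous bases $(e_{i})_{i=1}^{n}$ of $M$ with degrees $\boldsymbol{\zn}=(\zn_{1},\ldots,\zn_{n})$ and $(f_{j})_{j=1}^{n}$ of $N$ with degrees $\boldsymbol{\zm}=(\zm_{1},\ldots,\zm_{n})$, and translate the question into the ungraded setting through Dade's equivalence. A direct computation of the degree-zero parts yields
\[
M^{0} = \bigoplus_{i=1}^{n} \cA^{-\zn_{i}} \quad \text{and} \quad N^{0} = \bigoplus_{j=1}^{n} \cA^{-\zm_{j}}
\]
as $\cA^{0}$-modules. The strong grading hypothesis implies that each homogeneous component $\cA^{\zg}$ is an invertible $\cA^{0}$-module, with inverse $\cA^{-\zg}$ given by multiplication. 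Combined with the further structural results in \cite{Dad80} on strongly graded rings, this lets one conclude that the $\cA^{0}$-isomorphism class of such a direct sum of shifts depends only on the number $n$ of summands, and not on the chosen degrees. Hence $M^{0} \simeq N^{0}$ in $\Mod_{\cA^{0}}$, and applying the quasi-inverse functor $\cA \ots_{\cA^{0}} -$ brings this back to the desired isomorphism $M \simeq N$ in $\Moda$.

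The main obstacle is precisely the step showing that $\bigoplus_{i=1}^{n} \cA^{-\zn_{i}}$ is determined, up to $\cA^{0}$-isomorphism, by $n$ alone. This is not a formal consequence of the invertibility of each summand (which only says the class of $\cA^{\zg}$ lies in the Picard group of $\cA^{0}$); it must be extracted from the cancellation properties enjoyed by direct sums of the bimodules $\cA^{\zg}$ under the strong grading identity $\cA^{\za}\cA^{\zb}=\cA^{\za+\zb}$, and relies essentially on Dade's framework in \cite{Dad80} rather than on purely formal manipulations with the equivalence of categories.
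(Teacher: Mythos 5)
The paper offers no argument for this proposition; it attributes it directly to Dade \cite{Dad80}, so there is no detailed proof here to compare against.

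Your reduction via Dade's equivalence $(-)^{0}\colon\Moda \xrightarrow{\sim} \Mod_{\cA^0}$ is the natural route, and the identification $M^0 \simeq \bigoplus_{i=1}^{n}\cA^{-\nu_i}$ is correct. But the step you flag and leave open --- that this $\cA^0$-module depends, up to isomorphism, only on the number $n$ of summands --- is a genuine gap, and it is precisely where the content of the proposition lives. Each $\cA^{\gamma}$ is an invertible $\cA^0$-module, so its class lies in the Picard group of $\cA^0$, and a direct sum of rank-one projectives is in general \emph{not} determined by the number of summands: over a Dedekind ring it is determined by the rank together with the Steinitz class $\prod_i[\cA^{-\nu_i}]$, and nothing in your argument makes this product independent of $\boldsymbol{\nu}$. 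Concretely, let $R=\R[x,y]/(x^2+y^2-1)$, a Dedekind $\R$-algebra, let $I=(1-x,y)$ be its non-principal maximal ideal at the point $(1,0)$, and let $\cA = \bigoplus_{k\in\Z} I^{k}$ (with $I^{-k}$ the inverse fractional ideal for $k>0$). This is a strongly $\Z$-graded commutative $\R$-algebra with $\cA^0=R$ and trivial commutation factor. The free graded $\cA$-modules of rank $1$ with a basis vector in degree $0$ and in degree $-1$, respectively, have degree-zero parts $R$ and $I$, which are non-isomorphic $R$-modules; so these two modules are not isomorphic in $\Moda$, though they have the same rank. The missing step therefore does not follow from the hypotheses as stated: one needs either an extra hypothesis (e.g.\ $\cA^0$ local, which as the paper itself notes forces a strongly graded $\cA$ to be a crossed product, making every $\cA^\gamma$ free of rank one over $\cA^0$), or else a precise identification of the statement in \cite{Dad80} being invoked, together with a check that its hypotheses match those assumed here.
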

Equivalently, if $\cA$ is strongly graded, a free graded $\cA$-module of rank $n\in\mathbb{N}$ admits bases of any degree $\boldsymbol{\nu}\in\zG^n$. As the grading group of a strongly graded algebra satisfies $\zG_{\bar{1}}=\emptyset$, rank and superrank are equal for its graded modules.
Set
$$
\zG_{\!\cA^{\ts}}:=\{ \zg \in \zG \;|\; \cA^{\zg} \cap \cA^{\ts} \neq \emptyset \},
$$
where $\cA^\times$ is the subgroup of invertible elements. If the algebra $\cA$ is strongly graded and  $\cA^0$ is a local ring, then $\cA$ is a crossed product (see \cite{Dad80}) and $\zG_{\!\cA^{\ts}}=\zG$. In general, $\zG_{\!\cA^{\ts}}$ is a subgroup of $\zG$ and $\zG_{\!\cA^{\ts}}\cap\zG_{\odp}=\emptyset$, as odd elements of $\cA$ are nilpotent.
Admissible degrees and $\zG$-ranks of bases of $M$ can be determined by studying the translation action of $(\zG_{\!\cA^{\ts}}\!\!)^{n}$ on $\zG^{n}$. To our knowledge, the point (ii) in the following proposition is new.

\begin{prop}\label{prop:rank}
Let $M$ be a free graded $\cA$-module of rank $n$, admitting a basis of degree $\boldsymbol{\zm}\in\;\zG^n$,
\begin{enumerate}[label=(\roman *)]
	\item For all $\boldsymbol{\zn}\in\; (\zG_{\!\cA^{\ts}}\!\!)^{n}+ \boldsymbol{\zm}$, there exists a basis $(f_{i})$ of $M$ of degree $\boldsymbol{\zn}\,$;
	\item 
Up to reordering its elements, the degree $\boldsymbol{\zn}$ of any basis of $M$ satisfies $\boldsymbol{\zn} \in\; (\zG_{\!\cA^{\ts}}\!\!)^{n}+ \boldsymbol{\zm}\,$, if $\cA^0$ is a local ring.
\end{enumerate}
\end{prop}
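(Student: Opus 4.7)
For part (i), the plan is straightforward: given a basis $(e_i)$ of degree $\boldsymbol{\zm}$, I pick for each $i$ an invertible homogeneous element $a_i\in\cA^{\zn_i-\zm_i}$, which exists by the hypothesis $\zn_i-\zm_i\in\zG_{\!\cA^{\ts}}$, and set $f_i:=e_i\cdot a_i$. Homogeneity gives $\degr{f_i}=\zn_i$, and the invertibility of the scalars $a_i$ allows me to invert the change of basis componentwise, so $(f_i)$ is again a basis of $M$.

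For part (ii), assume $\cA^0$ is local with maximal ideal $\f{m}$. Given two bases $(e_i)$ and $(f_j)$ of degrees $\boldsymbol{\zm}$ and $\boldsymbol{\zn}$, I consider the transition matrix $P=(P^i_j)$ defined by $f_j=\sum_i e_i P^i_j$, and its inverse $Q=P^{-1}$. By homogeneity, $P^i_j$ has degree $\zn_j-\zm_i$ and $Q^k_i$ has degree $\zm_i-\zn_k$, so the diagonal identity $1=\sum_k P^i_k Q^k_i$ coming from $PQ=I$ holds in $\cA^0$, each summand being of degree $0$. Since $\cA^0$ is local, at least one summand $P^i_{k_i}Q^{k_i}_i$ lies outside $\f{m}$, hence is a unit of $\cA^0$.

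The key auxiliary lemma I would then establish is that for homogeneous $a\in\cA^\za$ and $b\in\cA^{-\za}$, if $ab\in(\cA^0)^\ts$ then $a$ is invertible in $\cA$. Indeed, $b(ab)^{-1}$ is a right inverse of $a$, and using $\zl$-commutativity together with the centrality of $\cA^0$ one computes $(b(ab)^{-1})\,a=\zl(-\za,\za)\in\K^\ts$, providing a left inverse up to a nonzero scalar. Applied to $(a,b)=(P^i_{k_i},Q^{k_i}_i)$, this gives for every $i$ an index $k_i$ with $P^i_{k_i}\in\cA^\ts$, and therefore $\zn_{k_i}-\zm_i\in\zG_{\!\cA^{\ts}}$.

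To upgrade this into a bijection, I induct on $n$, the case $n=1$ being immediate. After permuting the components of $\boldsymbol{\zn}$, I may assume $P^1_1\in\cA^\ts$, giving $\zn_1-\zm_1\in\zG_{\!\cA^{\ts}}$. A graded Gauss elimination in the $f$-basis, setting $f'_1:=f_1$ and $f'_j:=f_j-f_1(P^1_1)^{-1}P^1_j$ for $j\geq 2$, yields a new basis of $M$ of the same degree $\boldsymbol{\zn}$ and brings the transition matrix to the block lower triangular form $P'=\begin{pmatrix}P^1_1 & 0\\ * & S\end{pmatrix}$; invertibility of $P'$ forces the $(n-1)\times(n-1)$ block $S$ to be invertible, and its entries $S^i_j$ are homogeneous of degree $\zn_j-\zm_i$. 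I then view $S$ as the transition matrix between two bases, of degrees $(\zm_2,\ldots,\zm_n)$ and $(\zn_2,\ldots,\zn_n)$, in the auxiliary free graded $\cA$-module generated by symbols of degrees $(\zm_2,\ldots,\zm_n)$, to which the inductive hypothesis applies, supplying a permutation of $\{2,\ldots,n\}$ with the required property; composing with the initial column permutation concludes.

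The main technical obstacles I anticipate are (a) the small lemma on invertibility of one factor of a homogeneous invertible product, which requires tracking the scalars produced by $\zl$-commutativity to pass from a one-sided to a two-sided inverse, and (b) the verification that the Gauss-style elimination preserves the homogeneity and produces a genuinely invertible Schur complement $S$ that can serve as the transition matrix between two bases of a rank-$(n-1)$ free graded module. Neither is deep, but both rely on careful degree bookkeeping for the induction to be legitimate.
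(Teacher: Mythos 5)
Your proof is correct and takes a genuinely different route from the paper's. For part (ii), the paper applies the Nekludova--Scheunert functor to pass to a supercommutative algebra $\cuA=\Ivs(\cA)$, quotients by a maximal homogeneous ideal $I$ so that the transition matrix lives over the graded division algebra $\cuA/I$, and invokes Lemma~\ref{lem:maxhomideal} to lift nonzero homogeneous quotient elements back to units of $\cuA$, hence of $\cA$; the existence of a simultaneous permutation placing nonzero entries on the diagonal over $\cuA/I$ is asserted only as ``up to reordering.'' You instead work entirely inside $\cA$: you exploit locality of $\cA^0$ directly on the degree-$0$ identity $1=\sum_k P^i_k Q^k_i$, establish the auxiliary lemma that a homogeneous factor of a homogeneous unit is itself a unit (correct; the scalar is $\zl(-\za,\za)=\zl(\za,\za)^{-1}\in\{\pm 1\}$, and one can also bypass the $\zl$-bookkeeping by noting that $ra$ is a nonzero idempotent in the local commutative ring $\cA^0$ and hence equals $1$), and then extract an actual permutation by graded Gauss elimination plus induction on $n$, after checking the degree bookkeeping and the invertibility of the Schur complement $S$. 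The two arguments are of comparable length once the permutation step in the paper is spelled out; yours avoids the NS-functor and maximal-homogeneous-ideal machinery entirely and is more explicit about where the permutation comes from, while the paper's reduction to a graded division algebra elegantly reuses the categorical tools the paper has already set up. Part (i) is handled identically in both.
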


In order to prove this, we need a definition and a lemma.

A \emph{maximal homogeneous} ideal is a proper homogeneous ideal $I$ such that any other homogeneous ideal containing $I$ is equal to $\cA$.

\begin{lem}\label{lem:maxhomideal}
Let $\zvs\in\f{S}(\zl)$, and $\cuA$ be a $(\zG,\zls)$-commutative algebra with $\cuA^{0}$ a local ring.
If $I$ is a maximal homogeneous ideal of $\cuA$ and $I^\zg=I\cap\cA^\zg$,
then we get 
 $\cuA^{\zg}\setminus I^{\zg}=\cuA^{\zg}\cap\cuA^\ts$ for all $\zg\in\zG$.
\end{lem}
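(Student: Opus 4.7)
The plan is to prove the two inclusions separately. The inclusion $\cuA^{\zg}\cap\cuA^{\ts}\subseteq \cuA^{\zg}\setminus I^{\zg}$ is immediate: any invertible element is outside every proper ideal, and $I$ is proper by maximality.

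For the reverse inclusion, I would take $a\in\cuA^{\zg}\setminus I^{\zg}$ and consider the two-sided ideal $I+\cuA a$. Since $\cuA$ is $(\zG,\zls)$-commutative, every homogeneous $b\in\cuA$ satisfies $ba=\zls(\degr{b},\zg)\,ab$, so $\cuA a=a\cuA$; this is a homogeneous ideal, and adding it to the homogeneous ideal $I$ keeps things homogeneous. Because $a\notin I$, this ideal strictly contains $I$, so by maximality $I+\cuA a=\cuA$. Writing $1=j+xa$ with $j\in I$ and $x\in\cuA$ and taking the degree-$0$ component yields
\[
 1=j^{0}+x^{-\zg}a,\qquad j^{0}\in I^{0}.
\]
Here I invoke the hypothesis on $\cuA^{0}$: the restriction $I^{0}=I\cap\cuA^{0}$ is a proper ideal of the local ring $\cuA^{0}$ (proper because $1\notin I$), hence contained in its unique maximal ideal. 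Consequently $1-j^{0}=x^{-\zg}a$ is invertible in $\cuA^{0}$, and setting $b:=(1-j^{0})^{-1}x^{-\zg}\in\cuA^{-\zg}$ gives a left inverse $ba=1$.

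To upgrade this to a genuine two-sided inverse, apply the commutation rule: $ab=\zls(\zg,-\zg)\,ba=\zls(\zg,-\zg)$. Using $\zls(\zg,0)=1$ and the multiplicativity \nn{cf3}, one has $\zls(\zg,-\zg)=\zls(\zg,\zg)^{-1}\in\{\pm 1\}$. When $\zg\in\zG_{\bar 0}$ this factor is $1$, so $ab=1$ and $a\in\cuA^{\ts}$, as wanted.

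The remaining case, and the only genuinely delicate point, is $\zg\in\zG_{\bar 1}$. Since $\zls$ factors through the parity (because $\zvs\in\fS(\zl)$), any odd element $a$ satisfies $a\cdot a=-a\cdot a$; as $\op{char}\K=0$ this forces $a^{2}=0$. From $ba=1$ one then derives $a=ba\cdot a=b\cdot a^{2}=0$, contradicting $a\notin I$. Hence for odd $\zg$ the set $\cuA^{\zg}\setminus I^{\zg}$ is empty; the same nilpotency argument shows $\cuA^{\zg}\cap\cuA^{\ts}=\emptyset$, so the claimed equality still holds trivially. Putting the even and odd cases together proves the lemma. The main obstacle is precisely the need to treat this odd case, but the nilpotency of odd homogeneous elements—built into the fact that $\zls$ factors through the parity—disposes of it cleanly.
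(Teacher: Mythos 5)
Your proof is correct and follows the same route as the paper: use maximality of $I$ to write $1=j+xa$, project to degree~$0$, and invoke locality of $\cuA^{0}$ to extract a left inverse for $a$. You go further than the paper in two useful places where the paper is terse --- you explicitly promote the one-sided inverse to a two-sided one via $\zls(\zg,-\zg)$, and you dispatch the odd case through nilpotency of odd homogeneous elements --- but the underlying argument is the same.
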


\begin{proof}[Proof of Lemma \ref{lem:maxhomideal}]
Let $I$ be a maximal homogeneous ideal of $\cuA$.
As $I$ is homogeneous, the algebra $\EnsQuot{\cuA}{I}$ is $\zG$-graded and $\lp \EnsQuot{\cuA}{I}\rp^{\zg}= \EnsQuot{\cuA^{\zg}}{I^{\zg}}\;$.
 Assume that $a\in \cuA^{\zg}\setminus I^{\zg}\,$. The ideal $(a)+I$ is homogeneous, hence $(a)+I=\cuA$ by maximality of $I\,$. This means there exists $b=\sum_{\za\in\zG}b_{\za}\in \cuA$ such that $ba+I=1+I\,$.
Projecting onto the $0$-degree part, we get $1-(b_{-\zg})a\in I^{0}\,$.
Since $\cuA^{0}$ is a local ring, $1-(b_{-\zg})a$ is also contained in the maximal ideal of $\cA^0$ and then $a$ is invertible in $\cA\,$. This means $\cuA^{\zg}\setminus I^{\zg}\subset \cuA^{\zg}\cap\cuA^\ts$ and the converse inclusion is obvious. Moreover, as $a$ is invertible in $\cA$, $a+I$ is invertible in $\cuA/I\,$.
\end{proof}
\begin{proof}[Proof of Proposition \ref{prop:rank}]
(i) Let $(e_i)$ be a basis of $M$ of degree $\boldsymbol{\zm}\in\zG^n$ and $\boldsymbol{\zm}'=(\zm_{i}')\in (\zG_{\!\cA^{\ts}}\!\!)^{n}$. We can choose elements $a_i \in \cA^{\zm_{i}'}\cap \cA^{\ts}$ and set $ f_i:=e_ia_i $ for every  $i=1, \ldots , n$.
Then, $(f_i)$ form a basis of $M$ of degree $\boldsymbol{\zn}=\boldsymbol{\zm}+\boldsymbol{\zm}'$.

\smallskip
(ii) Let $(e_{i})$ and $(f_{i})$ be two bases of $M$ of degrees $\boldsymbol{\zm},\boldsymbol{\zn}\in\zG^n$ respectively.
Let $\zvs\in\f{S}(\zl)\,$.
According to Lemma \ref{lem:Ivsbasis},
 the module $\und{M}:=\widehat{\Ivs}(M)$  is a free graded module over the $(\zG,\zl^\zvs)$-commutative algebra $\cuA:=\Ivs(\cA)$, with same bases $(e_{i})$ and $(f_{i})\,$.

Let $I$ be a maximal homogeneous ideal of $\cuA\,$.
Then, the quotient $\EnsQuot{\und{M}}{I\und{M}}$ is a graded $\lp\EnsQuot{\cuA}{I}\rp$-module. Furthermore, the bases $(e_{i})$ and $(f_{i})$ of $\und{M}$ induce bases $([e_{i}])$ and $([f_{i}])$ of $\EnsQuot{\und{M}}{I\und{M}}\,$, with unchanged degrees.
We have $[f_{i}]=\sum_{j}a_{ij}[e_{i}]\,$, with at least one non-vanishing coefficient. Up to reordering, we can suppose $a_{ii}\in \lp\EnsQuot{\cuA}{I}\rp\setminus \{0\}\,$.
Since $a_{ii}$ is homogeneous, of degree $\degr{a}_{ii}=\degr{f}_{i}-\degr{e}_{i}\,$, there is an invertible element of same degree in $\cuA$, by Lemma \ref{lem:maxhomideal}.
This means $\degr{f}_{i}-\degr{e}_{i} \in  \zG_{\!\cuA^{\ts}}\,$.  As homogeneous invertible elements of $\cA$ and $\cuA$ coincide, we get $\boldsymbol{\zn} \in\; (\zG_{\!\cA^{\ts}}\!\!)^{n}+\boldsymbol{\zm}\,$, up to reordering entries of $\boldsymbol{\zn}$.
\end{proof}

\medskip
As a direct consequence of (i) in Proposition \ref{prop:rank}, we get the following result.
\begin{cor}\label{cor:GAstar}
Let $\cA$ be a $(\zG,\zl)$-commutative algebra whose even part is a crossed product, i.e. $\zG_{\cA^{\ts}}=\zG\evp\,$. Then, two free graded $\cA$-modules are isomorphic if and only if they have same superrank.
\end{cor}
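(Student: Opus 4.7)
The forward direction (``only if'') is immediate from Proposition \ref{prop:superrank}: any isomorphism of graded $\cA$-modules carries a basis of $M$ to a basis of $N$ while preserving $\zG$-degrees, hence in particular the superrank. The content of the corollary is therefore the converse implication.

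For the converse, suppose $M$ and $N$ are free graded $\cA$-modules with equal superrank $(r\evp,r\odp)$, and set $n=r\evp+r\odp$. Pick bases $(e_i)_{i=1}^n$ of $M$ and $(f_j)_{j=1}^n$ of $N$ of respective degrees $\boldsymbol{\zm}=(\zm_1,\ldots,\zm_n)$ and $\boldsymbol{\zn}=(\zn_1,\ldots,\zn_n)$ in $\zG^n$. Because the numbers of even and odd basis elements in $(e_i)$ and $(f_j)$ agree, one can permute the indices of $(f_j)$ so that $\zm_i$ and $\zn_i$ have the same parity for every $i$, i.e. $\zn_i-\zm_i\in\zG\evp$. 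Invoking the standing hypothesis $\zG_{\!\cA^{\ts}}=\zG\evp$, this rewrites as $\boldsymbol{\zn}\in(\zG_{\!\cA^{\ts}})^n+\boldsymbol{\zm}$.

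The plan is then to feed this into Proposition \ref{prop:rank}(i): since $M$ has a basis of degree $\boldsymbol{\zm}$ and $\boldsymbol{\zn}-\boldsymbol{\zm}\in(\zG_{\!\cA^{\ts}})^n$, there exists a basis $(e'_i)$ of $M$ of degree $\boldsymbol{\zn}$. One concludes by defining the unique $\cA$-linear map $\zvf:M\to N$ sending $e'_i\mapsto f_i$ for each $i$; this map is degree-preserving since $\degr{e'_i}=\zn_i=\degr{f_i}$, and it is an isomorphism of graded $\cA$-modules because it sends a basis to a basis.

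There is essentially no obstacle: all the heavy lifting has already been done in Proposition \ref{prop:rank}(i), whose proof rests on choosing invertible homogeneous elements of the prescribed degrees. The only mildly delicate point is the permutation step, which is harmless because the corollary is about isomorphism of modules rather than about any fixed ordering of bases; once the parities are matched entry by entry, the hypothesis $\zG_{\!\cA^{\ts}}=\zG\evp$ converts the superrank equality directly into the degree condition required by Proposition \ref{prop:rank}(i).
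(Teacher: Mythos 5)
Your proof is correct and follows exactly the route the paper intends: the paper states the corollary is "a direct consequence of (i) in Proposition \ref{prop:rank}" and leaves the verification to the reader, and your argument simply fills in that verification (forward direction from Proposition \ref{prop:superrank}, converse by permuting $(f_j)$ to match parities, applying Proposition \ref{prop:rank}(i) to get a basis of $M$ of degree $\boldsymbol{\zn}$, and sending basis to basis). No gaps.
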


Note that Clifford algebras (Example \ref{Clifford1}) satisfy the condition $\zG_{\cA^{\ts}}=\zG\evp$, so that their free graded modules are characterized by the rank.
For the algebras introduced in Example \ref{exaWeil}, which generalize the dual numbers, the situation is opposite: $\zG_{\cA^{\ts}}=\{0\}$ and all bases of a free graded module have same $\zG$-rank. Over such an algebra , two free graded modules are isomorphic if and only if they have the same $\zG$-rank.

\subsection{Graded matrix algebras}\label{Sec:grdmtrx}

In this paragraph, we introduce graded matrices over $(\zG,\zl)$-commutative algebras, partly following \cite{KNa84,COP}. See also \cite{NOy04,HWa12} for graded matrices over $\zG$-algebras, without commutativity assumption.
 Let $m,n\in\N$ and  $\boldsymbol{\zm}\in\zG^{m}$, $\boldsymbol{\nu}\in \zG^{n}$.
The space $\Mat(\boldsymbol{\zm}\ts\boldsymbol{\zn};\cA)$ of \emph{$\boldsymbol{\zm}\ts\boldsymbol{\zn}$ graded matrices} is the $\K$-vector space 
of $m\ts n$ matrices, over the $(\zG,\zl)$-commutative algebra $\cA$, endowed with the $\zG$-grading:
$$
\Mat(\boldsymbol{\zm}\ts \boldsymbol{\zn};\cA)=\bigoplus_{\zg\in\zG}\Mat^{\zg}(\boldsymbol{\zm}\ts \boldsymbol{\zn};\cA)\;,
$$
where, for every $\zg\in\zG$,
 $$
\Mat^{\zg}(\boldsymbol{\zm}\ts \boldsymbol{\zn};\cA):=\left\{
X=(X^{i}_{\;\,j})_{i,j} \;|\; X^{i}_{\;\,j} \in  \cA^{\zg-\zm_{i}+\zn_{j}}
\right\}\;.
 $$
The elements of $\Mat^{\zg}(\boldsymbol{\zm}\ts \boldsymbol{\zn};\cA)$ are called \emph{homogeneous matrices of degree $\zg$}, or simply \emph{$\zg$-degree matrices}.

Let $p\in\N$ and $\boldsymbol{\pi}\in\zG^{p}$. The product of $m\ts n$ matrices with $n\ts p$ matrices defines a product
$$
\Mat(\boldsymbol{\zm}\ts \boldsymbol{\zn};\cA) \ts \Mat(\boldsymbol{\zn}\ts \boldsymbol{\zp};\cA) \raa \Mat(\boldsymbol{\zm}\ts \boldsymbol{\zp};\cA)
$$
which is compatible with the $\zG$-grading, i.e., $\Mat^{\za}(\boldsymbol{\zm}\ts \boldsymbol{\zn};\cA) \cdot \Mat^{\zb}(\boldsymbol{\zn}\ts \boldsymbol{\zp};\cA) \subset \Mat^{\za+\zb}(\boldsymbol{\zm}\ts \boldsymbol{\zp};\cA)\,$.
It turns the space $\Mat(\boldsymbol{\zn};\cA):=\Mat(\boldsymbol{\zn}\ts \boldsymbol{\zn}; \cA)$ into a $\zG$-algebra.
Moreover, $\Mat(\boldsymbol{\zn};\cA)$ admits graded $\cA$-module structures. We choose the following one, defined for all $\za\in \zG$ by
\begin{eqnarray}\nonumber
\cA^{\za} \ts \Mat(\boldsymbol{\zn};\cA) & \to & \Mat(\boldsymbol{\zn};\cA)\\ \label{A-Mod Matrix}
(a,X) & \mapsto & a\cdot X= \lp \begin{array}{ccc} \zl(\za,\zn_{1})a & & \\ & \ddots & \\ & & \zl(\za,\zn_{n})a \end{array}\rp  X
\end{eqnarray}

From the point of view of category theory, we can see $\Mat(\boldsymbol{\zn};-)$ as the functor
$$\Mat(\boldsymbol{\zn};-):\, \glAlg \raa \zG\mbox{-}\mathtt{Alg} $$
which assigns to each graded algebra $\cA$ the graded algebra of $\boldsymbol{\zn}$-square graded matrices, and to each $\zG$-algebra homomorphism  $f:\cA \to \cB$ the map
\bemap \label{Matfunctor}
 \Mat(f): & \Mat(\boldsymbol{\zn};\cA) &\to& \Mat(\boldsymbol{\zn};\cB) \\
 & \lp X^i_{\;j}\rp_{i,j} &\mapsto& \lp f(X^i_{\;j})\rp_{i,j}
\eemap

The group of invertible $\boldsymbol{\zn}$-square graded matrices is denoted by $\GL(\boldsymbol{\zn};\cA)$, and
analogously, we have the functor
\be\label{GLfunctor} \GL(\boldsymbol{\zn};-):\, \glAlg \raa \mathtt{Grp} \;.\ee
The subset of invertible homogeneous matrices of degree $\zg$ is denoted by $\GL^{\zg}(\boldsymbol{\zn};\cA)$. Similarly to the ungraded case, one gets
\begin{prop}\label{prop:iso}
Let $M$, $N$ be two free graded $\cA$-modules with bases $(e_{i})$ and $(e'_{j})$ of degrees $\boldsymbol{\zm}\in\zG^{m}$ and $\boldsymbol{\zn}\in\zG^{n}$ respectively.
Graded morphisms can be represented by graded matrices via the following isomorphism of $\zG$-graded $\K$-vector spaces,
\bemap\label{iso}
&\iHom_{\cA}(M,N)& \xrightarrow{\sim} & \Mat(\boldsymbol{\zn}\ts\boldsymbol{\zm};\cA)\\
& f &\mapsto & (F^{i}_{\;\;j})_{ij}
\eemap
where $F^{i}_{\;\;j}$ are determined by $f(e_j)=\sum_i e'_i \cdot F^i_{\,\;j}\,$.
If $M=N$ and $(e_{i})=(e'_{i})$, then \eqref{iso} is an isomorphism of $\zG$-algebras and graded $\cA$-modules, namely
$ \iEnd_{\cA}(M) \simeq \Mat(\boldsymbol{\zn};\cA) \;.$
\end{prop}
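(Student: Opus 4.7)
The plan is to construct the map and its inverse explicitly, then verify that (a) it is a grading-preserving $\K$-linear bijection, (b) it intertwines composition with matrix multiplication in the case $M=N$, and (c) it is $\cA$-linear for the module structure \eqref{A-Mod Matrix}.

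First, I would define the map on a homogeneous morphism $f\in \iHom^{\gamma}_{\cA}(M,N)$ by expanding $f(e_{j})$ in the basis $(e'_{i})$ of $N$: writing $f(e_{j})=\sum_{i}e'_{i}\cdot F^{i}_{\;j}$ produces uniquely determined components $F^{i}_{\;j}\in\cA$. Since $f(e_{j})\in N^{\mu_{j}+\gamma}$ and $e'_{i}\in N^{\nu_{i}}$, the component $F^{i}_{\;j}$ sits in $\cA^{\gamma-\nu_{i}+\mu_{j}}$, so the matrix $(F^{i}_{\;j})$ lies in $\Mat^{\gamma}(\boldsymbol{\nu}\times\boldsymbol{\mu};\cA)$, as required. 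Extending by $\K$-linearity across all degrees gives a grading-preserving $\K$-linear map.

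For the inverse, given $F=(F^{i}_{\;j})\in\Mat(\boldsymbol{\nu}\times\boldsymbol{\mu};\cA)$, I would define $\phi_{F}:M\to N$ on an arbitrary element $m=\sum_{j}e_{j}\cdot m^{j}$ by $\phi_{F}(m):=\sum_{i,j}e'_{i}\cdot F^{i}_{\;j}\cdot m^{j}$, which is well defined because $(e_{j})$ is a basis of $M$; right $\cA$-linearity is immediate from the definition and associativity of the $\cA$-action, and the assignment $F\mapsto\phi_{F}$ is $\K$-linear. The two constructions are mutually inverse: evaluating $\phi_{F}$ on a basis element $e_{j}$ recovers $\sum_{i}e'_{i}\cdot F^{i}_{\;j}$, while the expansion of any $f$ in the basis $(e'_{i})$ is unique, so running the two arrows in either order gives the identity. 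Hence \eqref{iso} is an isomorphism of $\zG$-graded $\K$-vector spaces.

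When $M=N$ with $(e_{j})=(e'_{j})$, I would verify compatibility with composition by the standard direct calculation: if $f,g\in\iEnd_{\cA}(M)$ correspond to matrices $F,G$, then
\[
(f\circ g)(e_{j})=f\Big(\sum_{k}e_{k}\cdot G^{k}_{\;j}\Big)=\sum_{k}f(e_{k})\cdot G^{k}_{\;j}=\sum_{i,k}e_{i}\cdot F^{i}_{\;k}G^{k}_{\;j},
\]
using right $\cA$-linearity of $f$; thus $f\circ g$ is sent to the matrix product $FG$, and the identity morphism clearly corresponds to the identity matrix, proving that the bijection is an isomorphism of $\zG$-algebras. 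Finally, for $\cA$-linearity (with respect to the left $\cA$-module structure \eqref{A-Mod Matrix}), I would take homogeneous $a\in\cA$ and $f\in\iEnd_{\cA}(M)$, and compute $(a\cdot f)(e_{j})=a\cdot f(e_{j})=\sum_{i}a\cdot e_{i}\cdot F^{i}_{\;j}=\sum_{i}\lambda(\tilde{a},\nu_{i})\,e_{i}\cdot a\cdot F^{i}_{\;j}$, where the last equality uses \eqref{rlmod}. The coefficient of $e_{i}$ is $\lambda(\tilde{a},\nu_{i})\,a\,F^{i}_{\;j}$, which is exactly the $(i,j)$-entry of $a\cdot F$ in \eqref{A-Mod Matrix}, as desired.

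The verifications themselves are routine; the only point demanding care is the bookkeeping between left and right $\cA$-actions via \eqref{rlmod} when matching the left module structure on matrices with the natural left action on morphisms, and the matching of the degree shift $\gamma-\nu_{i}+\mu_{j}$ in the two conventions (rows indexed by $\boldsymbol{\nu}$, columns by $\boldsymbol{\mu}$). No new ingredient beyond the uniqueness of expansion in a basis and the commutation factor rules is required.
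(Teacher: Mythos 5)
Your proposal is correct and follows essentially the same approach as the paper's proof: compute the degree of $F^{i}_{\;j}$ to confirm grading compatibility, verify the algebra morphism property via right $\cA$-linearity of $f$ on the basis, and check $\cA$-linearity using \eqref{rlmod} to move the scalar past $e_i$ so as to match the module structure \eqref{A-Mod Matrix}. You spell out the inverse map more explicitly than the paper does, but the underlying argument is identical.
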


It is necessary to write matrix coefficients on the right of the basis vectors to get a morphism of algebra.
This leads to an atypical matrix representation for diagonal endomorphisms, but which fits with the $\cA$-module structure
defined in \eqref{A-Mod Matrix}. Namely, the endomorphism $f$, specified by $f(e_i)=~a_ie_i$ with $a_i\in\cA^{\degr{a}_i}$, reads in matrix form as
$$
F=\lp
\begin{array}{ccc}
\zl(\degr{a}_1,\degr{e}_1)\,a_1 &&\\
&\ddots &\\
&&\zl(\degr{a}_n,\degr{e}_n)\,a_n\\
\end{array}
\rp\;.
$$


\subsection{Change of basis}
Let $M$ be a free graded $\cA$-module of rank $n$, with bases $(e_i)$ to $(e'_i)$ of degree $\boldsymbol{\zm}\in \zG^{n}$ and $\boldsymbol{\zn}\in\zG^{n}$ respectively.
A  basis transformation matrix from $(e_i)$ to $(e'_i)$ is a graded matrix $P\in \Mat^0(\boldsymbol{\zn}\ts\boldsymbol{\zm};\cA)$ with inverse $P^{-1}\in \Mat^0(\boldsymbol{\zm}\ts\boldsymbol{\zn};\cA)\,$. The graded matrices $F\in \Mat(\boldsymbol{\zm};\cA)$ and $F'\in\Mat(\boldsymbol{\zn};\cA)$ representing the same graded endomorphism $f\in \iEnd_{\cA}(M)$ are, as usual, linked through the equality
$$ F= P^{-1}F'P\;. $$
This provides the isomorphism of $\zG$-algebra, $\Mat(\boldsymbol{\zm};\cA) \simeq \Mat(\boldsymbol{\zn};\cA)\,$.

We use now particular change of basis to get graded matrices of specific forms.

\subsubsection{Permutation of basis and block graded matrices}\label{section:blockmatrix}

Assume $\zG$ is of finite order $N$. We choose an ordering on $\zG$, i.e.,  a bijective map
\beas
\{1, \ldots , N\}& \raa & \zG \\
u & \mapsto & \zg_{u}
\eeas
By permuting the elements of a basis $(e_i)$, of degree $\boldsymbol{\zn}\in \zG^n$ and $\zG$-rank $\vect{r}=(r_{\zg_u})_{u=1,\ldots,n}\,$, we obtain a basis $(e'_i)$ with ordered degree $\boldsymbol{\zm}\in\zG^n$. This means $\mu_i\leq \mu_{i+1}$ if $i=1,\ldots,n-1$.
In this new basis, a homogeneous graded matrix $X\in \Mat^x(\boldsymbol{\zm};\cA)$ is a block matrix
\be\label{Matrixblock}
X=\left(
\begin{array}{c|c|c}
\cX_{11} \Top &\; \ldots \; &\cX_{1N}\\[6pt]
\hline
\ldots \Top &\ldots&\ldots\\[3pt]
\hline
&&\\[-6pt]
\cX_{N1} &\ldots&\cX_{NN} \\[3pt]
\end{array}
\right)\,,
\ee
where each block is a matrix of the form $\cX_{uv}\in \Mat(r_{\zg_u}\ts r_{\zg_v};\cA )\,$, with entries in $\cA^{x-\zg_u+\zg_v}$.
This representation of the algebra of graded matrices is the one used in \cite{COP,Cov}, where it is referred to as $\Mat(\vect{r};\cA)\,$.

\subsubsection{Rescaling of basis and classical matrices}

Assume that the graded algebra $\cA$ contains an invertible element $\f{t}_{\za}\in \cA^{\za}$ of each even degree $\za$. Then, one can change the degree of a matrix through rescaling matrices
$$
P= \lp
\begin{array}{ccc}
\f{t}_{\za_1} & &\\
& \ddots & \\
&& \f{t}_{\za_n}
\end{array}
\rp
$$
with $\za_1,\ldots , \za_n \in \zG\evp$. This leads to the following result.
\begin{prop}\label{cor: specialcase}
Assume $\zG_{\cA^{\ts}}=\zG\evp$ and $(r\evp,r\odp)\in \N^{2}$.
For all $\boldsymbol{\zn}$ and $\boldsymbol{\zm}$ in $\zG\evp^{r\evp}\ts \zG\odp^{r\odp}$, we have an isomorphism of $\zG$-algebras
$$\Mat(\boldsymbol{\zm};\cA) \simeq \Mat(\boldsymbol{\zn};\cA)\;.$$
If more specifically $\boldsymbol{\zm} \in\zG\evp^{r\evp}$, the 0-degree matrices over $\cA$ identify to the usual matrices over the commutative algebra $\cA^0$, namely, we have an isomorphism of $\zG$-algebras
\be\label{Mat0}\Mat^{0}(\boldsymbol{\zm};\cA) \simeq \Mat(r\evp\,;\cA^{0}) \;.\ee
\end{prop}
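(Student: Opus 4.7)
The plan is to identify both matrix algebras $\Mat(\boldsymbol{\zm};\cA)$ and $\Mat(\boldsymbol{\zn};\cA)$ with the endomorphism ring $\iEnd_\cA(M)$ of a single free graded $\cA$-module $M$ that admits bases of both prescribed degrees. Existence of such a module and such bases follows from Proposition \ref{prop:rank}(i) once one checks that $\boldsymbol{\zn}-\boldsymbol{\zm}\in(\zG_{\!\cA^{\ts}})^n$; the two identifications then come from Proposition \ref{prop:iso}.

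For part (i), the components $\zm_i$ and $\zn_i$ share parity at every index by hypothesis. Since $\zG\evp$ is the kernel of the group morphism $\zvf_\zl:\zG\to\Z_2$, differences of same-parity elements lie in $\zG\evp$, so each $\zn_i-\zm_i\in\zG\evp=\zG_{\!\cA^{\ts}}$. Starting from a basis $(e_i)$ of $M$ of degree $\boldsymbol{\zm}$, choose invertible $a_i\in\cA^{\zn_i-\zm_i}$ and set $f_i:=e_i a_i$; this produces a basis $(f_i)$ of $M$ of degree $\boldsymbol{\zn}$, exactly as in the construction behind Proposition \ref{prop:rank}(i). Applying Proposition \ref{prop:iso} to each basis yields the desired isomorphism of $\zG$-algebras, realized explicitly by the similarity $X\mapsto P^{-1}XP$, where $P$ is the diagonal transition matrix with entries $a_i$ on the diagonal.

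For part (ii), the hypothesis $\boldsymbol{\zm}\in\zG\evp^{r\evp}$ forces $r\odp=0$, so part (i) applies with $\boldsymbol{\zn}=\boldsymbol{0}$ and yields $\Mat^0(\boldsymbol{\zm};\cA)\simeq\Mat^0(\boldsymbol{0};\cA)$ as $\zG$-algebras. By the very definition of the grading on graded matrices, $\Mat^0(\boldsymbol{0};\cA)$ consists of $r\evp\ts r\evp$ arrays whose entries all lie in $\cA^{0-0+0}=\cA^0$, and this algebra is commutative since $\cA^0\subset Z(\cA)$ (cf. Section \ref{BasicProp}); hence it coincides with the ungraded matrix algebra $\Mat(r\evp;\cA^0)$. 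The only piece of real bookkeeping is to verify that the transition matrix sits in $\Mat^0(\boldsymbol{\zm}\ts\boldsymbol{\zn};\cA)$ and its inverse in $\Mat^0(\boldsymbol{\zn}\ts\boldsymbol{\zm};\cA)$, so that conjugation preserves the $\zG$-degree. This follows immediately from $P^j_{\;j}=a_j\in\cA^{\zn_j-\zm_j}=\cA^{0-\zm_j+\zn_j}$, so no substantive obstacle arises; the proposition is a direct corollary of Proposition \ref{prop:rank}(i) and Proposition \ref{prop:iso}.
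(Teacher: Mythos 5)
Your proof is correct and follows essentially the same route as the paper's: identify both matrix algebras with $\iEnd_{\cA}(M)$ of a single free module via Proposition \ref{prop:iso}, and use Proposition \ref{prop:rank}(i) (which the paper invokes indirectly through Corollary \ref{cor:GAstar}) to obtain a basis of the second prescribed degree. Your extra bookkeeping on the diagonal transition matrix is a welcome, though not strictly necessary, supplement to the paper's more terse argument.
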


\begin{proof}
Let $M$ a free graded $\cA$-module with a basis of degree $\boldsymbol{\zm}\in \zG\evp^{r\evp}\ts \zG\odp^{r\odp}$.
By Proposition~\ref{prop:iso}, we have $\Mat(\boldsymbol{\zm};\cA) \simeq \iEnd_{\cA}(M)\,$.
In view of Corollary \ref{cor:GAstar}, for all $\boldsymbol{\zn}\in \zG\evp^{r\evp}\ts \zG\odp^{r\odp}$, there exists a basis of $M$ of degree $\boldsymbol{\zn}$. Applying again Proposition \ref{prop:iso}, we deduce
$$\Mat(\boldsymbol{\zn};\cA) \simeq \iEnd_{\cA}(M) \simeq \Mat(\boldsymbol{\zm};\cA)\;.$$

As a direct consequence, if $\boldsymbol{\zm} \in\zG\evp^{r\evp}$ we have $\Mat(\boldsymbol{\zm};\cA) \simeq \Mat(\boldsymbol{0};\cA)\,$, with $\boldsymbol{0}=(0,\ldots,0)\in\zG^{r\evp}$.  By definition of the degree of a graded matrix, this leads to \eqref{Mat0}.
\end{proof}

The second part of the Proposition \ref{cor: specialcase} is well-known. This is a consequence of a Theorem of Dade \cite{Dad80},
which holds for matrix algebras over any strongly graded $\zG$-algebra.

\subsection{Natural isomorphisms on graded matrices}\label{secJs}

Let $\zvs\in \f{S}(\zl)$ and $M$ be a free graded $\cA$-module of rank $n$, with basis $(e_{i})$ of degree $\boldsymbol{\zn}\in\zG^{n}$.
We denote by $\cuA=\Ivs(\cA)$ the $\zG$-graded supercommutative algebra given by the NS-functor $\Ivs\,$.
Theorem \ref{Thm:A}, together with the isomorphism \eqref{iso}, yields an isomorphism of $\zG$-graded $\cA^{0}$-modules
\bemap \label{Js}
J_{\zvs,\cA}:& \Mat(\boldsymbol{\zn};\cA)& \raa & \Mat(\boldsymbol{\zn};\cuA)\\
    & X & \mapsto & J_{\zvs,\cA}(X)
\eemap
which explicitly reads,
on a homogeneous matrix $X$ of degree $x$, as
\be  \tag{\theequation a} \label{Jsentry}
\lp J_{\zvs,\cA}(X)\rp_{i,j} := \zvs(x, \zn_{j}) \zvs(\zn_{i}, x-\zn_{i}+ \zn_{j})^{-1} X^i_j \;,
\ee
for all  $i,j\in\{1,2,\ldots , n\}\,$.
The inverse of $J_{\zvs,\cA}$ is of the same form, namely $\lp J_{\zvs,\cA}\rp^{-1}=J_{\zd,\cA}\,$, with $\zd(x,y):=\zvs(x,y)^{-1}$ for all $x,y\in\zG$. From Theorem \ref{Thm:A}, we deduce the following
\begin{prop}\label{Cor:B}
The map $J_{\zvs,\cA}$ defined in (\ref{Js}) has the following properties:
\begin{enumerate}
\item for any homogeneous matrices $X,Y\in\Mat(\boldsymbol{\zn};\cA)$, of degree respectively $x$ and $y$,
    $$ J_{\zvs}(X Y) =\zvs(x,y)^{-1}\, J_{\zvs}(X)  J_{\zvs}(Y) \;;$$
\item for any graded matrices $X,Y\in\Mat(\boldsymbol{\zn};\cA)$, with either $X$ or $Y$ homogeneous of degree $0$,
    $$ J_{\zvs}(X Y) =\, J_{\zvs}(X) J_{\zvs}(Y) \;;$$
\item for any homogeneous invertible matrix $X\in \GL^{x}(\boldsymbol{\zn};\cA)$, $J_{\zvs}(X)$ is invertible and
    $$ J_{\zvs}(X^{-1})= \zvs(x,-x)\, \lp J_{\zvs}(X)\rp^{-1} \;.$$
\end{enumerate}
\end{prop}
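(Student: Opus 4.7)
The plan is to deduce all three properties directly from Theorem \ref{Thm:A} by transporting equation \eqref{etacomp} through the algebra isomorphism $\iEnd_{\cA}(M) \simeq \Mat(\boldsymbol{\zn};\cA)$ of Proposition \ref{prop:iso}. The crucial observation is that $J_{\zvs,\cA}$ is nothing but the matrix transcription of $\zh_\zvs$: indeed, by Lemma \ref{lem:Ivsbasis}, the basis $(e_i)$ of degree $\boldsymbol{\zn}$ serves simultaneously as a basis of $M$ over $\cA$ and of $\widehat{I}_{\zvs}(M)$ over $\cuA$, so Proposition \ref{prop:iso} applied on both sides identifies $\zh_{\zvs}$ with $J_{\zvs,\cA}$ (composition of endomorphisms becoming matrix multiplication in the same order). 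In particular, property (1) is then just a direct rewriting of \eqref{etacomp}: if $f,g \in \iEnd_{\cA}(M)$ are the homogeneous endomorphisms corresponding to $X,Y$ of degrees $x,y$, then $\zh_{\zvs}(f\circ g) = \zvs(x,y)^{-1}\,\zh_{\zvs}(f)\circ \zh_{\zvs}(g)$ becomes $J_\zvs(XY) = \zvs(x,y)^{-1} J_\zvs(X) J_\zvs(Y)$.

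For property (2), I would reduce to (1) by $\K$-linearity. Since $J_{\zvs,\cA}$ is an $\cA^0$-module isomorphism (hence additive), I decompose $Y = \sum_{y\in\zG} Y_y$ into its homogeneous components $Y_y \in \Mat^y(\boldsymbol{\zn};\cA)$ (the case where $X$ is of degree $0$; the other case is symmetric) and obtain
\[
J_\zvs(XY) \;=\; \sum_{y\in\zG} J_\zvs(X\,Y_y) \;=\; \sum_{y\in\zG} \zvs(0,y)^{-1}\, J_\zvs(X)\, J_\zvs(Y_y).
\]
Biadditivity of $\zvs$ together with $\zvs(0,0)=1$ forces $\zvs(0,y)=1$ for all $y\in\zG$, and distributivity of matrix multiplication then yields $J_\zvs(XY) = J_\zvs(X) J_\zvs(Y)$.

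For property (3), I first check that $J_\zvs(\mathrm{Id}) = \mathrm{Id}$: the identity matrix is of degree $0$, and the diagonal entries of $J_\zvs(\mathrm{Id})$ given by \eqref{Jsentry} reduce to $\zvs(0,\zn_i)\,\zvs(\zn_i,0)^{-1} = 1$. For $X \in \GL^x(\boldsymbol{\zn};\cA)$, uniqueness of the inverse forces $X^{-1}$ to be homogeneous of degree $-x$. Applying (1) to the identity $X\cdot X^{-1} = \mathrm{Id}$ gives
\[
\mathrm{Id} \;=\; J_\zvs(\mathrm{Id}) \;=\; \zvs(x,-x)^{-1}\, J_\zvs(X)\, J_\zvs(X^{-1}),
\]
so $J_\zvs(X)$ is invertible in $\Mat(\boldsymbol{\zn};\cuA)$ with $J_\zvs(X^{-1}) = \zvs(x,-x)\,\bigl(J_\zvs(X)\bigr)^{-1}$.

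The computations are short and there is no serious obstacle; the only point requiring care is conceptual rather than technical, namely keeping track of the fact that $M$ and $\widehat{I}_\zvs(M)$ carry different module structures (over $\cA$ and $\cuA$ respectively) while sharing the same underlying basis, so that $J_{\zvs,\cA}$ really is the matrix avatar of $\zh_\zvs$ and \eqref{etacomp} translates without decoration into the multiplicative distortion stated in (1).
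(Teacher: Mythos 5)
Your proof is correct and follows exactly the route the paper intends, which introduces Proposition \ref{Cor:B} with the words ``From Theorem \ref{Thm:A}, we deduce the following'' and leaves the details implicit: you transport \eqref{etacomp} through the matrix isomorphism of Proposition \ref{prop:iso} (using Lemma \ref{lem:Ivsbasis} to keep the same basis on both sides), then get (2) by additivity and $\zvs(0,\cdot)=1$, and (3) from $J_\zvs(\mathrm{Id})=\mathrm{Id}$ and homogeneity of $X^{-1}$. The only micro-elision in (3) is that you verify only the right-inverse relation from $XX^{-1}=\mathrm{Id}$; the left side produces the scalar $\zvs(-x,x)$, which equals $\zvs(x,-x)$ by biadditivity, so $J_\zvs(X^{-1})$ is indeed a two-sided inverse up to the stated scalar.
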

The family of maps $J_{\zvs,\cA}$, parameterized by $\cA\in \Obj(\glAlg)$,
is a natural isomorphism
\[
\begin{tikzpicture}
\matrix(m)[matrix of math nodes, column sep=6em]
{
\glAlg & \zG\mbox{-}\cat{Mod}_{\K}\\
};
\draw[->](m-1-1) to[bend left=40] node[label=above:$\scriptstyle\Mat\big(\boldsymbol{\zn};   -  \big) $] (U) {} (m-1-2);
\draw[->] (m-1-1) to[bend right=40,name=D] node[label=below:$\scriptstyle \Mat\big(\boldsymbol{\zn}; \Ivs(  -  ) \big)$] (V) {} (m-1-2);
\draw[double,double equal sign distance,-implies,shorten >=5pt,shorten <=5pt]
  (U) -- node[label=right:$J_{\zvs}$] {} (V);
\end{tikzpicture}
\]
where the functor $\Mat\big(\boldsymbol{\zn}; \Ivs(  -  ) \big)$ is the composition of the functor $\Mat\big( \boldsymbol{\zn}; - \big)$ (recall (\ref{Matfunctor})) with the NS-functor $\Ivs\,$.
The restriction  of $J_{\zvs,\cA}$ to $0$-degree invertible matrices has further properties.
\begin{prop}\label{Jvsfun}
Let $\zvs\in \f{S}(\zl)$ and $\boldsymbol{\zn}\in \zG^{n}$. The family of maps
\be\label{JzvsGL0}  J_{\zvs, \cA}: \, \GL^0(\boldsymbol{\zn};\cA) \xrightarrow{\sim} \GL^0(\boldsymbol{\zn};\cuA)\;, \ee
 parameterized by $\cA\in \Obj(\glAlg)$, defines a natural isomorphism
\[
\begin{tikzpicture}
\matrix(m)[matrix of math nodes, column sep=6em]
{
\glAlg & \mathtt{Grp}\\
};
\draw[->](m-1-1) to[bend left=40] node[label=above:$\GL^0\big(\boldsymbol{\zn};   -  \big) $] (U) {} (m-1-2);
\draw[->] (m-1-1) to[bend right=40,name=D] node[label=below:$ \GL^0\big(\boldsymbol{\zn}; \Ivs( -  ) \big)$] (V) {} (m-1-2);
\draw[double,double equal sign distance,-implies,shorten >=5pt,shorten <=5pt]
  (U) -- node[label=right:$J_{\zvs }$] {} (V);
\end{tikzpicture}
\]
\end{prop}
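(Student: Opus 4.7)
The proof breaks into two parts: first showing that, for each $\cA\in\Obj(\glAlg)$, the restricted map \eqref{JzvsGL0} is a well-defined group isomorphism; and second verifying naturality in $\cA$.

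For the first point, Proposition \ref{Cor:B} does the bulk of the work. Item (2) applied with $X$ and $Y$ both of degree $0$ gives $J_{\zvs,\cA}(XY)=J_{\zvs,\cA}(X)\,J_{\zvs,\cA}(Y)$ for all $X,Y\in\Mat^0(\boldsymbol{\zn};\cA)$, so the restriction is multiplicative. Item (3) specialized at $x=0$ (recalling that $\zvs(0,0)=1$ since $\zvs$ is biadditive) yields $J_{\zvs,\cA}(X^{-1})=J_{\zvs,\cA}(X)^{-1}$ for $X\in\GL^0(\boldsymbol{\zn};\cA)$, and in particular $J_{\zvs,\cA}$ sends invertibles to invertibles. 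Bijectivity follows because $J_{\zvs,\cA}$ is already a grading-preserving $\K$-linear isomorphism on all of $\Mat(\boldsymbol{\zn};\cA)$ (from Theorem \ref{Thm:A} in matrix form), hence restricts to a bijection on degree-$0$ components, and the computation of the inverse $J_{\zvs,\cA}^{-1}=J_{\zd,\cA}$ (with $\zd=\zvs^{-1}$) shows symmetrically that elements of $\GL^0(\boldsymbol{\zn};\cuA)$ are hit by elements of $\GL^0(\boldsymbol{\zn};\cA)$.

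For naturality, let $f:\cA\to\cB$ be a morphism in $\glAlg$. The underlying map of graded $\K$-vector spaces of $\widehat{I}_{\zvs}(f):\cuA\to\cuB$ coincides with $f$ itself, the NS-functor only altering the algebra structures. Hence $\GL^0(\boldsymbol{\zn};\widehat{I}_{\zvs}(f))$ and $\GL^0(\boldsymbol{\zn};f)$ both act entrywise by $f$. Inspecting the explicit formula \eqref{Jsentry} restricted to a degree-$0$ matrix $X$,
\[
\bigl(J_{\zvs,\cA}(X)\bigr)^i_{\ j}=\zvs(0,\zn_j)\,\zvs(\zn_i,\zn_j-\zn_i)^{-1}\,X^i_{\ j},
\]
one sees that the scalar prefactor $\zvs(0,\zn_j)\,\zvs(\zn_i,\zn_j-\zn_i)^{-1}\in\K^{\ts}$ depends only on $\zvs$ and on the fixed tuple $\boldsymbol{\zn}$, and not on the algebra. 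Since any algebra morphism $f$ is $\K$-linear, it commutes with these scalars, so
\[
f\!\bigl((J_{\zvs,\cA}(X))^i_{\ j}\bigr)=\zvs(0,\zn_j)\,\zvs(\zn_i,\zn_j-\zn_i)^{-1}\,f(X^i_{\ j})=\bigl(J_{\zvs,\cB}(\Mat(f)(X))\bigr)^i_{\ j},
\]
which is exactly the commutativity of the naturality square.

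There is no real obstacle here beyond bookkeeping: the core algebraic content (morphism and bijection properties of $J_{\zvs,\cA}$, invariance under change of algebra structure) has already been established in Theorem \ref{Thm:A}, Proposition \ref{Cor:B}, and the explicit formula \eqref{Jsentry}. The present statement simply packages those results as a natural isomorphism of group-valued functors on $\glAlg$.
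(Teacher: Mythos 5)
Your proof follows the same structure as the paper's: invoke Proposition \ref{Cor:B} (items (2) and (3)) to get that the restriction to degree-$0$ invertible matrices is a group isomorphism, and then check the naturality square. You merely spell out the naturality computation via the explicit formula \eqref{Jsentry}, which the paper leaves implicit; this is a faithful unpacking, not a different route.
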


\begin{proof}
By Proposition \ref{Cor:B}, the map (\ref{JzvsGL0}) is a group isomorphism.
The naturality of $J_{\zvs,\cA}$ follows from the diagram of groups
\[
\begin{tikzpicture}
\matrix(m)[matrix of math nodes, row sep=3em, column sep =6em]
{
\GL^0(\boldsymbol{\zn}; \cA) & \GL^0(\boldsymbol{\zn}; \cB)\\
\GL^0(\boldsymbol{\zn}; \cuA) & \GL^0(\boldsymbol{\zn}; \cuB) \\
};
\path[->]
(m-1-1) edge node[above]{$ \GL(f) $} (m-1-2)
            edge node[left]{$ J_{\zvs,\cA} $} (m-2-1)
(m-2-1) edge node[below]{$ \GL\big(\Ivs(f)\big) $} (m-2-2)
(m-1-2) edge node[right]{$ J_{\zvs,\cB} $} (m-2-2)
;
\end{tikzpicture}
\]
which is commutative for all $(\zG,\zl)$-commutative algebras $\cA, \cB$ and any $\zG$-algebra morphism $f:\cA\to \cB\,$.
\end{proof}

\section{Graded Trace} \label{Sec3}

Throughout this section, $\cA$ is a $(\zG, \zl)$-commutative algebra over $\K$ and $M$ is a free graded $\cA$-module of finite rank. 
We use the closed monoidal NS-functors to pull-back the supertrace to graded endomorphisms of $M$.
 This yields the \emph{graded trace} introduced in \cite{Sch83} and extends the one introduced in \cite{COP}. We prove its characterization (Theorem \ref{Thm:C}) and provide a matrix formula for it.

\subsection{Definitions}\label{ColorLie}
A \emph{Lie color algebra}, or more precisely of \emph{$(\zG,\zl)$-Lie algebra} (see e.g. \cite{Sch}),
 is a $\zG$-graded $\K$-vector space, $\cL=\bigoplus_{\zg\in \zG}\cL^{\zg}$, endowed with a bracket $[-\, ,-]\,$, which satisfies the following properties, for all homogeneous $a,b,c\in\cL\,$,
\begin{itemize}
\item \emph{$\zl$-skew symmetry :} $\qquad[a,b]= -\zld{a}{b}\, [b,a] $ ;
\item \emph{Graded Jacobi identity : } $\qquad[a,[b,c]]= [[a,b],c] + \zld{a}{b}\, [b,[a,c]] $ .
\end{itemize}
A \emph{morphism of $(\zG,\zl)$-Lie algebras} is a degree-preserving $\K$-linear map $f: \cL \to \cL'$ which satisfies
$ f([a,b])=[f(a),f(b)]\,$, for every $a,b\in\cL\,$.

Examples of $(\zG,\zl)$-Lie algebras are $\zG$-algebras, e.g.\ the algebras $\cA$ or $\iEnd_\cA(M)$, endowed with the $\zG$-graded commutator
$$ [a,b]_{\zl}:= ab - \zld{a}{b}\, ba \;.$$
Note that Lie superalgebras are $(\Z_2, \zlsup)$-Lie algebras. By definition, a {\em graded trace} is a degree-preserving $\cA$-linear map  $\Tr: \iEnd_{\cA}(M) \to \cA$, which is also a $(\zG,\zl)$-Lie algebra morphism.

\subsection{Proof of {\bf Theorem \ref{Thm:C}}}
Let $\zvs\in\mathfrak{S}(\zl)$ and $\Ivs$ be the corresponding NS-functor. We use the notation $\cuA=\Ivs(\cA)$, $\zl^\zvs=\zlsup$, and $\und{M}=\widehat{\Ivs}(M)\,$. In view of Theorem~\ref{Thm:A}, the linear isomorphism $\zh_\zvs:\iEnd_{\cA}(M) \to \iEnd_{\cuA}(\und{M})$ satisfies
$$\zh_\zvs([f,g]_{\zl})=\zvs\lp\degr{f},\degr{g}\rp^{-1}\,[\zh_\zvs(f),\zh_\zvs(g)]_{\zlsup}\;,$$
for any pair $f,g$ of homogeneous endomorphisms.
Hence, a graded trace $t:~\iEnd_{\cA}(M) \to \cA$ induces a  Lie superalgebra morphism, $t\circ (\zh_\zvs)^{-1} :\iEnd_{\cuA}(\und{M}) \to \cuA\,$. The latter is  $\cuA$-linear, since $t$ is $\cA$-linear.
As a consequence, the map $t\circ (\zh_\zvs)^{-1}$ is a multiple of the supertrace $\str$ by a non-zero element of $\cuA\evp\,$.
Since $t$ respects the $\zG$-degree, it is then equal to $\Tr=\str\circ \eta_\zvs$ up to multiplication by an element of $\cA^0\,$.

\subsection{Graded trace on matrices}
Let $n\in \N$ and $M$ be a free graded $\cA$-module of rank $n$ with a basis $(e_{i})$ of degree $\boldsymbol{\zn}\in\zG^{n}$.
Through the isomorphism \eqref{iso}, the graded trace $\Tr$ automatically defines an $\cA$-linear  $(\zG,\zl)$-Lie algebra morphism
$$\Tr:\;\Mat(\boldsymbol{\zn};\cA)\to \cA\;.$$
With the notation of Section \ref{secJs}, we have
$ \Tr=\str \circ J_{\zvs}\,$.
Hence, for a homogeneous matrix $X=(X^{i}_{\,\;j})_{i,j}\in \Mat^x(\boldsymbol{\zn};\cA)$, we find that
$$  \Tr(X) = \sum_i  \zl(\zn_i, x+\zn_i) X^i_{\;\,i} \;. $$
By linearity, the graded trace of an inhomogeneous matrix is the sum of the graded traces of its homogeneous components.
The above formula shows that $\Tr$ does not depend on the NS-multiplier~$\zvs$ chosen for its construction.

In the categorical language, the graded trace $\Tr$ defines a natural transformation
\[
\begin{tikzpicture}
\matrix(m)[matrix of math nodes, column sep=8em, row sep=4em]
{
\glAlg & \glLie \\
};
\draw[->, shorten >=5pt,shorten <=5pt](m-1-1) to[bend left=40] node[label=above:$\Mat(\boldsymbol{\zn};   -  ) $] (U) {} (m-1-2);
\draw[->,shorten >=5pt,shorten <=5pt] (m-1-1) to[bend right=40,name=D] node[label=below:$\op{Lie}( - )$] (V) {} (m-1-2);
\draw[double,double equal sign distance,-implies,shorten >=5pt,shorten <=5pt]
 (U) -- node[label=right:$\Tr$] {} (V);
\end{tikzpicture}
\]
where $\glLie$ is the category of $(\zG,\zl)$-Lie algebras and
the functor $\op{Lie}$ associates to each $(\zG,\zl)$-commutative algebra $\cA=(A,\cdot)$ the corresponding abelian $(\zG,\zl)$-Lie algebra $(A,[\cdot,\cdot]_\zl)$.


\section{Graded Determinant of $0$-degree matrices} \label{Sec4new1}

Throughout this section, $\cA$ is a $(\zG, \zl)$-commutative algebra over $\K$. 
Moreover, we restrict ourselves to the \emph{purely even} case, i.e., $\zG=\zG\evp\;$.
The NS-functors allow us to pull-back the determinant to $0$-degree invertible matrices over $\cA\,$. This yields the \emph{graded determinant} introduced in \cite{KNa84} and extends the one introduced in \cite{COP}. We prove its characterization (Theorem \ref{thm:Gdet0}), provide a new formula for it (Theorem \ref{FormulaGdeto}) and establish some of its properties.

\subsection{Proof of Theorem \ref{thm:Gdet0}}
We first prove existence of the graded determinant $\Gdet^0$ and then its uniqueness.
We end up with a remark.

\subsubsection{Existence}

We define the natural transformation $\Gdet^0$ as the vertical composition (see Appendix \ref{vertical})
\[
\begin{tikzpicture}
\matrix(m)[matrix of nodes, column sep=25em]
{
$(\zG\evp , \zl)$-\texttt{Alg}  &\texttt{Grp}\\
};
\draw[->,shorten >=10pt,shorten <=10pt](m-1-1) to[bend left=60] node[pos=0.51, above] (U') {$\scriptstyle\GL^0\big(\boldsymbol{\zn};   -  \big) $} (m-1-2.north);
\draw[->,shorten >=5pt,shorten <=5pt](m-1-1.north east) to[bend left=15] node[label=above:$\scriptstyle\GL^0\big(\boldsymbol{\zn}; \Ivs(  -  ) \big) $] (U) {} (m-1-2.north west);
\draw[->,shorten >=5pt,shorten <=5pt] (m-1-1.south east) to[bend right=15,name=D] node[label=below:$\scriptstyle\big((\Ivs(  -  ))^0 \big)^{\ts}$] (V) {} (m-1-2.south west);
\draw[->,shorten >=10pt,shorten <=10pt] (m-1-1) to[bend right=50,name=D'] node[pos=0.51,below] (V') {$\scriptstyle\big((  -  )^0 \big)^{\ts}$} (m-1-2.south);
\draw[double,double equal sign distance,-implies, shorten >=18pt,shorten <=5pt]
  (U') -- node[label=above right:$J_{\zvs}$] {} (U);
\draw[double,double equal sign distance,-implies,shorten >=5pt,shorten <=5pt]
  (U) -- node[label=right:$\det_{\Ivs(  -  )}$] {} (V);
  \draw[double,double equal sign distance, shorten >=5pt,shorten <=18pt]
  (V) -- node[] {} (V');
\draw[double,double equal sign distance,-implies, shorten >=10pt,shorten <=10pt]
  (U'.south west) to[bend right=30] node[label=left:$\Gdet^0$] {} (V'.north west);
\end{tikzpicture}
\]
where $ J_{\zvs}$ is the natural isomorphism introduced in Proposition \ref{Jvsfun} and $\det_{\Ivs( - )}$ denotes the natural transformation obtained by whiskering (see Appendix \ref{whisker}) as follows
\[
\begin{tikzpicture}
\matrix[matrix of nodes,column sep=2cm] (cd)
{
 $(\zG\evp\,, \zl)$-\texttt{Alg} & $(\zG\evp,\mathbf{1})\,$-\texttt{Alg} & \texttt{Grp} \\
};
\draw[->]
(cd-1-1) edge node[pos=0.3,below] {$\scriptstyle\Ivs$} (cd-1-2);
\draw[->,shorten >=5pt,shorten <=5pt] (cd-1-2) to[bend left=30] node[label=above:$\scriptstyle\GL^0(\boldsymbol{\zn}; - )$] (U) {} (cd-1-3);
\draw[->] (cd-1-2) to[bend right=30,name=D] node[label=below:$\scriptstyle \lp( - )^{0}\rp^{\ts}$] (V) {} (cd-1-3);
\draw[double,double equal sign distance,-implies,shorten >=2pt,shorten <=2pt]
  (U) -- node[label=right:$\det$] {} (V);
\draw[->,shorten >=5pt,shorten <=5pt] (cd-1-1.north) to[bend left=50] node[label=above:$\scriptstyle\GL^0(\boldsymbol{\zn};\Ivs( - ))$] (U') {} (cd-1-3.north);
\draw[->,shorten >=5pt,shorten <=5pt] (cd-1-1.south) to[bend right=50,name=E] node[label=below:$\scriptstyle \lp(\Ivs( - ))^0\rp^{\ts}$] (V') {} (cd-1-3.south);
\draw[double,double equal sign distance,-implies,shorten >=10pt,shorten <=10pt]
  (U') to[bend right=55] node[pos=0.3, left] {$\det_{\Ivs( - )}$} (V');
\end{tikzpicture}
\]
Here, $(\zG\evp\,,1)\,$-\texttt{Alg} denotes the category of commutative algebras which are $\zG\evp\,$-graded.

By construction, $\Gdet^0$ satisfies the first axiom of Theorem \ref{thm:Gdet0}
and reads as \eqref{def:gdet0}.
The second axiom of Theorem \ref{thm:Gdet0} follows then from
 the explicit expression of $J_{\zvs}$ given in \eqref{Jsentry}.

\subsubsection{Uniqueness}

We need a preliminary result, which generalizes a Theorem of McDonald \cite{McD}.
\begin{lem}\label{generalMcD}
If $(\zD_{\cA})_{\cA \in \glAlg}$ is a family of maps
$$\zD_{\cA}:\, \GL^0(\boldsymbol{\zn};\cA) \to (\cA^0)^{\ts}$$
satisfying axiom \textsc{ai} of Theorem \ref{thm:Gdet0},
then there exists $t\in \Z$ and $\ze\in \{\pm1\}$ such that
\be\label{powergdet}
\zD_{\cA}(X)=\ze \cdot \lp \Gdet^0_{\cA}(X)\rp^{t}\;,
\ee
for all $X\in \GL^0(\boldsymbol{\zn};\cA)$, with $\Gdet^0_{\cA}$ as in \eqref{def:gdet0}.
\end{lem}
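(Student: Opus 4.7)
The plan is to transport the statement to the classical commutative setting via the Nekludova--Scheunert functor and then invoke McDonald's theorem \cite{McD} on ordinary commutative rings.

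First, fix $\zvs\in\fS(\zl)$. Since $\zG=\zG\evp$, the NS-functor $\Ivs$ is an isomorphism of categories between $\glAlg$ and the category $(\zG,\mathbf{1})\mbox{-}\mathtt{Alg}$ of $\zG$-graded classically commutative algebras. By Proposition~\ref{Jvsfun}, $J_{\zvs}$ is a natural isomorphism identifying $\GL^0(\boldsymbol{\zn};\cA)$ with $\GL^0(\boldsymbol{\zn};\Ivs(\cA))$. Setting $\widetilde{\zD}_{\cuA}:=\zD_{\Ivs^{-1}(\cuA)}\circ J_{\zvs}^{-1}$ for each $\cuA$ in $(\zG,\mathbf{1})\mbox{-}\mathtt{Alg}$ yields a natural transformation $\GL^0(\boldsymbol{\zn};-)\Rightarrow((-)^0)^\ts$ on this category, whose components are group morphisms. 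In view of \eqref{def:gdet0}, the lemma reduces to producing $\ze\in\{\pm 1\}$ and $t\in\Z$, common to all $\cuA$, such that $\widetilde{\zD}_\cuA=\ze\cdot\det_\cuA^{\,t}$.

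Next, I would restrict $\widetilde{\zD}$ to algebras of the form $R[\zG]:=R\ots_{\K}\K[\zG]$, where $R$ ranges over ordinary commutative $\K$-algebras. Each $R[\zG]$ satisfies $\zG_{R[\zG]^\ts}=\zG\evp=\zG$, so Proposition~\ref{cor: specialcase} provides canonical isomorphisms $\Mat^0(\boldsymbol{\zn};R[\zG])\simeq\Mat(n;R)$ and $(R[\zG]^0)^\ts=R^\ts$, natural in $R$. Transporting $\widetilde{\zD}$ through them produces a natural transformation $\GL(n;-)\Rightarrow((-))^\ts$ of functors on ordinary commutative $\K$-algebras, whose components are group morphisms. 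McDonald's theorem \cite{McD} then yields $\ze\in\{\pm 1\}$ and $t\in\Z$, independent of $R$, with $\widetilde{\zD}_{R[\zG]}=\ze\cdot\det_{R[\zG]}^{\,t}$ on all of $\GL^0(\boldsymbol{\zn};R[\zG])$.

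Finally, to promote this identity from crossed products to arbitrary $\cuA$ in $(\zG,\mathbf{1})\mbox{-}\mathtt{Alg}$, I would use the following embedding: let $R$ be the underlying ungraded commutative $\K$-algebra of $\cuA$, and define $\zi:\cuA\to R[\zG]$ by $\zi(a):=a\cdot e_{\degr{a}}$ on homogeneous $a$, extended by linearity. One checks directly that $\zi$ is an injective morphism of $\zG$-graded $\K$-algebras. By naturality of both $\widetilde{\zD}$ and $\det$, combined with the previous step applied to the image,
$$\zi\bigl(\widetilde{\zD}_\cuA(X)\bigr)=\widetilde{\zD}_{R[\zG]}(\zi_\ast X)=\ze\cdot\det_{R[\zG]}(\zi_\ast X)^t=\zi\bigl(\ze\cdot\det_\cuA(X)^t\bigr)$$
for every $X\in\GL^0(\boldsymbol{\zn};\cuA)$. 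Injectivity of $\zi$ yields $\widetilde{\zD}_\cuA(X)=\ze\cdot\det_\cuA(X)^t$, and transporting back through $J_{\zvs}$ via \eqref{def:gdet0} recovers \eqref{powergdet}. The main obstacle is the second paragraph: one must verify that the restriction indeed defines a functorial natural transformation on ordinary commutative algebras to which McDonald's classification applies; the remaining steps are routine reductions by naturality.
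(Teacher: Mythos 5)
Your proof is correct, but it takes a genuinely different route from the paper's. The paper does not reduce to McDonald's theorem: it \emph{generalizes McDonald's proof technique} directly in $\glAlg$, by showing that the functor $\GL^0(\boldsymbol{\zn};-)$ is represented by the graded localized polynomial algebra $T(\boldsymbol{\zn})=\K[X^i_{\;\,j},\,1/\Gdet^0(X)]$, applying the Yoneda Lemma to identify natural transformations $\GL^0(\boldsymbol{\zn};-)\Rightarrow((-)^0)^\ts$ with $\zG$-algebra morphisms $T(1)\to T(\boldsymbol{\zn})$, and then classifying the latter via the units of $T(\boldsymbol{\zn})$. Your route instead transports the problem twice before invoking McDonald's theorem as a black box: first from $\glAlg$ to $(\zG,\mathbf{1})$-$\mathtt{Alg}$ via $\Ivs$ and $J_\zvs$, then to ordinary commutative $\K$-algebras by restricting to crossed products $R[\zG]$ and conjugating by $P=\mathrm{diag}(e_{\zn_1},\dots,e_{\zn_n})$, and finally propagates the resulting identity back to an arbitrary $\cuA$ through the graded embedding $\zi:\cuA\hookrightarrow\cuA[\zG]$, $\zi(a)=a\,e_{\degr a}$. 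Each of these steps is sound: the conjugation isomorphism $\Mat^0(\boldsymbol{\zn};R[\zG])\simeq\Mat(n;R)$ is natural in $R$ because the elements $e_\zg$ are canonical, the embedding $\zi$ is a degree-preserving injective algebra morphism, and the naturality square for $\zi$ transfers the $\ze\det^t$ formula along it. What your approach buys is that it avoids having to reprove the classification of units in the graded localization $T(\boldsymbol{\zn})$; what it costs is the two-stage reduction and the final embedding argument, which is structurally the same device (pass to $\cA\ots_{\K}\K[S,S^{-1}]_\zl$ and descend by injectivity) that the paper itself deploys, but later, in Step 3b of the proof of Theorem \ref{Thm:E}. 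One small remark: if McDonald's theorem is applied over commutative $\K$-algebras rather than over all commutative rings, the units of $\K[x_{ij},1/\det]$ are $\K^\ts\cdot\det^{\Z}$, so the scalar is a priori only in $\K^\ts$; the group-morphism requirement in axiom AI then pins it down (evaluate at the identity matrix). Both the paper and your writeup state $\ze\in\{\pm1\}$, which is the form inherited from McDonald and is all that is needed for the uniqueness in Theorem \ref{thm:Gdet0}.
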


\begin{proof}
We generalize the proof in \cite{McD}.

According to \eqref{poly}, we denote the $(\zG,\zl)$-commutative algebra of graded polynomials, with homogeneous indeterminates $X^i_{\;\,j} $ of degrees
$$  \degr{X^i_{\;\,j} } := \zn_{j}-\zn_{i}\;,\quad \text{for all } i,j=1,\dots,n \,,$$
 by $\K[X^i_{\;\,j} \,|\, i,j=1, \ldots n ]$. Thus, $X:=(X^i_{\;\,j})_{i,j}$ is a formal $\boldsymbol{\zn}\ts\boldsymbol{\zn}$ graded matrix of degree $0\,$.

From \eqref{def:gdet0} and \eqref{Jsentry}, it follows that
$\Gdet^0(X)$   is an element of $\K[X^i_{\;\,j} \,|\, i,j=1, \ldots n ]$, and by localizing this algebra at the powers of $\Gdet^0(X)$, we get the $(\zG,\zl)$-commutative algebra
$$
T(\boldsymbol{\zn}):=\K\left[X^1_{\,\;1}, X^1_{\,\;2}, \ldots, X^n_{\,\;n}, \frac{1}{\Gdet^0(X)}\right]\;.
$$

For every object $\cA \in \Obj\big(\glAlg\big)\,$, we define the map
\bemaps
\zvy(\boldsymbol{\zn})_{\cA}:& \GL^0(\boldsymbol{\zn};\cA) & \raa & \Hom\left(T(\boldsymbol{\zn}), \cA\right) \\
    & (a^i_{\,\;j})_{ij} & \mapsto & \lp \f{a}:\, X^i_{\,\;j} \mapsto a^i_{\,\;j} \rp
\eemaps
This is an isomorphism of groups, which is natural in $\cA\,$. Indeed, for every $\zG$-algebra morphism $f\in \Hom(\cA,\cB)$, the following diagram of groups commutes.
\[
\begin{tikzpicture}
\matrix(m)[matrix of math nodes, row sep=3em, column sep =6em]
{
\GL^0(\boldsymbol{\zn}; \cA) & \GL^0(\boldsymbol{\zn}; \cB)\\
 \Hom(T(\boldsymbol{\zn}), \cA) & \Hom\left(T(\boldsymbol{\zn}), \cB\right) \\
};
\path[->]
(m-1-1) edge node[above]{$ \GL(f) $} (m-1-2)
            edge node[left]{$\zvy(\boldsymbol{\zn})_{\cA} $} node[right]{$\wr$} (m-2-1)
(m-2-1) edge node[below]{$ f\circ - $} (m-2-2)
(m-1-2) edge node[right]{$ \zvy(\boldsymbol{\zn})_{\cB} $}node[left]{$\wr$} (m-2-2)
;
\end{tikzpicture}
\]
This means that the covariant functor $\GL^0(\boldsymbol{\zn};-)$ is represented by the $(\zG,\zl)$-commutative algebra $T(\boldsymbol{\zn})$. In particular, the functor $\big((-)^{0}\big)^{\ts}$, which is equal to $\GL^{0}(\boldsymbol{\zm}; - )$ for $\boldsymbol{\zm}=0\in \zG$, is represented by $T(1)=\K\left[Y, \frac{1}{Y}\right]$, with indeterminate $Y$ of degree $0$.
We deduce the following bijection
\[\begin{array}{ccc}
\op{Nat}\Big( \GL^{0}(\boldsymbol{\zn};  - )\,,\, \big((-)^{0}\big)^{\ts}\Big)
&\xrightarrow{\sim}&
\op{Nat}\Big( \Hom(T(\boldsymbol{\zn}),-)\,,\, \Hom(T(1),-) \Big)\\
 \zD & \leftrightarrow & \zk:= \zvy(1)\circ \zD \circ \zvy(\boldsymbol{\zn})^{-1}
\end{array}\]
where $\op{Nat}( - , - )$ denotes the set of natural transformations between two functors.

On the other hand, thanks to Yoneda Lemma, there exists a bijection
 \[\begin{array}{ccc}
\op{Nat}\Big( \Hom(T(\boldsymbol{\zn}),-)\,,\, \Hom(T(1),-) \Big)
&\xrightarrow{\sim}&
 \Hom\lp T(1),T(\boldsymbol{\zn}) \rp \\
 \zk=\{\zk_{\cA} \,|\, \cA\in \glAlg \} & \leftrightarrow & \zvk
\end{array}\]
This correspondence goes as follows: any natural transformation $\zk$ defines a $\zG$-algebra morphism $\zvk:=\zk_{T(\boldsymbol{\zn})}\lp \id_{T(\boldsymbol{\zn})}\rp$ and conversely, given an algebra morphism $\zvk\,$, a natural transformation is completely defined by
$$\zk_{\cA}(f) := f\circ \zvk\;,$$
for all $f\in \Hom(T(\boldsymbol{\zn}),\cA)$ and all object $\cA\in \Obj\big(\glAlg\big)\,$.

Since $ T(1)=\K\left[Y, \frac{1}{Y}\right]$, an algebra morphism  $\zvk \in  \Hom\lp T(1),T(\boldsymbol{\zn}) \rp$
is characterized by $\zvk(Y)$, which is necessarily an invertible element of $T(\boldsymbol{\zn})$. By construction, the only possibilities are
$$ \zvk(Y)=\pm \lp \Gdet^0(X)\rp^{t} \;,$$
with  $t\in \Z\,$.
Hence, via the two above bijections, we finally obtain that any
natural transformation $\zD: \GL^0(\boldsymbol{\zn};-) \Rightarrow \big((-)^0\big)^{\ts}$
satisfies \eqref{powergdet}.
\end{proof}

Let $\zD$ be a natural transformation  satisfying axioms \textsc{ai} and \textsc{aii} of Theorem \ref{thm:Gdet0}.
The preceding lemma forces $\zD$ to satisfy \eqref{powergdet}, and axiom \textsc{aii} yields then $ \zD=\Gdet^{0} $.
This concludes the proof of Theorem \ref{thm:Gdet0}.

\begin{rema}
The graded determinant introduced in \cite{KNa84} satisfies
axioms \textsc{ai} and \textsc{aii} of Theorem \ref{thm:Gdet0}.
Hence it coincides with $\Gdet^0$.
\end{rema}

\subsection{Explicit formula}\label{sec:formulaGdet0}

Let $n\in \N$, $\op{S}_n$ be the group of permutations of $\{1, \ldots, n \}$ and $\zs\in \op{S}_n\,$.
The decomposition of the permutation $\zs$ into disjoint cycles can be written in terms of an auxiliary permutation $\widehat{\zs}\in\op{S}_n$ as follows,
\be\label{cycledecomp}
 \zs=\Big(\widehat{\zs}(1) \, \widehat{\zs}(2) \, \ldots
\widehat{\zs}(i_1)\Big)\Big(\widehat{\zs}(i_1+1) \, \widehat{\zs}(i_1+2) \, \ldots \widehat{\zs}(i_2)\Big)\cdots \Big(\widehat{\zs}(i_{m-1}+1) \, \widehat{\zs}(i_{m-1}+2) \, \ldots \widehat{\zs}(i_m)\Big)\;,
\ee
with $i_1<\ldots <i_m=n\,$.
This means
\be\label{ordre}
\begin{cases}
\zs\lp\widehat{\zs}(k)\rp = \widehat{\zs}(k+1)& \mbox{ if } k\notin \{i_1, \ldots,i_m\}\\
\zs\lp\widehat{\zs}(i_j)\rp = \widehat{\zs}(i_{j-1}+1) & \mbox{ for } j=1, \ldots,m
\end{cases}
\ee
with $i_0=0$ by convention.
Such a permutation $\widehat{\zs}$ is called an \emph{ordering} associated to $\zs$.
\begin{exa}
The permutation
$$
\zs=\lp \begin{array}{ccccc}1&2&3&4&5\\4&5&2&1&3 \end{array}\rp \in \op{S}_5
$$
can be written in cycle notation as
\beas
\zs=(14)(253) & \mbox{ or }& \zs=(532)(14).
\eeas
The corresponding orderings are respectively
\beas
\widehat{\zs}=\lp \begin{array}{ccccc}1&2&3&4&5\\1&4&2&5&3 \end{array}\rp
& \mbox{ and }&
\widehat{\zs}=\lp \begin{array}{ccccc}1&2&3&4&5\\5&3&2&1&4\end{array}\rp
\eeas
\end{exa}

Using the above notion of ordering, we derive a formula for the graded determinant $\Gdet^0$, extended to $\Mat^0(\boldsymbol{\zn};\cA)$ via its defining equation  \eqref{def:gdet0}.
\begin{thm}\label{FormulaGdeto}
Let $\boldsymbol{\zn}\in \zG^n$. The graded determinant of a $0$-degree matrix $X=(X^i_{\,\;j})\in \Mat^0(\boldsymbol{\zn};\cA)$  is given by the following formula
\be\label{exprgdet0}
\Gdet^0(X)= \sum_{\zs\in \op{S}_n} \op{sgn}(\zs) \, X^{\widehat{\zs}(1)}_{\;\;\zs\lp \widehat{\zs}(1) \rp} \cdot  X^{\widehat{\zs}(2)}_{\;\;\zs\lp \widehat{\zs}(2) \rp} \cdots  X^{\widehat{\zs}(n)}_{\;\;\zs\lp \widehat{\zs}(n) \rp}
\; ,\ee
where the right-hand side does not depend on the
 chosen orderings $\widehat{\zs}$ associated to each $\zs\in\op{S}_n$.
\end{thm}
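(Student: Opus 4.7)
The plan is to unfold the definition $\Gdet^0(X)=\det_{\cuA}(J_\zvs(X))$, where $\cuA$ is commutative (since $\zG=\zG\evp$ forces $\zl^\zvs=\mathbf{1}$), and identify each permutation term with the corresponding summand in \eqref{exprgdet0}. A preliminary expansion of \eqref{Jsentry} for $x=0$, together with biadditivity of $\zvs$, gives
\[
(J_\zvs(X))^{i}_{\,\;j}=\zvs(\zn_i,\zn_i)\,\zvs(\zn_i,\zn_j)^{-1}\,X^{i}_{\,\;j},
\]
and the classical formula in $\cuA$ reads $\det_{\cuA}(J_\zvs(X))=\sum_{\zs}\op{sgn}(\zs)\,\prod_{i}^{\!\star}\,(J_\zvs(X))^{i}_{\,\;\zs(i)}$.

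I would first establish the ordering-independence of the RHS of \eqref{exprgdet0}. Within each cycle $(a_1,\ldots,a_k)$ of $\zs$, the product $X^{a_1}_{\,\;a_2}X^{a_2}_{\,\;a_3}\cdots X^{a_k}_{\,\;a_1}$ has total degree $0$ (telescoping). A cyclic shift moves a factor of some degree $\zd=\zn_{a_{j+1}}-\zn_{a_j}$ past a block of total degree $-\zd$, producing a factor $\zl(\zd,-\zd)=\zl(\zd,\zd)^{-1}$. Since $\zG=\zG\evp$, $\zl(\zd,\zd)=1$. Reordering whole cycles produces factors $\zl(0,0)=1$. Hence the sum \eqref{exprgdet0} is independent of the choice of $\widehat{\zs}$.

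Next, I would fix $\zs$ and, using the commutativity of $\cuA$, reorder the $\star$-product in the $\zs$-term as $\prod_{k}^{\!\star}(J_\zvs(X))^{\widehat{\zs}(k)}_{\,\;\zs(\widehat{\zs}(k))}$. Grouping by cycles and letting $\alpha_i:=\zn_{a_i}$, $\beta_i:=\alpha_{i+1}-\alpha_i$ (so $\sum_i\beta_i=0$), the conversion $\star\rightsquigarrow\cdot$ within a single cycle introduces the factor $\prod_{i<j}\zvs(\beta_i,\beta_j)$, while the definition of $J_\zvs$ contributes $\prod_i\zvs(\alpha_i,\alpha_i)\zvs(\alpha_i,\alpha_{i+1})^{-1}=\prod_i\zvs(\alpha_i,\beta_i)^{-1}$. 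Using biadditivity,
\[
\prod_i\zvs(\alpha_i,\beta_i)=\zvs\bigl(\alpha_1,\textstyle\sum_i\beta_i\bigr)\prod_{l<i}\zvs(\beta_l,\beta_i)=\prod_{l<i}\zvs(\beta_l,\beta_i),
\]
so the two prefactors cancel exactly. Thus the $\star$-product over each cycle collapses to the corresponding $\cdot$-product of $X$-entries in $\cA$.

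Finally, since each cycle subproduct sits in degree $0$, the various cycle blocks commute in $\cA$ (with trivial $\zl$-factors), and their concatenation in the order prescribed by $\widehat{\zs}$ gives exactly $X^{\widehat{\zs}(1)}_{\,\;\zs(\widehat{\zs}(1))}\cdots X^{\widehat{\zs}(n)}_{\,\;\zs(\widehat{\zs}(n))}$. Summing over $\zs$ with signs yields \eqref{exprgdet0}. The only genuinely delicate point is the intra-cycle $\zvs$-bookkeeping; the rest is forced by $\zG=\zG\evp$ and the biadditivity shared by $\zl$ and $\zvs$.
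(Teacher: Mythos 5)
Your proposal is correct and takes essentially the same route as the paper: unfold $\Gdet^0(X)=\det_{\cuA}(J_\zvs(X))$ over the commutative algebra $\cuA$, use the ordering $\widehat{\zs}$, observe that the cycle blocks have degree $0$, and show each cycle's $\star$-product of $J_\zvs$-entries collapses to the $\cdot$-product of the $X$-entries. The only difference is that the paper establishes the intra-cycle cancellation by a telescoping induction, whereas you produce it in closed form via biadditivity ($\prod_{i<j}\zvs(\beta_i,\beta_j)$ against $\prod_i\zvs(\alpha_i,\beta_i)^{-1}$), which is a harmless computational variant of the same argument.
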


\begin{proof}
Let $\zvs\in \f{S}(\zl)\,$, and define $(\und{X}^i_{\,\;j})=J_{\zvs}(X)\,$.
By Theorem \ref{thm:Gdet0}, the graded determinant satisfies $\Gdet^{0}_{\cA}(X)=\det_{\cuA}(\und{X})\,$. Since the product ``$\star\,$'' in $\cuA$ is commutative, we obtain
$$
\Gdet^0_\cA(X)= \sum_{\zs\in \op{S}_r} \op{sgn}(\zs)\, \und{X}^{\widehat{\zs}(1)}_{\;\;\zs\lp \widehat{\zs}(1) \rp} \star   \und{X}^{\widehat{\zs}(2)}_{\;\;\zs\lp \widehat{\zs}(2) \rp} \star \cdots \star \und{X}^{\widehat{\zs}(r)}_{\;\;\zs\lp \widehat{\zs}(r) \rp}\;,
$$
for any ordering $\widehat{\zs}$ associated to $\zs$.

Let us now consider one monomial
$ \und{X}^{\widehat{\zs}(1)}_{\;\;\zs\lp \widehat{\zs}(1) \rp} \star   \und{X}^{\widehat{\zs}(2)}_{\;\;\zs\lp \widehat{\zs}(2) \rp} \star \cdots \star \und{X}^{\widehat{\zs}(r)}_{\;\;\zs\lp \widehat{\zs}(r) \rp} $.
The permutation $\zs$ admits a decomposition into disjoint cycles as in \eqref{cycledecomp}, therefore
\beas
 \prod_{j=1}^{n} \und{X}^{\widehat{\zs}(i)}_{\;\;\zs\lp \widehat{\zs}(i) \rp}&=&
\lp\prod_{j=1}^{i_1} \und{X}^{\widehat{\zs}(j)}_{\;\;\zs\lp \widehat{\zs}(j) \rp}\rp
\star
\lp\prod_{j=i_1+1}^{i_2} \und{X}^{\widehat{\zs}(j)}_{\;\;\zs\lp \widehat{\zs}(j) \rp}\rp
\star
\ldots
\star
\lp\prod_{j=i_{m-1}+1}^{n} \und{X}^{\widehat{\zs}(j)}_{\;\;\zs\lp \widehat{\zs}(j) \rp}\rp
\eeas
with $\prod$ denoting the product with respect to ``$\star\,$''.

Since $J_{\zvs}(X)$ is a matrix of degree $0\,$, the degree of its entries is given by  $\op{deg}\lp \und{X}^{\widehat{\zs}(i)}_{\;\;\zs\lp \widehat{\zs}(i) \rp}\rp=\zn_{\zs\lp \widehat{\zs}(i) \rp}-\zn_{\widehat{\zs}(i)}\,$. In view of \eqref{ordre}, the $m$ products above are then of degree $0$ and we deduce
\beas
 \prod_{j=1}^{n} \und{X}^{\widehat{\zs}(i)}_{\;\;\zs\lp \widehat{\zs}(i) \rp}&=&
\lp\prod_{j=1}^{i_1} \und{X}^{\widehat{\zs}(j)}_{\;\;\zs\lp \widehat{\zs}(j) \rp}\rp
\cdot
\lp\prod_{j=i_1+1}^{i_2} \und{X}^{\widehat{\zs}(j)}_{\;\;\zs\lp \widehat{\zs}(j) \rp}\rp
\cdot
\ldots
\cdot
\lp\prod_{j=i_{m-1}+1}^{n} \und{X}^{\widehat{\zs}(j)}_{\;\;\zs\lp \widehat{\zs}(j) \rp}\rp \;.
\eeas

Using $\und{X}^{i}_{\,\;j}= \zvs(\zn_{i}, \zn_{i}-\zn_{j})\, X^{i}_{\,\;j}$ (see \eqref{Jsentry}) and the definition of ``$\star\,$'', we get
\beas
&& \und{X}^{\widehat{\zs}(j)}_{\;\;\widehat{\zs}(j+1)} \star \lp \und{X}^{\widehat{\zs}(j+1)}_{\;\;\widehat{\zs}(j+2)} \star \cdots \star \und{X}^{\widehat{\zs}(i_1)}_{\;\;\widehat{\zs}(1)} \rp =\\
&=&
 \zvs\lp\zn_{\widehat{\zs}(j+1)}, \zn_{\widehat{\zs}(1)}- \zn_{\widehat{\zs}(j+1)}\rp
 \zvs\lp  \zn_{\widehat{\zs}(j)},  \zn_{\widehat{\zs}(1)}- \zn_{\widehat{\zs}(j)}\rp^{-1}\,
 X^{\widehat{\zs}(j)}_{\;\;\widehat{\zs}(j+1)} \cdot
 \lp \und{X}^{\widehat{\zs}(j+1)}_{\;\;\widehat{\zs}(j+2)} \star \cdots \star \und{X}^{\widehat{\zs}(i_1)}_{\;\;\widehat{\zs}(1)} \rp\;,
\eeas
 for any index $1\leq j < i_{1}$. By induction, we then obtain
$$
\und{X}^{\widehat{\zs}(1)}_{\;\;\zs\lp \widehat{\zs}(1) \rp} \star   \und{X}^{\widehat{\zs}(2)}_{\;\;\zs\lp \widehat{\zs}(2) \rp} \star \ldots \star  \und{X}^{\widehat{\zs}(i_{1})}_{\;\;\zs\lp \widehat{\zs}(i_{1}) \rp}
=
X^{\widehat{\zs}(1)}_{\;\;\zs\lp \widehat{\zs}(1) \rp} \cdot  X^{\widehat{\zs}(2)}_{\;\;\zs\lp \widehat{\zs}(2) \rp} \cdot \ldots \cdot   X^{\widehat{\zs}(i_{1} )}_{\;\;\zs\lp \widehat{\zs}(i_{1} ) \rp}\;.
$$
The same holds for the monomials built from the other cycles of $\zs$.
The claim follows.
\end{proof}

\begin{rema}
The use of specific orderings associated to permutations also appears in the Moore determinant, defined for Hermitian quaternionic matrices  (see e.g. \cite{Asl}).
\end{rema}

\subsection{Graded determinant on endomorphisms}

Let $M$ be a free graded $\cA$-module of finite rank.
The graded determinant carry over to endomorphisms.
\begin{prop}\label{GdetEnd}
Let $(e_i)$ be a basis of $M$ of degree $\boldsymbol{\zn}$. Then, the composite
\[
\begin{tikzpicture}
    \matrix(m)[matrix of math nodes, column sep=1.5em, row sep=2em]
{
\iEnd_{\cA}^{\,0}(M)& \Mat^0(\boldsymbol{\zn};\cA) && \cA^0\\
};
\path[->]
(m-1-1) edge node[above] {$\sim$} (m-1-2)
(m-1-2) edge node[above] {$\Gdet^0$} (m-1-4);
\end{tikzpicture}
\]
does not depend on the choice of basis of $M$ and thus defines the graded determinant on $0$-degree endomorphisms of $M$. It satisfies
$$
\Gdet^0(f)=\det\lp \widehat{\Ivs}(f) \rp \;,
$$
for all $ \zvs\in\f{S}(\zl)$,
where $\widehat{\Ivs}$ is the NS-functor associated to $\zvs$.
\end{prop}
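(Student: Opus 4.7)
The plan is to prove the explicit formula $\Gdet^0(f)=\det\bigl(\widehat{I}_{\zvs}(f)\bigr)$ first, for any fixed $\zvs\in\f{S}(\zl)$ and any fixed basis $(e_i)$, and then deduce basis-independence of the composite from it (the right-hand side being manifestly basis-free, as a classical determinant of an $\cuA$-module endomorphism).

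First, I would fix a basis $(e_i)$ of $M$ of degree $\boldsymbol{\zn}$ and let $F=(F^i_{\;j})\in\Mat^0(\boldsymbol{\zn};\cA)$ be the matrix of $f$ so that $f(e_j)=\sum_i e_i\cdot F^i_{\;j}$. By Lemma~\ref{lem:Ivsbasis}, the same family $(e_i)$ is a basis of $\und{M}=\widehat{I}_{\zvs}(M)$, and since $\widehat{I}_{\zvs}$ is the identity on morphisms, $\widehat{I}_{\zvs}(f)$ coincides with $f$ as a set map. Denoting its matrix in $\und{M}$ by $G=(G^i_{\;j})$, one has $f(e_j)=\sum_i e_i\star G^i_{\;j}$. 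The key computation is then to convert between the two products: using \eqref{abarmod} and the induced right $\cuA$-action on $\und{M}$ (namely $m\star a=\zvs(\degr{m},\degr{a})\,m\cdot a$ for homogeneous $m,a$, derived from \eqref{rlmod} and the formula $\zls=\zl\,\zvs(\cdot,\cdot)\zvs(\cdot,\cdot)^{-1}$), one obtains $G^i_{\;j}=\zvs(\zn_i,-\zn_i+\zn_j)^{-1}F^i_{\;j}$, which matches exactly formula \eqref{Jsentry} specialized to $x=0$. Hence $G=J_{\zvs}(F)$, and applying Theorem~\ref{thm:Gdet0} gives
\[
\Gdet^0(F)=\det\nolimits_{\cuA}\bigl(J_{\zvs}(F)\bigr)=\det\nolimits_{\cuA}(G)=\det\bigl(\widehat{I}_{\zvs}(f)\bigr).
\]

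Basis-independence then follows immediately: the right-hand side is an invariant of $f$ alone. Alternatively, it can be verified directly from Theorem~\ref{thm:Gdet0} and Proposition~\ref{Cor:B}: if $(e'_i)$ is another basis of $M$ (of degree $\boldsymbol{\zm}$) representing $f$ by $F'$, the change-of-basis matrix $P$ lies in $\Mat^0(\boldsymbol{\zn}\ts\boldsymbol{\zm};\cA)$ and $F=P^{-1}F'P$. Since all three matrices are homogeneous of degree $0$, Proposition~\ref{Cor:B}(2) yields $J_{\zvs}(F)=J_{\zvs}(P)^{-1}J_{\zvs}(F')J_{\zvs}(P)$, and the classical invariance of $\det_{\cuA}$ under conjugation together with $\Gdet^0=\det_{\cuA}\circ J_{\zvs}$ gives $\Gdet^0(F)=\Gdet^0(F')$.

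The main obstacle is the bookkeeping in the first step, namely carefully identifying the right $\cuA$-module structure on $\und{M}$ so that the matrix of $\widehat{I}_{\zvs}(f)$ in the basis $(e_i)$ is exactly $J_{\zvs}(F)$ and not a rescaled variant. Once the sign/multiplier computation is consistent with \eqref{Jsentry}, everything else is formal, and the fact that the formula holds for every $\zvs\in\f{S}(\zl)$ is automatic because the left-hand side $\Gdet^0(f)$ is independent of $\zvs$ by Theorem~\ref{thm:Gdet0}.
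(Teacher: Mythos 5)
Your proposal is correct and follows essentially the same route as the paper: the paper asserts that the square relating $J_{\zvs}$ (on matrices) to $\widehat{I}_{\zvs}$ (on endomorphisms) commutes ``by definition of $J_{\zvs}$'' and then reads off both the formula $\Gdet^0(f)=\det(\widehat{I}_{\zvs}(f))$ and basis-independence; you simply unpack that commutativity explicitly by computing the matrix of $\widehat{I}_{\zvs}(f)$ in the basis $(e_i)$ (via the induced right $\cuA$-action $m\star a=\zvs(\degr m,\degr a)\,m\cdot a$) and matching it against the specialization of \eqref{Jsentry} at $x=0$, where $\zvs(0,\zn_j)=1$ by biadditivity. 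The extra conjugation-invariance argument you give at the end is a correct but redundant alternative to the observation that the right-hand side is an invariant of $f$.
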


\begin{proof}
By definition of $J_{\zvs}$ and of the classical determinant, the following diagram commutes.
\[
\begin{tikzpicture}
 \matrix(m)[matrix of math nodes, column sep=4em, row sep=2em]
{
 \Mat^0(\boldsymbol{\zn};\cA) & \Mat^0(\boldsymbol{\zn};\cuA) & \cuA^0=\cA^0\\
 \iEnd_{\cA}^{\,0}(M) & \iEnd_{\cuA}^{\,0}(\und{M})  & \\
};
\path[->]
(m-1-1) edge node[above] {$J_{\zvs}$} node[below]{$\sim$} (m-1-2)
(m-1-2) edge node[above]{$\det$} (m-1-3)
(m-2-1) edge node[above]{$\sim$} node[below]{$\widehat{\Ivs}$}(m-2-2)
(m-2-2) edge node[below right]{$\det$} (m-1-3);
\path[solid, shorten <= 7pt, shorten >= 7pt]
(m-1-1) edge node[auto] {$\wr $} (m-2-1)
(m-1-2) edge node[auto] {$\wr $} (m-2-2);
\end{tikzpicture}
\]
This means that $\Gdet^0(f)=\det\lp \widehat{\Ivs}(f) \rp$, for $f\in \iEnd^{\,0}_{\cA}(M)\,$. Hence, $\Gdet^0(f)$ is independent of the chosen basis.
\end{proof}

\subsection{Properties}

Most of the classical properties of the determinant over commutative rings still hold true for $\Gdet^0$. We provide some of them, see also \cite{KNa84}.

\begin{prop}\label{Prop:Inv}
Let $M$ be a free graded $\cA$-module of rank $n$, admitting bases of degrees $\boldsymbol{\zm},\boldsymbol{\zn}\in\zG^n$.
\begin{enumerate}
    \item $u\in\iEnd^{\,0}(M)$ is bijective if and only if $\Gdet^0(u)$ is invertible in $\cA^0$;
    \item $\Gdet^0(P^{-1}XP)=\Gdet^0(X)$ for all $X\in\Mat^0(\boldsymbol{\zn};\cA)$ and all invertible $P\in\Mat^0(\boldsymbol{\zn}\ts \boldsymbol{\zm};\cA)$;
    \item $\Gdet^0$ is $\cA^0$-multilinear with respect to rows and columns.
\end{enumerate}
\end{prop}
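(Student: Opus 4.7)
The plan is to deduce all three statements from the defining identity $\Gdet^{0} = \det_{\cuA}\circ J_{\zvs}$ (Theorem \ref{thm:Gdet0}), the multiplicative properties of $J_{\zvs}$ on the degree-$0$ subalgebra (Proposition \ref{Cor:B}), the basis-independence on endomorphisms (Proposition \ref{GdetEnd}), and the classical theory of determinants over the commutative $\zG$-algebra $\cuA = I_{\zvs}(\cA)$ (recall $\zG = \zG\evp$, so $\cuA$ is honestly commutative).

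For point (1), I would first observe that Proposition \ref{Cor:B}(2) makes $J_{\zvs}$ restricted to $\Mat^{0}(\boldsymbol{\zn};\cA)$ a multiplicative $\K$-linear bijection onto $\Mat^{0}(\boldsymbol{\zn};\cuA)$, hence a $\K$-algebra isomorphism; in particular, it preserves invertibility. Next, via the identification $\iEnd_{\cA}^{\,0}(M) \simeq \Mat^{0}(\boldsymbol{\zn};\cA)$ of Proposition \ref{prop:iso}, bijectivity of $u \in \iEnd_{\cA}^{\,0}(M)$ amounts to invertibility of its matrix $X$, the inverse being automatically of degree $0$ by a routine grading check (the inverse of a homogeneous bijection preserves the grading). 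Since $\cuA$ is commutative, $J_{\zvs}(X)$ is invertible if and only if $\det_{\cuA}(J_{\zvs}(X)) \in \cuA^{\times}$; but $\det_{\cuA}(J_{\zvs}(X)) = \Gdet^{0}(u)$ is homogeneous of degree $0$, hence lies in $\cuA^{0} = \cA^{0}$, and a degree-$0$ element of a $\zG$-graded algebra is invertible in the ambient algebra if and only if it is invertible in the degree-$0$ part (any inverse must itself be of degree $0$). Combining these equivalences yields (1).

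Point (2) is an immediate consequence of Proposition \ref{GdetEnd}: viewing $P$ as the change-of-basis matrix between two bases of $M$ of degrees $\boldsymbol{\zn}$ and $\boldsymbol{\zm}$, both $X$ and $P^{-1}XP$ represent the same endomorphism $u$, so $\Gdet^{0}(X) = \Gdet^{0}(u) = \Gdet^{0}(P^{-1}XP)$. For point (3), I would unfold the explicit formula \eqref{Jsentry}: each entry $J_{\zvs}(X)^{i}_{\,j}$ of a degree-$0$ matrix is the entry $X^{i}_{\,j}$ scaled by a fixed element of $\K^{\times}$ depending only on $(i,j)$ and $\boldsymbol{\zn}$. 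Since $\cA^{0}$-scalars commute with $\K$-scalars and sit in the centre of $\cA$, the map $J_{\zvs}$ is $\cA^{0}$-linear on each row and on each column. Composing with the classical determinant $\det_{\cuA}$, which is multilinear in the rows and columns over the commutative ring $\cuA$ and hence a fortiori $\cA^{0}$-multilinear, yields the desired $\cA^{0}$-multilinearity of $\Gdet^{0}$.

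The main hurdle is not a deep computation but the careful tracking of the grading in point (1): one must verify that a bijective degree-$0$ endomorphism has a degree-$0$ inverse, and that invertibility in $\cuA$ of an element in $\cuA^{0}$ descends to invertibility in $\cuA^{0}$. Both are elementary grading arguments, but they are what ties the graded and classical pictures together so that the classical criterion for invertibility can be imported wholesale.
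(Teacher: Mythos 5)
Your proof is correct and follows essentially the same route as the paper: (1) transfer invertibility across the NS-functor and invoke the classical criterion over the commutative algebra $\cuA$, with the elementary grading check that a bijective degree-$0$ map has a degree-$0$ inverse and that a degree-$0$ element invertible in $\cA$ has its inverse in $\cA^0$; (2) cite Proposition~\ref{GdetEnd}. The only cosmetic difference is in (3), where you compose the $\cA^0$-row/column-linear map $J_{\zvs}$ (visible from \eqref{Jsentry} at degree $0$, where the prefactor is a $\K^{\times}$-scalar depending only on $(i,j)$ and $\boldsymbol{\zn}$) with the classical multilinear $\det_{\cuA}$, while the paper appeals directly to the explicit Leibniz-type formula \eqref{exprgdet0}; since that formula is itself obtained by unwinding $\det_{\cuA}\circ J_{\zvs}$, the two arguments are the same observation at different levels of unwrapping.
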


\begin{proof}
\textrm{(1)} Invertibility of $0$-degree elements is preserved by the arrow functor $\widehat{\Ivs}\,$. Hence, the statement follows from the analogous result over commutative algebras.

\textrm{(2)} This is a direct consequence of Proposition \ref{GdetEnd}.

\textrm{(3)} This follows from the explicit formula \eqref{exprgdet0} of the graded determinant.

\end{proof}

The permutation  of the rows of  a graded matrix $X\in\Mat^0(\boldsymbol{\zn};\cA)$ by $\zs\in\op{S}_n$  produces a matrix $P_{\zs}\cdot X\in \Mat^0(\boldsymbol{\zm}\ts\boldsymbol{\zn};\cA)\,$, where  $P_{\zs}=(\zd^{i}_{\zs(j)})_{i,j}$ and $\zm_{i}=\zn_{\zs(i)}\,$.
In general, the matrix $P_{\zs}\cdot X$  is not homogeneous as a matrix in $\Mat(\boldsymbol{\zn};\cA)\,$. We circumvent this difficulty below, by modifying the permutation matrices.

Let us consider the auxiliary algebra $\cB:=\cA \ots_{\K}\K[S,S^{-1}]_{\zl}$ (see \eqref{Blambda}), with $S$ a finite generating set of $\zG$. Then, we can choose invertible homogeneous elements $\f{t}_{\zg}\in \cB^{\ts}\cap \cB^{\zg}$ for all $\zg\in\zG$.
\begin{lem}\label{permutationMatrx}
The following map is a group morphism,
\beas
\op{S}_n & \to & \GL^0(\boldsymbol{\zn};\cB)\\
\zs & \mapsto &  P(\zs):=\lp \zd^i_{\zs(j)}\, \f{t}_{\zn_i}^{-1}\f{t}_{\zn_j}\rp_{i,j}
\eeas
which satisfies $\Gdet^0(P(\zs))=\op{sgn}(\zs)$.
\end{lem}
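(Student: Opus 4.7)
The argument splits naturally into verifying three claims: that each $P(\zs)$ belongs to $\Mat^0(\boldsymbol{\zn};\cB)$, that $P$ is multiplicative, and that $\Gdet^0(P(\zs))=\op{sgn}(\zs)$. I would dispatch the first two together by a direct index computation, using nothing more than the existence of the invertible homogeneous elements $\f{t}_\zg\in\cB$. The only nonzero entry of $P(\zs)$ in column $j$ sits in row $\zs(j)$ and equals $\f{t}_{\zn_{\zs(j)}}^{-1}\f{t}_{\zn_j}$, a homogeneous element of degree $\zn_j-\zn_{\zs(j)}$, so $P(\zs)\in\Mat^0(\boldsymbol{\zn};\cB)$.

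For multiplicativity, a straightforward evaluation of $(P(\zs)P(\zt))^i_{\;j}$ shows that the Kronecker deltas in the two factors force the summation index to $k=\zt(j)$ and the row index to $i=\zs\zt(j)$, while the product of middle factors $\f{t}_{\zn_k}\f{t}_{\zn_k}^{-1}$ collapses to $1$ by associativity and the definition of inverse, yielding $P(\zs\zt)^i_{\;j}$. Together with the obvious identity $P(\id)=I_n$, this shows that $P$ is a group morphism $\op{S}_n\to\GL^0(\boldsymbol{\zn};\cB)$, with $P(\zs)^{-1}=P(\zs^{-1})$.

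For the determinant, my plan is to apply the explicit formula (\ref{exprgdet0}) from Theorem~\ref{FormulaGdeto} to $P(\zs)$. The product $\prod_k P(\zs)^{\widehat{\zs'}(k)}_{\;\;\zs'(\widehat{\zs'}(k))}$ is nonzero only if $\widehat{\zs'}(k)=\zs(\zs'(\widehat{\zs'}(k)))$ for every $k$, which forces $\zs'=\zs^{-1}$. A single monomial therefore survives the summation, contributing the sign $\op{sgn}(\zs^{-1})=\op{sgn}(\zs)$. It remains to show that the remaining product of $\f{t}_{\zn_{\widehat{\zs^{-1}}(k)}}^{-1}\f{t}_{\zn_{\zs^{-1}(\widehat{\zs^{-1}}(k))}}$ equals $1$. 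I would choose the ordering $\widehat{\zs^{-1}}$ compatible with the cycle decomposition of $\zs^{-1}$: formulas (\ref{cycledecomp})--(\ref{ordre}) then show that within each cycle-block the consecutive factors share a middle pair $\f{t}_\zg\f{t}_\zg^{-1}$, and each block telescopes to $\f{t}_{\zn_{b}}^{-1}\f{t}_{\zn_{b}}=1$ (where $b$ is the first index of the block). Multiplying the blocks yields $1$, hence $\Gdet^0(P(\zs))=\op{sgn}(\zs)$.

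The only real obstacle is reconciling the telescoping with the ordering convention of (\ref{exprgdet0}); this is precisely what the independence statement in Theorem~\ref{FormulaGdeto} guarantees, so the cycle-compatible ordering I exploit gives the same answer as any other. Since all cancellations used are associativity-based and involve pairs $\f{t}_\zg\f{t}_\zg^{-1}$ that equal $1$ from either side, no commutation considerations ever enter the computation.
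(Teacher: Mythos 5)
Your proof is correct and fills in precisely the "direct computations" the paper leaves implicit. The verification that $P(\zs)\in\Mat^0(\boldsymbol{\zn};\cB)$, the collapse of $(P(\zs)P(\zt))^i_{\;j}$ via the Kronecker deltas, and the telescoping of the cycle blocks in the explicit formula \eqref{exprgdet0} are all sound, and you correctly observe that only associativity and the identity $\f{t}_\zg\f{t}_\zg^{-1}=1$ are used, so no commutation issues arise; the appeal to the ordering-independence in Theorem~\ref{FormulaGdeto} to select a cycle-compatible $\widehat{\zs^{-1}}$ is exactly the right justification.

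One stylistic alternative worth noting: since you have already shown $P$ is a group morphism and $\Gdet^0$ is a group morphism, it would suffice to check $\Gdet^0(P(\zs))=-1$ on a single transposition $\zs=(a\,b)$, where $P(\zs)$ differs from the identity only in a $2\ts 2$ block. This avoids invoking the full formula of Theorem~\ref{FormulaGdeto} and the discussion of orderings, but your route is equally valid and perhaps more transparent about what is happening entry-by-entry.
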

\begin{proof}
The statements follow from direct computations.
\end{proof}
From this lemma and multiplicativity of $\Gdet^0$, we deduce the effect of permutation of rows or columns on $\Gdet^0$.
\begin{prop}\label{sigmaX}
Let $\boldsymbol{\zn}\in \zG^n$ and  $X\in \Mat^0(\boldsymbol{\zn};\cA)$. We have for every $\zs\in\op{S}_n\,$,
$$
\Gdet^0_{\cB}(P(\zs)\cdot X)=\Gdet^0_{\cB}( X\cdot P(\zs))=\op{sgn}(\zs)\,  \Gdet^0_{\cA}(X)\;.
$$
\end{prop}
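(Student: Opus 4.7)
The plan is to combine multiplicativity of $\Gdet^0$ on $0$-degree matrices with the two ingredients already in place: Lemma \ref{permutationMatrx}, which computes $\Gdet^0_{\cB}(P(\zs)) = \op{sgn}(\zs)$, and the naturality of $\Gdet^0$ in the algebra argument, which will let us identify $\Gdet^0_{\cB}(X)$ with $\Gdet^0_{\cA}(X)$.

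First, I would observe that $X \in \Mat^0(\boldsymbol{\zn};\cA)$ sits inside $\Mat^0(\boldsymbol{\zn};\cB)$ via the canonical inclusion $\cA \hookrightarrow \cB=\cA\ots_\K \K[S,S^{-1}]_\zl$, $a\mapsto a\ots 1$, which is a morphism in $\glAlg$; since it preserves degree $0$, naturality (axiom \textsc{ai} of Theorem \ref{thm:Gdet0}) applied to this morphism gives $\Gdet^0_{\cB}(X)=\Gdet^0_{\cA}(X)$. At the same time, $P(\zs)\in\GL^0(\boldsymbol{\zn};\cB)$ by Lemma \ref{permutationMatrx}, so both $P(\zs)\cdot X$ and $X\cdot P(\zs)$ lie in $\Mat^0(\boldsymbol{\zn};\cB)$.

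Next, I would invoke multiplicativity. Pick any $\zvs\in\f{S}(\zl)$ and use the defining relation $\Gdet^0_{\cB}=\det_{\cuB}\circ J_{\zvs,\cB}$ together with Proposition \ref{Cor:B}(2), which states that $J_{\zvs}$ is multiplicative as soon as one factor is homogeneous of degree~$0$. Since classical determinant over the commutative algebra $\cuB^0$ is multiplicative, we obtain
\begin{align*}
\Gdet^0_{\cB}(P(\zs)\cdot X) &= \det_{\cuB}\bigl(J_{\zvs}(P(\zs))\, J_{\zvs}(X)\bigr) = \Gdet^0_{\cB}(P(\zs))\cdot \Gdet^0_{\cB}(X),\\
\Gdet^0_{\cB}(X\cdot P(\zs)) &= \Gdet^0_{\cB}(X)\cdot \Gdet^0_{\cB}(P(\zs)).
\end{align*}
Substituting $\Gdet^0_{\cB}(P(\zs))=\op{sgn}(\zs)\in\{\pm 1\}\subset Z(\cB^0)$ and using the naturality identification $\Gdet^0_{\cB}(X)=\Gdet^0_{\cA}(X)$ yields the two equalities of the statement.

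The only subtlety I anticipate is that $X$ need not be invertible, whereas multiplicativity in axiom \textsc{ai} is stated on $\GL^0$. This is harmless: the extension of $\Gdet^0$ to all of $\Mat^0(\boldsymbol{\zn};\cB)$ through formula \eqref{def:gdet0} is by definition $\det_{\cuB}\circ J_{\zvs,\cB}$, and Proposition \ref{Cor:B}(2) together with the multiplicativity of the classical determinant on $\Mat^0$ give multiplicativity of $\Gdet^0_{\cB}$ on products where one factor lies in $\GL^0$, which covers exactly the situation at hand. The same remark handles the naturality identification, since the polynomial formula \eqref{exprgdet0} from Theorem \ref{FormulaGdeto} evaluates identically on entries of $\cA$ and on their images in $\cB$.
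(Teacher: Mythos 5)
Your argument is correct and fills in exactly the ingredients the paper's one-line proof gestures at: Lemma \ref{permutationMatrx} for $\Gdet^0_{\cB}(P(\zs))=\op{sgn}(\zs)$, multiplicativity of $\Gdet^0$ on $\Mat^0$ via $\Gdet^0=\det\circ J_{\zvs}$ and Proposition \ref{Cor:B}(2), and the compatibility $\Gdet^0_{\cB}(X)=\Gdet^0_{\cA}(X)$ coming from naturality of the defining formula \eqref{def:gdet0} under the embedding $\cA\hookrightarrow\cB$. You correctly anticipated and resolved the only real subtlety, namely that $X$ need not be invertible, by passing to the extension of $\Gdet^0$ to $\Mat^0$ and using the explicit formula.
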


\subsection{Graded determinant in particular bases}

Assume $\zG$ is of finite order $N$. According to Section \ref{section:blockmatrix},
one can order a degree $\boldsymbol{\zm}\in\zG^n$, so that graded matrices in $\Mat^0(\boldsymbol{\zm};\cA)$ are in block form \eqref{Matrixblock}.
The graded determinant $\Gdet^0$ coincides with the one introduced in \cite{COP} on such matrices.
\begin{prop}\label{propProperties}
Let $\boldsymbol{\zm}\in\zG^n$ be an ordered degree.
 The graded determinant $\Gdet^0$ has the following properties:
\begin{enumerate}
    \item for any block-diagonal matrix $D=(D_{uu})_{u=1,\ldots,N} \in \GL^0(\boldsymbol{\zm};\cA)\,$,
    $$ \Gdet^0
\lp \begin{array}{c|c|c}
        D_{11} &  & \\
        \hline
           &\ddots & \\
        \hline
           &    & D_{NN} \end{array} \rp
=\prod_{u=1}^{N} \det(D_{uu}) \;; $$
    \item for any upper or lower block-unitriangular matrix
        \[
        T= \lp \begin{array}{c|c|c}
        \I & \ast &\ast \\
        \hline
           &\ddots & \ast\\
        \hline
           &    & \I \end{array} \rp
        \quad \mbox{ or } \quad
        \lp \begin{array}{c|c|c}
        \I & & \\
        \hline
         \ast  &\ddots & \\
        \hline
         \ast  &  \ast  & \I \end{array} \rp
        \;,\]
        we have
        $$ \Gdet^0(T)=1 \;;$$
        \item if $p\in\N$ and $\cA$ is a $\big((\Z_2)^p, (-1)^{\la -,- \ra}\big)$-commutative algebra, the morphism $\Gdet^0_{\cA}\,$, from $\GL^0(\boldsymbol{\zm};\cA)$ to $(\cA^0)^\ts$, coincides with the graded determinant introduced in \cite{COP} .
\end{enumerate}
\end{prop}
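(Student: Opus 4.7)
The plan for parts (1) and (2) is to pass through the identification $\Gdet^0_\cA(X)=\det_\cuA(J_\zvs(X))$ provided by Theorem \ref{thm:Gdet0}, for an arbitrary choice of $\zvs\in\fS(\zl)$, and then to exploit that $J_\zvs$ is compatible with the block decomposition induced by the ordering of $\boldsymbol{\zm}$. Because the ordered degree $\boldsymbol{\zm}$ places all rows and columns of each diagonal block $(u,u)$ at the common degree $\zg_u$, the entries of the diagonal blocks of both $D$ and $T$ lie in $\cA^0$. Specializing the formula \eqref{Jsentry} to $x=0$ and $\zn_i=\zn_j$, and using $\zvs(-,0)=\zvs(0,-)=1$ for any multiplier, one sees that $J_\zvs$ acts as the identity on these entries. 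Since $J_\zvs$ acts entrywise, it also preserves zero entries. Consequently $J_\zvs(D)$ is the very same block-diagonal matrix, now regarded in $\Mat^0(\boldsymbol{\zm};\cuA)$, and $J_\zvs(T)$ is again upper (resp.\ lower) block-unitriangular with identity diagonal blocks. The classical block-formula for the determinant over the commutative algebra $\cuA$ then gives $\det_\cuA(J_\zvs(D))=\prod_u\det(D_{uu})$ and $\det_\cuA(J_\zvs(T))=1$, which yields (1) and (2). A self-contained alternative, avoiding $J_\zvs$, is to apply the explicit expression \eqref{exprgdet0}: the vanishing of strictly-off-diagonal blocks forces any non-vanishing monomial to be indexed by a permutation preserving the block partition, which produces the product formula in (1), while in the unitriangular case a cycle-analysis shows that only $\zs=\mathrm{id}$ contributes, giving (2).

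For (3) I would appeal to the uniqueness part of Theorem \ref{thm:Gdet0}. The graded determinant constructed in \cite{COP}, for every $((\Z_2)^p,(-1)^{\la -,-\ra})$-commutative algebra $\cA$, defines a group morphism $\GL^0(\boldsymbol{\zm};\cA)\to(\cA^0)^\ts$, and it satisfies the normalization \textsc{aii} (which is explicitly imposed among its axioms in \cite{COP}). The family is natural in $\cA$: any $\zG$-algebra morphism $f:\cA\to\cB$ transports the UDL-decomposition of an invertible $0$-degree matrix entrywise and commutes with the quasi-determinant formulas used in \cite{COP}, so it intertwines the determinants on the two sides, which is precisely axiom \textsc{ai}. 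Uniqueness in Theorem \ref{thm:Gdet0} then forces the identification with $\Gdet^0_\cA$ on $\GL^0(\boldsymbol{\zm};\cA)$.

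The main obstacle will be the careful verification of naturality in part (3), since it requires unwinding the inductive UDL/quasi-determinant construction of \cite{COP} and checking that it commutes with arbitrary $\zG$-algebra morphisms; once this is done, Theorem \ref{thm:Gdet0} makes the identification immediate. Parts (1) and (2), by contrast, reduce by construction to standard commutative-algebra facts via $J_\zvs$ (or to a direct inspection of the formula \eqref{exprgdet0}) and involve no further subtleties.
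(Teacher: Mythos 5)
Your treatment of parts (1) and (2) is exactly the paper's: feed block-diagonal and block-unitriangular $0$-degree matrices through $J_\zvs$, note from \eqref{Jsentry} (with $x=0$ and the repeated degree inside each block making all factors equal to $1$) that the block structure and the diagonal blocks are preserved, and then invoke the corresponding classical facts for $\det_{\cuA}$. The alternative you sketch via \eqref{exprgdet0} would also work, but the paper does not take it.

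For part (3) you take a genuinely different and, as you yourself concede, incomplete route. You try to deduce the coincidence from the uniqueness clause of Theorem \ref{thm:Gdet0}, which forces you to check that the COP determinant defines a natural transformation over $\glAlg$ satisfying \textsc{ai} and \textsc{aii}; you flag the naturality verification as the main obstacle and do not carry it out, so as written this is a gap. The paper's argument goes the other way and is much shorter: the construction in \cite{COP} comes with its own uniqueness statement -- the COP determinant is the unique group morphism $\GL^0(\boldsymbol{\zm};\cA)\to(\cA^0)^\ts$ satisfying exactly properties (1) and (2). Since you have just shown that $\Gdet^0_\cA$ is a group morphism (by Theorem~\ref{thm:Gdet0}) satisfying (1) and (2), the two must agree, with no naturality check on the COP side required. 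If you want your Theorem-\ref{thm:Gdet0}-uniqueness route to be airtight, you would indeed have to go into the inductive quasi-determinant/UDL definition of \cite{COP} and verify functoriality in $\cA$, which is precisely what the paper's argument is designed to avoid.
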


\begin{proof}
(1) If $D=(D_{uu})_{u=1,\ldots,N}$ is a block-diagonal matrix of degree $0$, then we deduce  from  the definition (\ref{Jsentry}) of $J_{\zvs}\,$, that $J_{\zvs}(D)$ is also block diagonal and $ \big(J_{\zvs}(D)\big)_{uu}=D_{uu}\,$, for all $u=1, \ldots , N\,$. Hence, the formula  $\Gdet^0(X)=\det\lp J_{\zvs}(X)\rp$ implies the result.

(2) Similarly, if $T$ is a block unitriangular  matrix of degree $0\,$, then $J_{\zvs}(T)$ is also one and we get $ \Gdet^0(T)=1\,$.

(3) In \cite{COP}, the authors introduce a notion of graded determinant
over $((\Z_2)^p, \ks{.\,}{.})$-commutative algebras $\cA$. This graded determinant is the unique group morphism from $\GL^0(\boldsymbol{\zm};\cA)$ to $(\cA^0)^{\ts}$ which satisfies the properties (1) and (2) above. Hence, it coincides with $\Gdet^0_\cA$ on $\GL^0(\boldsymbol{\zm};\cA)\,$.
\end{proof}

Assume that the graded algebra $\cA$ is a crossed product, i.e., it admits invertible elements $\mathfrak{t}_{\za}\in \cA^{\za}$ for each degree $\za$.
In this case, Proposition \ref{cor: specialcase} provides an algebra isomorphism, $\Mat^0(\boldsymbol{\zn};\cA) \simeq \Mat(n;\cA^0)\,$, via the change of basis $X\mapsto PXP^{-1}$ with
$$
P= \lp
\begin{array}{ccc}
\f{t}_{\zn_1}& &\\
&\ddots &\\
&&\f{t}_{\zn_n}
\end{array}
\rp\;.
$$
Then, Proposition \ref{Prop:Inv} leads to the graded determinant in this particular basis.
\begin{prop}\label{Prop:MatA0}
If $\cA$ is a crossed product, then, for all $X\in \Mat^0(\boldsymbol{\zn};\cA)\,$, we have
$$
\Gdet^0(X)=\det(PXP^{-1})\;.
$$
\end{prop}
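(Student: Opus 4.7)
The plan is to recognize conjugation by $P$ as a (rectangular) basis transformation that lands in $\Mat^{0}(\boldsymbol{0};\cA) \simeq \Mat(n;\cA^{0})$, and then to invoke Proposition \ref{Prop:Inv}(2) to move $\Gdet^{0}$ across this conjugation, ending with the observation that on entries lying in the center $\cA^{0}$ the graded determinant collapses to the classical one.

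First I would check the degrees. Since $(\mathfrak{t}_{\zn_{i}})_{i}$ are invertible homogeneous elements with $\degr{\mathfrak{t}_{\zn_{i}}}=\zn_{i}$, the diagonal matrix $P$ has entries $P^{i}_{\,\;j}=\zd^{i}_{j}\,\mathfrak{t}_{\zn_{j}} \in \cA^{\zn_{j}}$, which is exactly the condition defining $\Mat^{0}(\boldsymbol{0}\times\boldsymbol{\zn};\cA)$; its inverse $P^{-1}$ is diagonal with entries $\mathfrak{t}_{\zn_{i}}^{-1}$ and lies in $\Mat^{0}(\boldsymbol{\zn}\times\boldsymbol{0};\cA)$. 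For any $X\in\Mat^{0}(\boldsymbol{\zn};\cA)$, a direct calculation gives
\[
(PXP^{-1})^{i}_{\,\;j}=\mathfrak{t}_{\zn_{i}}\, X^{i}_{\,\;j}\,\mathfrak{t}_{\zn_{j}}^{-1},
\]
whose degree equals $\zn_{i}+(\zn_{j}-\zn_{i})-\zn_{j}=0$. Therefore every entry of $PXP^{-1}$ lies in $\cA^{0}$, so $PXP^{-1}\in \Mat^{0}(\boldsymbol{0};\cA)\simeq\Mat(n;\cA^{0})$, reproducing in this special case the isomorphism supplied by Proposition \ref{cor: specialcase}.

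Next, I would apply Proposition \ref{Prop:Inv}(2) with the invertible rectangular transition matrix $Q:=P^{-1}\in\Mat^{0}(\boldsymbol{\zn}\times\boldsymbol{0};\cA)$. That proposition yields
\[
\Gdet^{0}(X)=\Gdet^{0}(Q^{-1}XQ)=\Gdet^{0}(PXP^{-1}),
\]
which reduces the claim to showing that $\Gdet^{0}$, evaluated on a matrix whose entries all lie in $\cA^{0}$, agrees with the classical determinant $\det_{\cA^{0}}$.

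This last step is the only place where one has to look closely. I would invoke the explicit formula \eqref{exprgdet0}: each monomial is a product of entries of $PXP^{-1}$, and since $\cA^{0}\subseteq Z(\cA)$, all such entries commute in $\cA$; the products are therefore order-independent, and $\Gdet^{0}(PXP^{-1})$ reduces to the usual Leibniz expansion of $\det_{\cA^{0}}(PXP^{-1})$. Alternatively one can note that the restriction of $\Gdet^{0}$ to $\Mat(n;\cA^{0})$ satisfies the axioms \textsc{ai}--\textsc{aii} of Theorem \ref{thm:Gdet0} for the purely $0$-graded algebra $\cA^{0}$, and hence coincides with the classical determinant by its usual characterization. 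The only expected mild obstacle is keeping track of the bigraded conventions (rows indexed by $\boldsymbol{0}$, columns by $\boldsymbol{\zn}$, and vice versa) so that the rectangular product $PXP^{-1}$ is well-defined in the graded matrix framework of Section \ref{Sec:grdmtrx}, but this is just bookkeeping.
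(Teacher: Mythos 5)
Your argument is correct and follows the paper's intended route: the text offers no displayed proof, only a pointer to Proposition \ref{cor: specialcase} (conjugation by the diagonal matrix $P$ lands in $\Mat^{0}(\boldsymbol{0};\cA)\simeq\Mat(n;\cA^{0})$) and Proposition \ref{Prop:Inv}(2) (basis-invariance of $\Gdet^{0}$), and you have filled in exactly those two steps, together with the degree bookkeeping for the rectangular matrices $P\in\Mat^{0}(\boldsymbol{0}\ts\boldsymbol{\zn};\cA)$ and $P^{-1}\in\Mat^{0}(\boldsymbol{\zn}\ts\boldsymbol{0};\cA)$. A small streamlining for your last step: from \eqref{Jsentry} and $\zvs(0,0)=1$ one sees directly that $J_{\zvs}$ is the identity on $\Mat^{0}(\boldsymbol{0};\cA)$ and that $\cuA^{0}=\cA^{0}$ as algebras, so $\Gdet^{0}=\det_{\cuA}=\det_{\cA^{0}}$ there, without invoking the explicit formula \eqref{exprgdet0} or the characterization in Theorem \ref{thm:Gdet0}.
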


\section{A family of determinant-like functions on graded matrices} \label{Sec4new2}

As in the previous section, all the considered pairs $(\zG,\zl)$ are such that  $\zG=\zG\evp$. 

Let $\cA$ be a $(\zG,\zl)$-commutative algebra, $n\in\N$, $\boldsymbol{\zn}\in \zG^n$ and $\zvs$ a NS-multiplier. Using the NS-functor $I_{\zvs}$ and the induced isomorphisms $J_{\zvs}$ of $\zG$-graded $\cA^0$-modules, we extend the graded determinant to all graded matrices via the formula $\Gdet_\zvs:=\det\circ J_\zvs$.
Hence, the maps $\Gdet_{\zvs}$ are natural transformations defined as the following vertical composition

\[
\begin{tikzpicture}
\matrix(m)[matrix of math nodes, column sep=25em]
{
\glAlg  &\cat{Set}\\
};
\draw[->,shorten >=10pt,shorten <=10pt](m-1-1) to[bend left=60] node[pos=0.51, above] (U') {$\scriptstyle\Mat\big(\boldsymbol{\zn};   -  \big) $} (m-1-2.north);
\draw[->,shorten >=5pt,shorten <=5pt](m-1-1.north east) to[bend left=15] node[label=above:$\scriptstyle\Mat\big(\boldsymbol{\zn}; \Ivs(  -  ) \big) $] (U) {} (m-1-2.north west);
\draw[->,shorten >=5pt,shorten <=5pt] (m-1-1.south east) to[bend right=15,name=D] node[label=below:$\qquad\scriptstyle{ \Mat(1; \Ivs(-))=\Ivs(  -  )}$] (V) {} (m-1-2.south west);
\draw[->,shorten >=10pt,shorten <=10pt] (m-1-1) to[bend right=50,name=D'] node[pos=0.51,below] (V') {$\qquad\scriptstyle{ \Mat(1;-) ={\rm forget}} $} (m-1-2.south);
\draw[double,double equal sign distance,-implies, shorten >=18pt,shorten <=5pt]
  (U') -- node[label=above right:$J_{\zvs}$] {} (U);
\draw[double,double equal sign distance,-implies,shorten >=5pt,shorten <=5pt]
  (U) -- node[label=right:$\det_{\Ivs(  -  )}$] {} (V);
  \draw[double,double equal sign distance, shorten >=5pt,shorten <=14pt]
  (V) -- node[label=right:] {} (V'); 
\draw[double,double equal sign distance,-implies, shorten >=10pt,shorten <=10pt]
  (U'.south west) to[bend right=30] node[label=left:$\Gdet_{\zvs}$] {} (V'.north west);
\end{tikzpicture}
\]
As $\Gdet^0$, the natural transformations $\Gdet_{\zvs}$ can be equivalently defined on endomorphisms, via the formula $\Gdet_{\zvs}:=\det\circ\zh_\zvs\,$, with $\zh_\zvs$ defined in \eqref{eta}. In the case of a free graded module admitting bases of degrees $\boldsymbol{\zn},\boldsymbol{\zm}\in\zG^n$, this yields the following equality
$$
\Gdet_{\zvs}(P^{-1}XP)=\Gdet_{\zvs}(X)\;,
$$
for all $X\in\Mat(\boldsymbol{\zn};\cA)$ and all invertible $P\in\Mat^0(\boldsymbol{\zn}\ts\boldsymbol{\zm};\cA)\,$.

\subsection{Proof of Theorem \ref{Thm:E}}
The proof is in three steps.

\textbf{Step 1.}
We prove that $\Gdet_\zvs$ satisfies the properties  \ref{GdetGdet0}--\ref{Axiom3}, using
its defining formula $\Gdet_{\zvs}=\det\circ J_{\zvs}$\,.

Property \ref{GdetGdet0} is obvious.

Property \ref{Axiom1b} follows from Proposition \ref{Cor:B}.

Assume the matrices $X,Y,Z$ satisfy \eqref{XYZ}. Using their homogeneous decomposition and the definition of $J_{\zvs}$ (see \eqref{Jsentry}), we see that the rows of matrices $J_{\zvs}(X),J_{\zvs}(Y),J_{\zvs}(Z)$ satisfy again the relation \eqref{XYZ}. Property \ref{Axiom2} is then deduced from multi-additivity of the determinant.

Property \ref{Axiom3} follows from the definitions of $J_{\zvs}$ and of ``$\star\,$'' (see \eqref{star-cdot}), namely
\[\begin{split}
\Gdet_{\zvs}\lp \begin{array}{c|c} \;\;D\;\;& \\ \hline & \zl(\degr{c},\zn_n)\,c \end{array}\rp
= \det\lp \begin{array}{c|c} \;\;J_{\zvs}(D)\;\;& \\ \hline & c \end{array} \rp
&= c\, \star \,\det\lp J_{\zvs}(D) \rp \\
&= \zvs\Big(\degr{c}, \op{deg}\lp\Gdet_{\zvs}(D)\rp\Big)\, c\,\cdot\, \Gdet_{\zvs}(D)\;.
\end{split}\]

\medskip

\textbf{Step 2. }
Let us consider a map $\f{s}:\zG \ts \zG \to \K^{\ts}$. Assume there exists a natural transformation $\zD_{\f{s}}$ as in Theorem \ref{Thm:E}.
We prove that $\f{s}\in\f{S}(\zl)$, or equivalently, that $\f{s}$ satisfies the two following equations
\bea
& \zl(x,y)\f{s}(x,y)\f{s}(y,x)^{-1} = 1\;,&\label{1}\\
& \f{s}(x+y,z)\f{s}(x,y)= \f{s}(x,y+z)\f{s}(y,z) \;,&\label{2}
\eea
for all $x,y,z\in \zG$. To that end, we work over the algebra $\cB= \cA\ots \K[S,S^{-1}]_{\zl}$ (see \eqref{Blambda}), with $S$ a finite generating set of $\zG$, and use the $0$-degree matrix permutations $P(\zs)\in \GL^0(\boldsymbol{\zn};\cB)$, defined in Lemma \ref{permutationMatrx}.

We first prove \eqref{1}.
Let $\boldsymbol{\zn}\in\zG^2$ and $a,b\in\cB$ be two homogeneous invertible elements.
Then, using the transposition $\zs=(1\,2)$, we get
$$
\lp \begin{array}{cc} \zl(\degr{a},\zn_1)\,a & \\ & \zl(\degr{b},\zn_2)\,b\end{array}\rp
=
P(\zs)
 \lp\begin{array}{cc} \zl(\degr{b},\zn_1)\,b & \\ & \zl(\degr{a},\zn_2)\,a\end{array}\rp
 P(\zs)\;.
$$
From Properties  \ref{GdetGdet0}, \ref{Axiom1b} and Lemma \ref{permutationMatrx}, we deduce that
$$
\zD_{\f{s}}\lp \begin{array}{cc} \zl(\degr{a},\zn_1)\,a & \\ & \zl(\degr{b},\zn_2)\,b\end{array}\rp
=
\zD_{\f{s}}\lp \begin{array}{cc} \zl(\degr{b},\zn_1)\,b & \\ & \zl(\degr{a},\zn_2)\,a\end{array}\rp
$$
Applying \ref{Axiom3} to the left and right hand side of the above equation, we obtain
$$ \f{s}(\degr{b},\degr{a}) ba =\f{s}(\degr{a},\degr{b})\zl(\degr{a},\degr{b})\, ba \;,$$
by $\zl$-commutativity.
Since $a$ and $b$ are invertible, this implies \eqref{1}.

We now prove \eqref{2}. Let $\boldsymbol{\zn}\in\zG^3$ and $D\in\Mat(\boldsymbol{\zn};\cB)$ be a diagonal matrix with homogeneous invertible entries $D^{i}_{\,\;i}=\zl(\degr{a}_i,\zn_i)\, a_i \in\cB^{\degr{a}_i}$, for $i=1,2,3$.
Using the permutation $\zs=(123)$, we define
 $$
 D':=P(\zs^{-1})DP(\zs)
    = \lp\begin{array}{ccc}\zl(\degr{a}_2,\zn_1)\,a_2 & & \\ & \zl(\degr{a}_3,\zn_2)\,a_3 & \\ & & \zl(\degr{a}_1,\zn_3)\,a_1 \end{array}\rp
 \;.$$
Reasoning as above, we obtain that
$
\zD_{\f{s}}(D')
= \zD_{\f{s}}(D)
$.
Furthermore, by means of \ref{Axiom3}, we can compute explicitly both $\zD_{\f{s}}(D')$ and $\zD_{\f{s}}(D)$.
Using moreover the equality \eqref{1}, we get
\[\begin{split}
\zD_{\f{s}}(D')&= \f{s}(\degr{a}_1,\degr{a}_3+\degr{a}_2)\f{s}(\degr{a}_3,\degr{a}_2)\, a_1a_3a_2 \\
&=  \f{s}(\degr{a}_1,\degr{a}_3+\degr{a}_2)\f{s}(\degr{a}_3,\degr{a}_2)\zl(\degr{a}_1,\degr{a}_3+\degr{a}_2)\, a_3a_2a_1 \\
&= \f{s}(\degr{a}_3+\degr{a}_2,\degr{a}_1)\f{s}(\degr{a}_3,\degr{a}_2)\, a_3a_2a_1 \,,
\end{split}\]
and similarly,
$$
\zD_{\f{s}}(D)= \f{s}(\degr{a}_3,\degr{a}_2+\degr{a}_1)\f{s}(\degr{a}_2,\degr{a}_1)\, a_3a_2a_1\;.
$$
Since $a_1,a_2,a_3$ are invertible, we finally obtain
\eqref{2}.
In conclusion, we have $\f{s}\in\f{S}(\zl)\,$.

\medskip

\textbf{Step 3.}
Let us consider a map $\f{s}\in\f{S}(\zl)\,$. Assume that $\zD_{\f{s}}$ is as in Theorem \ref{Thm:E}.
We prove that $\zD_{\f{s}}$ is equal to $\Gdet_{\f{s}}\,$.
For that, we show that $\zD_{\f{s}}$ is uniquely determined by Properties~\ref{GdetGdet0}~-\ref{Axiom3}, first over algebras
 of the form $\cB=\cA\ots_{\K}\K[S,S^{-1}]_{\zl}\,$, with $S$ a finite generating set of $\zG$, and then over all $(\zG,\zl)$-commutative algebras.

{a)}
        Let $n\in\N^*$, $\boldsymbol{\zn}\in \zG^n$ and $X\in\Mat(\boldsymbol{\zn};\cB)\,$. The rows of $X$ can be decomposed into homogeneous parts,
$$
X
= \left[ \begin{array}{c} \mathtt{x}^1 \\ \vdots \\ \mathtt{x}^n \end{array}\right]
= \left[ \begin{array}{c}
                                \sum_{\za_1\in \zG} \mathtt{x}^{1,\za_1}
                                \\ \vdots \\
                                \sum_{\za_n\in \zG} \mathtt{x}^{n,\za_n}
    \end{array} \right]\;.
$$
Applying \ref{Axiom2} inductively to such a decomposition,
we obtain that
\beas
  \zD_{\f{s}}(X)=\sum_{\bza \in \zG^n} \zD_{\f{s}}(X^{\bza}) \;,
& \mbox{ where } &
 X^{\bza}:= \left[\begin{array}{c} \mathtt{x}^{1,\za_1} \\ \vdots \\ \mathtt{x}^{n,\za_n} \end{array} \right]\;.
\eeas
Hence, $X^{\bza}$ is the matrix whose $k$-th row is the $\za_k$-degree component of the $k$-th row of $X$.
Let us now consider an arbitrary map $\zx:\, \{1,2, \ldots , n\} \to \{1,2, \ldots , n\}$ and denote by $X^{\bza}(\zx)$ the matrix whose entries are given by
$$ \lp X^{\bza}(\zx)\rp^i_{\,\; j} = (X^{\bza})^i_{\,\;j}\, \zd_{i,\zx(j)}\;, $$
where $\zd$ is the Kronecker delta.
Thanks to \ref{Axiom2}, we then have for any multi-index $\bza\in \zG^n$,
$$
\zD_{\f{s}}(X^{\bza})= \sum_{\zx} \zD_{\f{s}}(X^{\bza}(\zx))\;.
$$
If the map $\zx$ is not bijective, the matrix $X^{\bza}(\zx)$ presents a whole row of zeros,
and then $\zD_{\f{s}}(X^{\bza}(\zx))=0$  by Property \ref{Axiom2}.
As a consequence, we get
\be\label{decompDX}
\zD_{\f{s}}(X)= \sum_{\bza\in \zG^n}\sum_{\zs\in S_n} \zD_{\f{s}}(X^{\bza}(\zs))\;.
\ee
Let $P(\zs)\in\GL^0(\boldsymbol{\zn};\cB)$ be the permutation matrix associated to $\zs\in\op{S}_n$, as introduced in Lemma \ref{permutationMatrx}.
By definition of $X^{\bza}(\zs)$ we have
$$
X^{\za}(\zs) = P(\zs) \cdot \cD(\za,\zs)\;,
$$
where $\cD(\za,\zs)$ is a diagonal matrix with homogeneous entries. 
By Lemma \ref{permutationMatrx} and Properties \ref{GdetGdet0} and \ref{Axiom1b}, we end up with
\be\label{decompthm}
\zD_{\f{s}}(X) = \sum_{\bza\in \zG^n} \sum_{\zs\in S_n} \op{sgn}(\zs) \, \zD_{\f{s}}(\cD(\za,\zs))\;.
\ee
 By induction, Property \ref{Axiom3} fixes the values of $\zD_{\f{s}}(\cD(\za,\zs))$, and the uniqueness of the above map $\zD_{\f{s}}$ follows.

{b)} Let $\cA$ be an arbitrary $(\zG,\zl)$-commutative algebra, $S$ a finite generating set of $\zG$, and  $\zi:\, \cA \to \cB=\cA\ots_{\K}\K[S,S^{-1}]_{\zl}\,$ the canonical embedding.
Since  $\zD_{\f{s}}$ is a natural transformation,  we have
 $$ \zi(\zD_{\f{s}}(X)) = \zD_{\f{s}}(\GL(\zi)(X)) \;,$$
for all $X\in \Mat(\boldsymbol{\zn};\cA)\,$.
The right-hand side of the equation is fixed by point a) above and $\zi$ is injective, hence
 the map $\zD_{\f{s}}:\,~\Mat(\boldsymbol{\zn};\cA)\to \cA$ is unique and $\zD_{\f{s}}=\Gdet_{\f{s}}\,$.
This concludes the proof of Theorem \ref{Thm:E}.

\subsection{$\Gdet_{\zvs}$ on homogeneous graded matrices}

The restriction of $\Gdet_{\zvs}$ to homogeneous matrices has additional properties, which turns it into a proper determinant.

First of all, by construction, $\Gdet_{\zvs}$ preserves homogeneity. Indeed, for any $\zg\in\zG$ and any $\boldsymbol{\zn}\in\zG^{n}$, we have
\bemaps
\Gdet_{\zvs}(\Mat^{\zg}(\boldsymbol{\zn};\cA)) \subset \cA^{n\zg}\;.
\eemaps
To go further, we need the following preliminary results.

\begin{prop}\label{GdetsHomogeneous}
Let $\zvs \in \f{S}(\zl)$ and $\boldsymbol{\zn}\in\zG^n$.
\begin{enumerate}
    \item \label{weakmultipli}
For any couple of homogeneous matrices $X,Y \in \Mat(\boldsymbol{\zn};\cA)\,$, of degrees $x$ and $y$ respectively, we have
$$ \Gdet_{\zvs}(X Y)= \zvs(x,y)^{n(n-1)}\, \Gdet_{\zvs}(X) \cdot \Gdet_{\zvs}(Y) \;.$$
    \item \label{inverse}
For any invertible homogeneous matrix $X\in\GL^x(\boldsymbol{\zn};\cA)\,$, $\Gdet_{\zvs}(X)$ is invertible in $\cA$,
$$
\Gdet_{\zvs}(X^{-1})= \zvs(x,x)^{n(n-1)}\, \lp\Gdet_{\zvs}(X)\rp^{-1}
\;.$$
    \item \label{diagonalHomogeneous}
    For any homogeneous element $a\in\cA$ and homogeneous matrix $X\in\Mat^{x}(\boldsymbol{\zn};\cA)\,$, we have
     $$
  \Gdet_{\zvs}(a\cdot X)= \zvs(\degr{a},\degr{a})^{\frac{n(n-1)}{2}}\zvs(\degr{a},x)^{n(n-1)}\, a^n \cdot \Gdet_{\zvs}(X)\;.
     $$
    In particular, if $X=\I$ is the identity matrix, this reduces to
$ \Gdet_{\zvs}(a\cdot \I )= \zvs(\degr{a},\degr{a})^{\frac{n(n-1)}{2}}\, a^n\;.$
\end{enumerate}
\end{prop}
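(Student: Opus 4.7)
The plan is to reduce everything to the defining formula $\Gdet_{\zvs} = \det_{\cuA} \circ J_{\zvs}$ together with Proposition~\ref{Cor:B}, keeping careful track of the two algebra products $\cdot$ on $\cA$ and $\star$ on $\cuA$. The decisive preliminary observation is that in the purely even case $\zG = \zG\evp$ the morphism $\zvf_{\zl}$ is identically zero, whence $\zl^{\zvs} \equiv 1$ by the defining property of $\zvs \in \fS(\zl)$. Consequently $\cuA = \Ivs(\cA)$ is classically commutative, so the determinant $\det_{\cuA}$ is ordinary-multiplicative with respect to $\star$.

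For part (1), I would start from $J_{\zvs}(XY) = \zvs(x,y)^{-1}\, J_{\zvs}(X)\, J_{\zvs}(Y)$ given by Proposition~\ref{Cor:B}(1), and apply $\det_{\cuA}$. Pulling the scalar $\zvs(x,y)^{-1}$ out of the $n\times n$ determinant contributes a factor $\zvs(x,y)^{-n}$, and multiplicativity of $\det_{\cuA}$ gives $\Gdet_{\zvs}(X) \star \Gdet_{\zvs}(Y)$ on the right. The key numerical step is to observe that $\Gdet_{\zvs}(X)$ is itself homogeneous of degree $nx$: every monomial of the determinant is a $\star$-product of entries whose degrees sum to $\sum_i(x-\zn_i+\zn_{\sigma(i)}) = nx$. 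Converting the outer $\star$ back into the $\cdot$-product of $\cA$ therefore costs $\zvs(nx,ny) = \zvs(x,y)^{n^2}$ by biadditivity of $\zvs$, and when combined with the previous $\zvs(x,y)^{-n}$ this yields the announced exponent $n^2 - n = n(n-1)$.

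Part (2) is then immediate: apply (1) to $X X^{-1} = \I$, use $\Gdet_{\zvs}(\I) = 1$ together with the biadditivity identity $\zvs(x,-x) = \zvs(x,x)^{-1}$, and note that invertibility of $\Gdet_{\zvs}(X)$ in $\cA$ follows from invertibility of $J_{\zvs}(X)$ in $\Mat(\boldsymbol{\zn};\cuA)$ (Proposition~\ref{Cor:B}(3)) together with the fact that invertibility of a homogeneous element is the same in $(\cuA,\star)$ and $(\cA,\cdot)$. For part (3), I would write $a \cdot X = (aI)\, X$, where, by the $\cA$-module structure \eqref{A-Mod Matrix}, $aI$ denotes the diagonal matrix with $(i,i)$-entry $\zl(\degr{a},\zn_i)\, a$, homogeneous of degree $\degr{a}$. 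Part (1) applied to $aI$ and $X$ reduces the claim to computing $\Gdet_{\zvs}(aI)$. From \eqref{Jsentry} and the identity $\zl(\degr{a},\zn_i) = \zvs(\zn_i,\degr{a})\zvs(\degr{a},\zn_i)^{-1}$ --- a rewriting of $\zl^{\zvs} = 1$ --- the diagonal entries of $J_{\zvs}(aI)$ collapse to $a$, so $J_{\zvs}(aI) = a\,\I_n$ and $\det_{\cuA}(a\,\I_n) = a^{\star n}$. A short induction using biadditivity then gives $a^{\star n} = \zvs(\degr{a},\degr{a})^{n(n-1)/2}\, a^n$, and combining with part (1) yields the announced formula; the special case $X = \I$ is the case $x=0$.

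The main source of potential confusion throughout is the interplay between the two products $\cdot$ and $\star$, together with the fact that $J_{\zvs}$ is $\cA^0$-linear but not $\cA$-linear. The exponents $n(n-1)$ and $n(n-1)/2$ appear as pure combinatorial artefacts of extracting $n$ homogeneous factors from a $\star$-product: quadratic growth from the $\star$-product of $n$ factors, balanced against the linear $n$ contributed by pulling a scalar out of an $n\times n$ determinant.
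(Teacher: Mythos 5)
Your proof is correct and takes essentially the same route as the paper: all three parts are reduced to the defining identity $\Gdet_{\zvs}=\det_{\cuA}\circ J_{\zvs}$ together with Proposition~\ref{Cor:B}, and part (3) is handled by the explicit computation of $\Gdet_{\zvs}(a\cdot\I)$ via \eqref{Jsentry} and the $\star$-power $a^{\star n}=\zvs(\degr a,\degr a)^{n(n-1)/2}a^n$. You supply more of the bookkeeping than the paper's very terse version -- notably the observation that $\Gdet_{\zvs}(X)$ has degree $nx$, which is what reconciles the $\zvs(x,y)^{-n}$ from the scalar in $J_{\zvs}(XY)$ with the $\zvs(x,y)^{n^2}$ from converting the outer $\star$ into $\cdot$ -- but this is exactly the argument the paper intends.
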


\begin{proof}
    The first result follows from the formula $\Gdet_\zvs=\det\circ J_{\zvs}$ and from the second point of Proposition~\ref{Cor:B}.

  The second result is a consequence of the first one.

    The third result relies on the first point and on the computation of $\Gdet_{\zvs}(a\cdot\I)\,$. Using displays \eqref{A-Mod Matrix} and \eqref{Jsentry} we get
    \[ \begin{split}
    \Gdet_{\zvs}(a\cdot \I)
    &= \det\lp\begin{array}{ccc} a &&\\ &\ddots&\\&& a \end{array}\rp
    =  a\,\star\, a \,\star\, \ldots \,\star\, a\\
    &=\lp \prod_{i<j}\zvs(\degr{a},\degr{a})\rp\, a\cdot a \cdot \ldots \cdot a
    = \zvs(\degr{a},\degr{a})^{\frac{n(n-1)}{2}}\, a^n\;.
    \end{split}
    \]
\end{proof}
The next two propositions show that $\Gdet_\zvs$ satisfies the two fundamental properties of a determinant on homogeneous matrices.
\begin{prop}
Let $\zvs \in \f{S}(\zl)$, $\boldsymbol{\zn}\in\zG^n$ and $X\in \Mat^{x}(\boldsymbol{\zn};\cA)\,$.
The homogeneous matrix $X$ is invertible if and only if $\Gdet_{\zvs}(X)$ is invertible in $\cA\,$.
\end{prop}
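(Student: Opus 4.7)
The plan is to reduce to the commutative setting via the identity $\Gdet_{\zvs}=\det_{\cuA}\circ J_{\zvs}$, where $\cuA=I_{\zvs}(\cA)$. In the purely even case $\zG=\zG\evp$, this $\cuA$ is a $\zG$-graded \emph{commutative} algebra, for which the classical fact ``a matrix is invertible iff its determinant is invertible'' applies. The two implications will rely on two different pieces of machinery already available: Proposition~\ref{GdetsHomogeneous} for one direction, and Proposition~\ref{Cor:B} for the other.

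The forward implication is essentially free: if $X\in\GL^{x}(\boldsymbol{\zn};\cA)$, then Proposition~\ref{GdetsHomogeneous}(2) gives us the explicit identity $\Gdet_{\zvs}(X)\cdot\Gdet_{\zvs}(X^{-1})=\zvs(x,x)^{n(n-1)}\in\K^{\ts}$, which exhibits $\Gdet_{\zvs}(X)$ as invertible in $\cA$. For the converse, suppose $\Gdet_{\zvs}(X)=\det_{\cuA}(J_{\zvs}(X))$ is invertible in $\cA$. Since $\Gdet_{\zvs}(X)$ is homogeneous of degree $nx$ and the homogeneous invertible elements of $\cA$ and of $\cuA$ coincide (same underlying graded set, products differing only by a scalar factor, as noted in the proof of Proposition~\ref{prop:rank}), it is also invertible in $\cuA$. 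Commutativity of $\cuA$ then gives an inverse $Y\in\Mat(\boldsymbol{\zn};\cuA)$ of $J_{\zvs}(X)$, necessarily homogeneous of degree $-x$.

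Now I would lift $Y$ back to $\cA$. Because $J_{\zvs}$ is an isomorphism of $\zG$-graded $\cA^{0}$-modules $\Mat(\boldsymbol{\zn};\cA)\to\Mat(\boldsymbol{\zn};\cuA)$, there is a unique $Z\in\Mat^{-x}(\boldsymbol{\zn};\cA)$ with $J_{\zvs}(Z)=Y$. Applying Proposition~\ref{Cor:B}(1) to the homogeneous pair $(X,Z)$ of degrees $x,-x$, one computes
\[
J_{\zvs}(XZ)=\zvs(x,-x)^{-1}J_{\zvs}(X)\,J_{\zvs}(Z)=\zvs(x,-x)^{-1}\I=J_{\zvs}\bigl(\zvs(x,-x)^{-1}\I\bigr),
\]
where the last equality uses that $J_{\zvs}$ is the identity on degree-$0$ scalar matrices (directly from formula \eqref{Jsentry}). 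Injectivity of $J_{\zvs}$ yields $XZ=\zvs(x,-x)^{-1}\I$, and symmetrically $ZX=\zvs(x,-x)^{-1}\I$. Hence $X$ is invertible with inverse $\zvs(x,-x)Z$.

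The only delicate point is the lifting step: $J_{\zvs}$ is merely an $\cA^{0}$-linear isomorphism, not an algebra map, so one cannot simply ``transport'' the inverse from $\cuA$ to $\cA$. The resolution is precisely the homogeneous near-multiplicativity of $J_{\zvs}$ recorded in Proposition~\ref{Cor:B}(1), which costs only a scalar twist $\zvs(x,-x)^{\pm 1}\in\K^{\ts}$ that is harmless for the invertibility question. No further calculation is needed beyond the identities already set up in Section~3.
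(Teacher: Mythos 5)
Your proof is correct and takes essentially the same route as the paper's: both reduce to the classical fact that a matrix over the commutative algebra $\cuA=\Ivs(\cA)$ is invertible if and only if its determinant is, via the identity $\Gdet_{\zvs}=\det_{\cuA}\circ J_{\zvs}$ and the invertibility equivalence coming from Proposition~\ref{Cor:B}. The paper compresses this into two sentences, while you unpack the lifting of the inverse back through $J_{\zvs}^{-1}$ explicitly; this extra detail is sound, though it duplicates what Proposition~\ref{Cor:B}(3), applied both to $J_{\zvs}$ and to its inverse $J_{\zd}$, already provides.
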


\begin{proof}
By  Proposition \ref{Cor:B}, a homogeneous matrix $X$ is invertible if and only if $J_{\zvs}(X)$ is invertible. Since $\Gdet_{\zvs}(X)=\det(J_{\zvs}(X))$, the proposition follows from the analogous result over commutative algebras.
\end{proof}

We introduce the set of homogeneous invertible matrices
$$
\hGL(\boldsymbol{\zn};\cA):=\bigcup_{\zg\in\zG}\GL^{\zg}(\boldsymbol{\zn};\cA)\;.
$$
Since multiplication of two homogeneous matrices gives a homogeneous matrix, $\hGL(\boldsymbol{\zn};\cA)$ is a group.
In particular, $\cup_{\zg\in\zG}\lp \cA^{\ts}\cap \cA^{\zg}\rp$ is a group.
Recall that $\K$ is the ground field of $\cA$.
\begin{prop}\label{Prop:GdetHomog}
For all $\zvs\in\f{S}(\zl)$, there exists a finitely generated subgroup $\mathbb{U} \leq \K^{\ts}$ such that
\bemaps
\Gdet_{\zvs}: &\hGL(\boldsymbol{\zn};\cA)& \to & \EnsQuot{ \bigcup\limits_{\zg\in\zG}\lp \cA^{\ts}\cap \cA^{\zg}\rp}{\;\mathbb{U}}
\eemaps
is a group morphism for all $\zn\in\bigcup\limits_{n\in\N^*}\zG^n$. Moreover, $\mathbb{U}$ can be chosen as the subgroup of $\K^{\ts}$ generated by $\{ \zvs(x,y)^2\,|\, x,y\in\zG \}$.
\end{prop}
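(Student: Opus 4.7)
The plan is to read off the statement directly from the weak multiplicativity established in Proposition~\ref{GdetsHomogeneous}(1): for homogeneous matrices $X,Y\in\hGL(\boldsymbol{\zn};\cA)$ of degrees $x$ and $y$ respectively, one has
\[
 \Gdet_{\zvs}(XY)=\zvs(x,y)^{n(n-1)}\,\Gdet_{\zvs}(X)\cdot\Gdet_{\zvs}(Y).
\]
The crucial observation is that $n(n-1)$ is always even, so $\zvs(x,y)^{n(n-1)}=\bigl(\zvs(x,y)^2\bigr)^{n(n-1)/2}$ lies in the subgroup $\mathbb{U}\leq\K^{\ts}$ generated by $\{\zvs(x,y)^2\mid x,y\in\zG\}$, independently of $n$. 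Passing to the quotient by $\mathbb{U}$ therefore kills the defect term and turns $\Gdet_{\zvs}$ into a homomorphism on $\hGL(\boldsymbol{\zn};\cA)$ for every $\boldsymbol{\zn}\in\bigcup_{n\in\N^{\ast}}\zG^{n}$ simultaneously.

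Next, I would check that the target space makes sense. Since $\Gdet_{\zvs}$ sends $\Mat^{\zg}(\boldsymbol{\zn};\cA)$ into $\cA^{n\zg}$ (a remark already made at the start of the subsection), and since Proposition~\ref{GdetsHomogeneous}(2) implies $\Gdet_{\zvs}(X)\in\cA^{\ts}$ whenever $X\in\hGL(\boldsymbol{\zn};\cA)$, the image of $\Gdet_{\zvs}$ indeed lies in $\bigcup_{\zg\in\zG}(\cA^{\ts}\cap\cA^{\zg})$. Multiplication of homogeneous invertible matrices (resp.\ elements) is internal to $\hGL(\boldsymbol{\zn};\cA)$ (resp.\ to $\bigcup_{\zg}\cA^{\ts}\cap\cA^{\zg}$), so both sides carry honest group structures and the quotient by $\mathbb{U}$ (viewed as a central subgroup via the obvious embedding $\K^{\ts}\hookrightarrow\cA^{\ts}$) is well-defined.

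Finally, I would verify finite generation of $\mathbb{U}$. Because $\zG$ is a finitely generated abelian group, fix generators $\zg_{1},\ldots,\zg_{k}$. Since $\zvs\in\fS(\zl)$ is biadditive, one has for $x=\sum m_{i}\zg_{i}$ and $y=\sum n_{j}\zg_{j}$,
\[
 \zvs(x,y)^{2}\;=\;\prod_{i,j}\bigl(\zvs(\zg_{i},\zg_{j})^{2}\bigr)^{m_{i}n_{j}}.
\]
Hence $\mathbb{U}$ is already generated, as a subgroup of $\K^{\ts}$, by the finite set $\{\zvs(\zg_{i},\zg_{j})^{2}\mid 1\leq i,j\leq k\}$. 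Putting the three steps together yields the claim; the only ``obstacle'' worth noting is the bookkeeping that $n(n-1)$ is even for every $n$, which is exactly what allows a single $\mathbb{U}$ independent of $n$ to do the job. Biadditivity of NS-multipliers (recalled just before Theorem~\ref{thm:NSalg}) is what makes the finite generation step immediate.
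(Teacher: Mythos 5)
Your argument is correct and follows essentially the same route as the paper: define $\mathbb{U}$ via squares of $\zvs$ on a finite generating set of $\zG$, use biadditivity to capture all $\zvs(x,y)^2$, observe that $n(n-1)$ is always even so the defect $\zvs(x,y)^{n(n-1)}$ lies in $\mathbb{U}$, and conclude from the weak multiplicativity in Proposition~\ref{GdetsHomogeneous}(1). The only difference is that you spell out the sanity check on the target group, which the paper leaves implicit.
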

\begin{proof}
Let $\mathbb{U}$ be the subgroup of $\K^{\ts}$ generated by  $\{\zvs(x,y)^2\,|\, x,y\in S \}$ in $\K^{\ts}$, with $S$ a finite generating set of $\zG$. The group $\mathbb{U}$ is finitely generated and contains $\{\zvs(x,y)^2\,|\, x,y\in\zG \}$, since $\zvs$ is biadditive. Hence we have $\zvs(x,y)^{n(n-1)}\in\mathbb{U}$, for all $x,y\in\zG$, $n\in\N^*$, and the result follows from  point (1) in Proposition \ref{GdetsHomogeneous}.
\end{proof}
If $\zG$ is a finite group,  $\mathbb{U}$ can be chosen as a finite group of roots of unity in  $\K^{\ts}$.

If $\zvs$ takes values in $\{\pm1\}$, the preceding results simplify.
In particular, one can take $\mathbb{U}=\{1\}$ and then obtain group morphisms
\bemaps
\Gdet_{\zvs}: &\hGL(\boldsymbol{\zn};\cA)& \to & \bigcup\limits_{\zg\in\zG}\lp \cA^{\ts}\cap \cA^{\zg}\rp\;.
\eemaps
Besides, the statement (3) in Proposition \ref{GdetsHomogeneous} reduces then to
\be\label{Gdetar}
 \Gdet_\zvs(a\cdot X)= \zvs(\degr{a},\degr{a})^{\frac{n(n-1)}{2}}\, a^n \cdot \Gdet_\zvs(X)\;.
 \ee

If $\cA$ admits homogeneous elements of each degree, then $X\in\Mat^{x}(\boldsymbol{\zn};\cA)$ can be written as $X=a\cdot X_0\,$, with $a\in \cA^{x}$ and $X_0\in\Mat^{0}(\boldsymbol{\zn};\cA)\,$.
The above equation can then be used as an ansatz to generalize $\Gdet^0$ to homogeneous matrices. If $n \equiv 0, 1 \!\mod{4}\,$, this ansatz simplifies into the naive one $\Gdet(a\cdot X_0)= a^n \, \Gdet^0(X_0)$, used in \cite{COP}, which does not depend on~$\zvs$. But, if $n \equiv 2,3 \!\mod{4}$, such a naive ansatz is not coherent with multiplication by scalars (see \cite{COP}) and one should use \eqref{Gdetar}.

\begin{rema}
Assume $\cA$ is a graded division ring and a graded commutative algebra, and write $\cA^\times_h:=\bigcup\limits_{\zg\in\zG}\lp \cA^{\ts}\cap \cA^{\zg}\rp$. The construction of a Dieudonn\'e determinant in \cite{HWa12} specifies then as a group morphism $$\hGL(\boldsymbol{\zn};\cA) \to   \cA^\times_h/[\cA^\times_h,\cA^\times_h],$$
where $[\cA^\times_h,\cA^\times_h]$ is generated by the products $aba^{-1}b^{-1}=\zl(\tilde{a},\tilde{b})$ of homogeneous elements. By Proposition \ref{Prop:GdetHomog}, such a morphism is provided by any graded determinant $\Gdet_\zvs$.
\end{rema}

\subsection{$\Gdet_{\zvs}$ on quaternionic matrices: the good, the bad and the ugly}

Recall that the quaternion algebra is the real algebra $\qH=\{ x+\qi y+\qj z+\qk t\,|\, x,y,z,t\in \R \}$ with multiplication law given in table \eqref{quaternions}. According to Example \ref{quaternion}, $\qH$ is a purely even $(\zG,\zl)$-commutative algebra, with $\zG=\left\{0,\degr{\qi},\degr{\qj},\degr{\qk}\right\}\leq (\Z_2)^3$ and $\zl=(-1)^{\la .,.\ra}$, the grading being given by
$$
\degr{\qi}:=(0,1,1)\,, \quad \degr{\qj}:=(1,0,1)\,, \quad\mbox{ and } \quad \degr{\qk}:=(1,1,0)\;.
$$
The graded $\qH$-modules structures on $\qH^n$ are in bijection with subspaces $V\subset \qH^n$ of real dimension $n$, the grading being
$$
\qH^n= V\oplus \qi V\oplus\qj V\oplus\qk V\;,
$$
where, e.g., $(\qH^n)^{\degr{\qi}}=\qi V$.

Let $X\in\Mat(n;\qH)$ be a quaternionic matrix, representing an endomorphism of $\qH^n$ in the basis $(e_i)$.
For every $\boldsymbol{\zn}\in\zG^n$, there exists a grading $V\subset\qH^n$ such that the basis vectors $e_i$ are homogeneous and $(e_i)$ has degree $\boldsymbol{\zn}$. Such a choice of grading turns $X$ into  a graded matrix, $X\in\Mat(\boldsymbol{\zn};\qH)$. This allows us to apply our determinant-like functions to quaternionic matrices.

\subsubsection{The good: homogeneous matrices}
The Dieudonn\'e determinant over $\qH$ is the unique group morphism
\bemaps
\mathrm{Ddet}: &\GL(n;\qH) & \to & \EnsQuot{\mathbb{R}^\ts}{\;\{\pm 1\}}\;,
\eemaps
which satisfies
$$ \mathrm{Ddet}\lp \begin{array}{cccc} 1&&&\\&\ddots&&\\&&1&\\&&&a \end{array}\rp = a\cdot\{\pm 1\} \;,$$
for all $a\in\mathbb{R}^\ts$ (see \cite{Die43}). In particular, this determinant defines a group morphism on $\GL(\boldsymbol{\zn};\qH)$ which is independent of $\boldsymbol{\zn}\in\zG^n$. We use the following quotient map
\bemaps
\pi: &\mathbb{R}^\ts\oplus \qi \mathbb{R}^\ts\oplus \qj\mathbb{R}^\ts\oplus\qk\mathbb{R}^\ts & \to &\EnsQuot{\left(\mathbb{R}^\ts\oplus \qi \mathbb{R}^\ts\oplus \qj\mathbb{R}^\ts\oplus\qk\mathbb{R}^\ts\right)}{\;\{\pm 1,\pm\qi,\pm\qj,\pm\qk\}}\;,
\eemaps
the last group being isomorphic to $\EnsQuot{\mathbb{R}^\ts}{\,\{\pm 1\}}\,$.
\begin{prop}
For all $\zvs\in\f{S}(\zl)$ and all $\boldsymbol{\zn}\in\zG^n$, the following diagram of  groups commutes
\begin{equation}\label{Diag:Ddet-Gdet}
\begin{tikzpicture}
\matrix(m)[matrix of math nodes, row sep=3em, column sep=6em]
{
\hGL(\boldsymbol{\zn}; \qH) &\mathbb{R}^\ts\oplus \qi \mathbb{R}^\ts\oplus \qj\mathbb{R}^\ts\oplus\qk\mathbb{R}^\ts \\
 & \EnsQuot{\mathbb{R}^\times}{\;\{\pm 1\}} \\
};
\path[->]
(m-1-1) edge node[auto] {$ \Gdet_{\zvs} $} (m-1-2)
            edge node[below left] {$ \mathrm{Ddet} $} (m-2-2)
(m-1-2) edge node[auto] {$ \pi $} (m-2-2)
;\end{tikzpicture}
\end{equation}
\end{prop}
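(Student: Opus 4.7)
Both $\pi \circ \Gdet_{\zvs}$ and $\mathrm{Ddet}|_{\hGL(\boldsymbol{\zn};\qH)}$ are group morphisms from $\hGL(\boldsymbol{\zn};\qH)$ to $\mathbb{R}^{\ts}/\{\pm 1\}$: for $\pi\circ\Gdet_{\zvs}$ this follows from Proposition \ref{Prop:GdetHomog} together with the fact that the NS-multiplier of Example \ref{quaternion} is $\{\pm 1\}$-valued, so the group $\mathbb{U}$ generated by its squares is trivial; for $\mathrm{Ddet}$ it is the defining property. The plan is to show the two morphisms coincide by reducing arbitrary homogeneous invertible matrices to the degree-zero case.

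\emph{Step 1 (degree-zero case).} Since $\qH$ is a graded division algebra, hence a crossed product with $\zG_{\qH^{\ts}} = \zG$, Proposition \ref{Prop:MatA0} yields a diagonal change-of-basis matrix $P$ such that $PXP^{-1} \in \Mat(n;\qH^{0}) = \Mat(n;\mathbb{R})$ and $\Gdet^{0}(X) = \det(PXP^{-1})$ for every $X \in \Mat^{0}(\boldsymbol{\zn};\qH)$. On the other hand, $\mathrm{Ddet}$ is invariant under conjugation (its target being abelian) and coincides with $|\det|$ on real matrices, so
\[
\mathrm{Ddet}(X) = \mathrm{Ddet}(PXP^{-1}) = |\det(PXP^{-1})| = \pi\big(\det(PXP^{-1})\big) = \pi\big(\Gdet^{0}(X)\big),
\]
settling the agreement on $\GL^{0}(\boldsymbol{\zn};\qH)$.

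\emph{Step 2 (reduction to degree zero).} For $X \in \hGL^{x}(\boldsymbol{\zn};\qH)$ with $x \in \zG$, pick a homogeneous invertible $q \in \qH^{-x}$. The element $q \cdot X$, computed via the $\cA$-module action of \eqref{A-Mod Matrix}, equals $DX$ where $D = \operatorname{diag}(\zl(q,\zn_i)\,q)$ is homogeneous of degree $-x$, so $q\cdot X$ has degree $0$. Combining parts (1) and (3) of Proposition \ref{GdetsHomogeneous} gives $\Gdet_{\zvs}(q\cdot X) = \pm\, q^{n}\cdot\Gdet_{\zvs}(X)$, the sign being a product of $\zvs$-factors and hence lying in $\{\pm 1\}$. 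Writing $q = ru$ with $r \in \mathbb{R}^{\ts}$ and $u \in \{1,\qi,\qj,\qk\}$, one has $q^n = r^{n}u^{n}$ with $u^{n} \in \{\pm 1,\pm\qi,\pm\qj,\pm\qk\}$, so $\pi(q^{n}) = r^{n}\cdot\{\pm 1\}$; hence $\pi\big(\Gdet_{\zvs}(q\cdot X)\big) = r^{n}\cdot\pi\big(\Gdet_{\zvs}(X)\big)$. On the Dieudonn\'e side, multiplicativity together with $\mathrm{Ddet}(D) = |q|^{n} = r^{n}\cdot\{\pm 1\}$ gives $\mathrm{Ddet}(q\cdot X) = r^{n}\cdot\mathrm{Ddet}(X)$ in $\mathbb{R}^{\ts}/\{\pm 1\}$. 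Since $q \cdot X$ has degree $0$, Step 1 applies and yields $\pi\big(\Gdet_{\zvs}(q\cdot X)\big) = \mathrm{Ddet}(q\cdot X)$. Cancelling the nonzero factor $r^{n}$ gives the desired $\pi\big(\Gdet_{\zvs}(X)\big) = \mathrm{Ddet}(X)$.

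The main technical obstacle is the bookkeeping of the cocycle-type factors coming from $\zvs$ in the identities of Proposition \ref{GdetsHomogeneous}; but they are all products of powers of $\{\pm 1\}$-valued expressions and therefore collapse after applying $\pi$. This is precisely what makes the diagram \eqref{Diag:Ddet-Gdet} well-posed and realises $\Gdet_{\zvs}$ as a lift of $\mathrm{Ddet}$ to homogeneous invertible quaternions.
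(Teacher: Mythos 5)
Your proof is correct and follows essentially the same strategy as the paper's: settle the degree-zero case by conjugating to a real matrix via Proposition \ref{Prop:MatA0}, then reduce an arbitrary homogeneous $X$ to a degree-zero matrix by left multiplication with a homogeneous scalar and absorb the $\zvs$-cocycle factors and unit-quaternion powers into the kernel of $\pi$. The paper instead factors $X = a\cdot X_0$ with $a \in \{1,\qi,\qj,\qk\}$ and $X_0$ of degree zero, which is the inverse of your rescaling $q\cdot X$; the two bookkeepings are equivalent.
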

\begin{proof}
Let $X_0\in\GL^{0}(\boldsymbol{\zn}; \qH)$. According to Proposition \ref{Prop:MatA0}, there exists $P\in\Mat^0( \boldsymbol{0}\times\boldsymbol{\zn};\mathbb{H})$ such that $PX_0P^{-1}\in\Mat(n;\mathbb{R})$ and $\Gdet^0(X_0)=\det(PX_0P^{-1})$. By multiplicativity of the Dieudonn\'e determinant, we get $\Ddet(X_0)=\Ddet(PX_0P^{-1})$. Since $\Ddet$ is equal to the classical determinant (modulo the sign) on real matrices, the diagram \eqref{Diag:Ddet-Gdet} commutes if restricted to the subgroup $\GL^0(\boldsymbol{\zn}; \qH)\leq\hGL(\boldsymbol{\zn}; \qH)\,$.

If $X\in\GL^{\degr{\qi}}(\boldsymbol{\zn}; \qH)\,$, then $X=\qi\cdot X_0$ with $X_0\in\GL^{0}(\boldsymbol{\zn}; \qH)\,$. By Proposition \ref{GdetsHomogeneous}, we have
$$
  \Gdet_\zvs(X)= \zvs(\degr{\qi},\degr{\qi})^{\frac{n(n-1)}{2}}\, \qi^n \cdot \Gdet_{\zvs}(X_0)\;,
$$
and $\zvs(\degr{\qi},\degr{\qi})^{\frac{n(n-1)}{2}}\, \qi^n\in\{\pm 1,\pm\qi,\pm\qj,\pm\qk\}\,$. Besides, it is known that
$$
\mathrm{Ddet}(X)=\mathrm{Ddet}(\qi\cdot\mathbb{I})\mathrm{Ddet}(X_0)=\mathrm{Ddet}(X_0)\;.
$$
Analogous results hold for $X\in\GL^{\degr{\qj}}(\boldsymbol{\zn}; \qH)$ and $X\in\GL^{\degr{\qk}}(\boldsymbol{\zn}; \qH)\,$. This conclude the proof.
\end{proof}

\subsubsection{The bad: non-uniqueness of $\Gdet_{\zvs}$}

Let $X\in\Mat(n,\qH)$ be a quaternionic matrix representing an endomorphism of $\qH^n$ in a basis $(e_i)\,$. Once the basis $(e_i)$ receives a degree $\boldsymbol{\zn}\in\zG^n$, the matrix $X$ becomes a graded matrix $X\in\Mat(\boldsymbol{\zn};\qH)\,$.

The value of $\Gdet_{\zvs}(X)$ depends both on $\zvs\in\mathfrak{S}(\lambda)$ and $\boldsymbol{\zn}\in\zG^n$. By Lemma \ref{lem:biadditif}, the multipliers $\zvs\in\f{S}(\zl)$ are characterized by the values of $\zvs(\degr{\qi},\degr{\qi}), \, \zvs(\degr{\qi},\degr{\qj})$ and $\zvs(\degr{\qj},\degr{\qj})\,$, which can be either $1$ or $-1$.
Hence, for each of the $4^n$ possible degrees $\boldsymbol{\zn}\in\zG^n$, there are $8$ determinant-like functions $\Gdet_{\zvs}$ on $\Mat(\boldsymbol{\zn};\qH)\,$.

\subsubsection{The ugly: $\Gdet_{\zvs}$ on inhomogeneous matrices}
We compute the values of   $\Gdet_{\zvs}(X)$ for a quaternionic matrix $X\in\Mat(2,\qH)\,$. This shows that, indeed,  the value of  $\Gdet_{\zvs}(X)\,$, and even its vanishing, strongly depends on both choices: of multiplier $\zvs\in\f{S}(\lambda)$ and of degree $\boldsymbol{\zn}\in\zG^2$ of the basis.

First, we work with the invertible matrix
$$
X:=\lp \begin{array}{cc}1&\qj\\ \qj&1 \end{array}\rp \in\GL(2;\qH)\;,
$$
with inverse $X^{-1}=\frac{1}{2}\lp \begin{array}{cc}1&-\qj\\ -\qj&1 \end{array}\rp $.
The matrix $X$ is a homogeneous graded matrix of $\GL(\boldsymbol{\zn};\qH)$ if and only if the chosen degree $\boldsymbol{\zn}\in\zG^2$ is of the form $\boldsymbol{\zn}=(\boldsymbol{\zn}_1,\boldsymbol{\zn}_1+\degr{j})\,$, with $\boldsymbol{\zn}_1\in\zG$.
For such a degree, the matrix $X$ is of degree $0$ and, for all $\zvs\in\f{S}(\lambda)$, we get
$$
\Gdet_{\zvs}(X)=\Gdet^0(X)=2\;.
$$
However, for a different $\boldsymbol{\zn}$, the result is completely different. For instance, if $\boldsymbol{\zn}=(0,0)\,$, we then obtain $ J_{\zvs}(X)=X$ and
$$
\Gdet_{\zvs}(X)=1-\qj\star \qj=1+\zvs(\degr{\qj},\degr{\qj})\;,
$$
which means that
$$
\Gdet_{\zvs}(X)=
\begin{cases}
0 \quad\text{if}\; \zvs(\qj,\qj)=-1\;,\\
2 \quad\text{if}\; \zvs(\qj,\qj)=1\;.
\end{cases}
$$
Each case happens for half of the choices of $\zvs\in\mathfrak{S}(\zl)\,$.

Second, we work with a rank one matrix
$$
Y:=\lp \begin{array}{cc}1&\qj\\ -\qj&1 \end{array}\rp \in\Mat(2;\qH)\;,
$$
whose kernel is given by $\qH\cdot (1,\qj)\,$.
Again, the matrix $Y$ is homogeneous of degree $0$ as a graded matrix in $\Mat(\boldsymbol{\zn};\mathbb{H})$ if and only if $\boldsymbol{\zn}=(\boldsymbol{\zn}_1,\boldsymbol{\zn}_1+\degr{j})\,$ for some $\boldsymbol{\zn}_1\in\zG$. We get then
$$
\Gdet_{\zvs}(Y)=\Gdet^0(Y)=0\;,
$$
for all $\zvs\in\f{S}(\lambda)\,$. However, if $\boldsymbol{\zn}=(0,0)\,$, we then obtain
$$
\Gdet_{\zvs}(Y)=1-\zvs(\qj,\qj)=
\begin{cases}
2 \quad\text{if}\; \zvs(\qj,\qj)=-1\;,\\
0 \quad\text{if}\; \zvs(\qj,\qj)=1\;.
\end{cases}
$$
As a conclusion, the non-uniqueness of the functions $\Gdet_\zvs$ prevents them to characterize invertible matrices.

\section{Graded Berezinian}

In this section, we go back to the general case of a $(\zG,\zl)$-commutative algebra $\cA\,$, for $\zG$ a finitely generate abelian group with non-zero odd part $\zG\odp\,$.
Applying a Nekludova-Scheunert functor $\Ivs$ to $\cA\,$, we obtain a supercommutative algebra $\cuA$.

For matrices with supercommutative entries, the notion of determinant is replaced by the Berezinian, which is a supergroup morphism
\bemaps
\ber:\GL^{\pari{0}}((n,m);\cuA)\to \cuA^{\times}\;,
\eemaps
with $(n,m)\in\N[\Z_2]\,$. The supergroup $\GL^{\pari{0}}((n,m);\cuA)$ of even invertible supermatrices is often written as $\GL(n|m;\cuA)\,$.
Pulling back the Berezinian to the graded case, we obtain the notion of graded Berezinian.

\subsection{Preliminaries}
Let $\boldsymbol{\zn}=(\boldsymbol{\zn}\evp\,, \boldsymbol{\zn}\odp)\in \zG\evp^{r\evp} \ts~\zG^{r\odp}\odp$, where $(r\evp,r\odp)\in\N^2$. We introduce the group of even homogeneous invertible matrices
$$
\hGL\evp(\boldsymbol{\zn};\cuA)
:=
\bigcup_{\zg\in \zG\evp}\GL^{\zg}(\boldsymbol{\zn};\cuA).
$$
Assume $J_\zvs$ is the map \eqref{Js}, associated to a NS-multiplier $\zvs\in\fS(\zl)$.
The composites
\[
\begin{tikzpicture}
\matrix(m)[matrix of math nodes, column sep=3em]
{
\hGL\evp(\boldsymbol{\zn};\cA)
&
\GL(r\evp|r\odp\,;\cuA)
&
(\cuA\evp)^{\ts}
&
(\cA\evp)^{\ts} \\
};
\path[->]
(m-1-1) edge node[below]{$J_{\zvs}$} (m-1-2)
(m-1-2) edge node[below]{$\ber$} (m-1-3)
(m-1-3) edge node[above]{$\sim$} node[below]{$J_{\zvs}^{-1}$} (m-1-4);
\end{tikzpicture}
\]
define a family of maps parameterized by $\zvs\in\f{S}(\zl)$,
\bemaps
\Gber_{\zvs}:& \hGL\evp(\boldsymbol{\zn};\cA) &\to & (\cA\evp)^{\ts}\;.
\eemaps
Considering the last arrow, labeled with $J_{\zvs}^{-1}$, is a matter of taste as this is the identity map.
This only changes the algebra structure and makes clear that the product and inverse involved in Formula \eqref{Gbers}
are taken in $\cA$ and not in $\cuA$.

Any matrix $X\in\hGL\evp(\boldsymbol{\zn};\cA)$ reads as
$$
X=\lp \begin{array}{cc} \cX_{00} & \cX_{01}\\ \cX_{10}& \cX_{11} \end{array} \rp \;,
$$
where, in particular, $\cX_{00}\in \hGL\evp(\boldsymbol{\zn}\evp;\cA\evp)\,$
and $\cX_{11}\in \hGL\evp( \boldsymbol{\zn}\odp;\cA\evp)\,$. The Formula \eqref{Gbers}, giving the graded Berezinian of $X$,
involves the graded determinant of $\cX_{11}$. We define it just like for matrices of even degree $\boldsymbol{\zn}\evp$, via the formula $\Gdet_\zvs=\det\circ J_\zvs$.

\begin{rema}
Let $\zp\in\zG$ and $\boldsymbol{\zp}:=(\zp,\ldots,\zp)\in\zG^{r\odp}$.
The identity map $T_\zp:\Mat(\boldsymbol{\zn}\odp;\cA)\to\Mat(\boldsymbol{\zn}\odp+\boldsymbol{\zp};\cA)$
is a morphism of $\zG$-algebra and of graded $\cA$-module.
From the Formula \eqref{Jsentry}, defining $J_\zvs$, we deduce that
$\Gdet_{\zvs}(\cX_{11})=\zl(x,\pi)^{r\odp}\Gdet_{\zvs}(T_\zp(\cX_{11}))$ for all $\pi\in\zG$.
\end{rema}

\subsection{Proof of Theorem \ref{Thm:F}}

According to  Proposition \eqref{Jvsfun},
the map $J_\zvs$ restricts to $\GL^0(\boldsymbol{\zn}; \cA)$ as a group morphism.
Hence, the map
\bemaps
\Gber^0:=\ber\circ\,J_{\zvs}: \GL^0(\boldsymbol{\zn}; \cA) \to  (\cA)^{\ts}\;
\eemaps
defines  a group morphism. The independence of $\Gber^0$ in $\zvs\in\fS(\zl)$ follows from the Formula \eqref{Gbers} and the equality $\Gdet_\zvs=\Gdet^0$ on  $0$-degree matrices.

It remains to prove Formula \eqref{Gbers}.
Let us consider a homogeneous matrix $X\in \GL^x((\boldsymbol{\zn}\evp\,, \boldsymbol{\zn}\odp);\cA)$ of arbitrary even degree $x\in\zG\evp\,$.
Decomposing $X$ in block matrices with respect to parity, it reads as a supermatrix,
$$
X=(X^i_{\;\,j})_{i,j=1,\ldots,n}=\lp\begin{array}{c|c} \cX_{00}& \cX_{01}\\ \hline \cX_{10} & \cX_{11} \end{array}\rp\;.
$$
Since moding out by the ideal $\cA_{\odp}$ of odd elements preserves invertibility, we deduce that the block $\cX_{11}$ is invertible. This allows for an UDL decomposition of $X$, with respect to the parity block subdivision,
$$
X=UDL=
\lp\begin{array}{c|c} \I & \cX_{01} \cX_{11}^{-1}\\ \hline  & \I \end{array}\rp
\lp\begin{array}{c|c} \cX_{00}- \cX_{01} \cX_{11}^{-1} \cX_{10} \\ \hline  & \cX_{11} \end{array}\rp
\lp\begin{array}{c|c} \I& \\ \hline \cX_{11}^{-1} \cX_{10} & \I \end{array}\rp
\;.
$$
By construction, we also see that the three graded matrices in the decomposition are homogeneous: the block triangular matrices $U$ and $L$ are of degree $0$, whereas the diagonal matrix $D$ is of the same degree as the original matrix $X$.
Thanks to the properties of $J_{\zvs}$, we then have
$$
J_{\zvs}(X)=J_{\zvs}(U) J_{\zvs}(D) J_{\zvs}(L)
$$
which is again a UDL decomposition. Hence, applying the Berezinian, we finally obtain
\be\label{step1}
\ber(J_{\zvs}(X))=\ber(J_{\zvs}(D)),
\ee
since the Berezinian is multiplicative and equal to $1$ on a block unitriangular matrices.
Let us recall that the Berezinian of a block diagonal invertible supermatrix is equal to
$$
\ber\lp \begin{array}{c|c} \cY_{00} & \\ \hline  & \cY_{11} \end{array}\rp
=
\det(\cY_{00}) \det(\cY_{11})^{-1}\;.
$$
Hence, using \eqref{step1}, the properties of $J_{\zvs}$ and the equality $(J_{\zvs})^{-1}=J_{\zd}$ with $\zd(x,y):=\zvs(x,y)^{-1}$, for all $x,y\in\zG$, we get
\beas
\Gber(X)
&=& J_{\zvs}^{-1} \lp \ber\lp J_{\zvs}(X)\rp\rp
\\
&=&
J_{\zvs}^{-1}
\Big(
\det\lp J_{\zvs}\lp  \cX_{00}- \cX_{01} \cX_{11}^{-1} \cX_{10} \rp \rp
\star
\det \lp J_{\zvs}( \cX_{11})\rp^{- 1}
\Big)
\\
&=&
\zvs(r\evp\,x,-r\odp\,x)\,
J_{\zvs}^{-1}
\Big(
\det\lp J_{\zvs}\lp  \cX_{00}- \cX_{01} \cX_{11}^{-1} \cX_{10} \rp \rp \Big)
\cdot
J_{\zvs}^{-1}
\Big(
\det\lp J_{\zvs}( \cX_{11})\rp^{-1}
\Big)
\\
&=&
\zvs(r\evp\,x,-r\odp\,x)\zvs(-r\odp\,x,-r\odp\,x)\,
\Gdet_{\zvs}\lp  \cX_{00}- \cX_{01} \cX_{11}^{-1} \cX_{10} \rp
\cdot
\Gdet_{\zvs}( \cX_{11})^{-1}\; ,
\eeas
which recovers Formula \eqref{Gbers}.


\appendix \section{Basic Notions of Category Theory}
\setcounter{thm}{0}
\renewcommand{\thethm}{\textsc{\roman{thm}}}

In this section we recall basic notions of category theory used in the paper. The main references are \cite{McL}, \cite{Hov} and \cite{Kelly}.

\subsection{Categories}

A  \emph{locally small} (respectively \emph{small}) \emph{category} $\CC$ consists of
\begin{itemize}
    \item a class (respectively a set) of \emph{objects} $\Obj(\CC)\,$;
    \item for every pair of objects $X,Y\in \Obj(\CC)\,$, a set of \emph{morphisms} $\Hom_{\CC}(X,Y)$ (if no confusion is possible the subscript is usually dropped);
    \item for any triple $X,Y,Z\in \Obj(\CC)\,$, a \emph{composition of morphisms}
    $$ \circ :\, \Hom(Y,Z) \ts \Hom(X,Y) \;\raa\; \Hom(X,Z) $$
\end{itemize}
which satisfy the following axioms
\begin{description}
    \item[Associativity] For any given morphisms $f\in \Hom(Z,W)\,$, $g\in \Hom(Y,Z)$ and $h\in \Hom(X,Y)\,$, the equality $f\circ (g \circ h) = (f\circ g) \circ h$ holds.
    \item[Identity] For every $X\in \Obj(\CC)\,$, there exists a morphism $\id_{\hspace{-0.1cm}\phantom{.}_{X}} \in \Hom(X,X)$ such that, for any morphisms $f\in \Hom(X,Z)$ and $g\in \Hom(Y,X)\,$, the following equalities hold
        \beas
        f\circ \id_{\hspace{-0.1cm}\phantom{.}_{X}} = f & \mbox{ and } & \id_{\hspace{-0.1cm}\phantom{.}_{X}} \circ \,g=g \;.
        \eeas
\end{description}

All the categories encountered in this paper are \emph{concrete categories}, i.e., the objects are sets with additional structure, and morphisms are functions. This translates into the existence of a forgetful functor to the category of sets, denoted by "{\rm forget}".

\subsection{Functors}

A \emph{functor} $F:\, \CC \to \DD$ between categories consist of
\begin{itemize}
    \item an \emph{object function} $F:\,\Obj(\CC) \raa \Obj(\DD)\,$,
    \item for each pair $X,Y \in \Obj(\CC)$, an \emph{arrow function} $ F:\, \Hom_{\CC}(X,Y) \to \Hom_{\DD}(F(X),F(Y))\,$,
\end{itemize}
such that $F(\id_{\hspace{-0.1cm}\phantom{.}_{X}})=\id_{\hspace{-0.1cm}\phantom{.}_{F(X)}}$ for all $X\in \Obj(\CC)\,$, and, when the composition is meaningful, $F(f\circ \,g)=F(f) \circ \,F(g)\,$.

The \emph{composite of two functors} $F:\, \cat{B} \to \cat{C}$ and $G:\,\cat{A}\to\cat{B}$ is a functor $F\circ G:\, \cat{A}\to \cat{C}$ given by usual composition of the corresponding object functions and arrow functions. Composition of functors is associative.

A particular example of functor is the \emph{identity functor} $\I_{\CC}:\, \CC \to \CC$ which assigns to every object, respectively to every morphism of $\CC\,$, itself. It acts as the identity element for the composition of functors.

A functor $F:\,\cat{A} \to \cat{B}$ is said \emph{invertible} if there exists a second functor $G:\, \cat{B} \to \cat{A}$ such that
\beas
F\circ G=\I_{\cat{B}} &  \mbox{ and }& G\circ F=\I_{\cat{A}}\,.
\eeas

\subsection{Natural Transformations}

If a functor is intuitively a morphism in the category of (small/locally small) categories, a natural transformation is a morphism between functors.

More precisely, if $F$ and $G$ are two functors between the same categories $\CC$ and $\DD$,  a \emph{natural transformation} $\zh$ between the functors $F$ and $G$ is represented by the diagram
\be \label{nattrans}
\begin{tikzpicture}
\matrix(m)[matrix of math nodes, column sep=6em]
{
\CC & \DD\\
};
\draw[->](m-1-1) to[bend left=40] node[label=above:$F$] (U) {} (m-1-2);
\draw[->] (m-1-1) to[bend right=40,name=D] node[label=below:$ G$] (V) {} (m-1-2);
\draw[double,double equal sign distance,-implies,shorten >=5pt,shorten <=5pt]
  (U) -- node[label=right:$\zh$] {} (V);
\end{tikzpicture}
\ee
and $\zh$ consists of a family of maps
$$
\zh_{\hspace{-0.1cm}\phantom{.}_{X}}:\, F(X) \to G(X) \;, 
$$
which is natural in $X\in \Obj(\CC)\,$. This means that, for every morphism $h\in \Hom_{\CC}(X,Y)\,$, the following diagram commutes,
\[
\begin{tikzpicture}
\matrix(m)[matrix of math nodes, row sep=3em, column sep =6em]
{
F(X) & F(Y)\\
G(X) & G(Y) \\
};
\path[->]
(m-1-1) edge node[above]{$ F(h) $} (m-1-2)
            edge node[left]{$\zh_{\hspace{-0.1cm}\phantom{.}_{X}} $} (m-2-1)
(m-2-1) edge node[below]{$ G(h) $} (m-2-2)
(m-1-2) edge node[right]{$ \zh_{\hspace{-0.1cm}\phantom{.}_{Y}} $} (m-2-2)
;
\end{tikzpicture}
\]
There are two ways of composing natural transformation: "vertically" or "horizontally".

\subsubsection{Vertical composition of transformations} \label{vertical}
Given two natural transformations
\[
\begin{tikzpicture}
\matrix(m)[matrix of math nodes, column sep=10em]
{
\CC & \DD\\
};
\draw[->](m-1-1) to[bend left=70] node[label=above:$F$] (U) {} (m-1-2);
\draw[->] (m-1-1) to node[below] (V) {$G$} (m-1-2);
\draw[->] (m-1-1) to[bend right=70] node[label=below:$ H$] (W) {} (m-1-2);
\draw[double,double equal sign distance,-implies,shorten >=5pt,shorten <=5pt]
  (U) -- node[label=right:$\zh$] {} (V);
  \draw[double,double equal sign distance,-implies,shorten <=2pt]
  (V) -- node[label=right:$\zvy$] {} (W);
\end{tikzpicture}
\]
their vertical composition is the natural transformation
\[
\begin{tikzpicture}
\matrix(m)[matrix of math nodes, column sep=8em]
{
\CC & \DD\\
};
\draw[->](m-1-1) to[bend left=40] node[label=above:$F$] (U) {} (m-1-2);
\draw[->] (m-1-1) to[bend right=40,name=D] node[label=below:$ H$] (V) {} (m-1-2);
\draw[double,double equal sign distance,-implies,shorten >=5pt,shorten <=5pt]
  (U) -- node[label=right:$\zvy  \bullet  \zh$] {} (V);
\end{tikzpicture}
\]
defined component-wise by $(\zvy  \bullet  \zh)_{\hspace{-0.1cm}\phantom{.}_{X}}:= \zvy_{\hspace{-0.1cm}\phantom{.}_{X}} \circ \zh_{\hspace{-0.1cm}\phantom{.}_{X}}\,$, $X\in \Obj(\CC)\,$.


Vertical composition is the law of composition of the \emph{functor category} $\DD^{\CC}$ (also denoted by $\op{Fun}(\CC,\DD)$), whose objects are functors between $\CC$ and $\DD$ and whose arrows are the natural transformations between such functors. For every functor $F$, the identity map $\Id_{\hspace{-0.1cm}\phantom{.}_{F}}$ is the natural transformation of components $\Id_{\hspace{-0.1cm}\phantom{.}_{F,X}}=\id_{\hspace{-0.1cm}\phantom{.}_{F(X)}}\,$.

\subsubsection{Horizontal composition of transformations}
 Given two natural transformations
\[
\begin{tikzpicture}
\matrix(m)[matrix of math nodes, column sep=8em]
{
\cat{A}& \cat{B} & \cat{C}  \\
};
\draw[->](m-1-1) to[bend left=40] node[label=above:$F$] (U') {} (m-1-2);
\draw[->] (m-1-1) to[bend right=40,name=D] node[label=below:$G$] (V') {} (m-1-2);
\draw[double,double equal sign distance,-implies,shorten >=5pt,shorten <=5pt]
  (U') -- node[label=right:$\zh$] {} (V');
\draw[->](m-1-2) edge [bend left=40] node[label=above:$F'$] (U) {} (m-1-3.north west);
\draw[->] (m-1-2) to[bend right=40,name=D] node[label=below:$ G'$] (V) {} (m-1-3.south west);
\draw[double,double equal sign distance,-implies,shorten >=5pt,shorten <=5pt]
  (U) -- node[label=right:$\zh'$] {} (V);
\end{tikzpicture}
\]
their horizontal composition is the natural transformation
\[\begin{tikzpicture}
\matrix(m)[matrix of math nodes, column sep=8em]
{
\cat{A} & \cat{C}\\
};
\draw[->](m-1-1) to[bend left=40] node[label=above:$F'\circ F$] (U) {} (m-1-2);
\draw[->] (m-1-1) to[bend right=40,name=D] node[label=below:$G'\circ G$] (V) {} (m-1-2);
\draw[double,double equal sign distance,-implies,shorten >=5pt,shorten <=5pt]
  (U) -- node[label=right:$\zh'\circ \zh $] {} (V);
\end{tikzpicture}
\]
defined component-wise by $(\zh'\circ \zh)_{\hspace{-0.1cm}\phantom{.}_{X}}:=\zh'_{\hspace{-0.1cm}\phantom{.}_{G(X)}}\circ F'(\zh_{\hspace{-0.1cm}\phantom{.}_{X}})=G'(\zh_{\hspace{-0.1cm}\phantom{.}_{X}})\circ \zh'_{\hspace{-0.1cm}\phantom{.}_{F(X)}}\,$, for all $X\in \Obj(\cat{A})\,$.

\subsubsection{Whiskering}\label{whisker}
The horizontal composition allows to define the composition of a natural transformation with a functor, also called \emph{whiskering}. Given a natural transformation as in~\eqref{nattrans} and a functor $H: \, \cat{B} \to \CC\,$, their composite
\[
\begin{tikzpicture}
\matrix(m)[matrix of math nodes, column sep=6em]
{
\cat{B} & \cat{C} & \cat{D} \\
};
\path[->] (m-1-1) edge node[above] {$H$} (m-1-2);
\draw[->] (m-1-2) to [bend left=40] node[above](U){$F$}(m-1-3);
\draw[->] (m-1-2) to [bend right=40] node[below](V){$G$}(m-1-3);
\draw[double,double equal sign distance,-implies,shorten >=5pt,shorten <=5pt]
  (U) -- node[label=right:$\zh$] {} (V);
\end{tikzpicture}
\]
is defined as the horizontal composition of natural transformations $\Id_{\hspace{-0.1cm}\phantom{.}_{H}} \circ \;\zh\,$, i.e.
\[
\begin{tikzpicture}
\matrix(m)[matrix of math nodes, column sep=8em]
{
\cat{B}& \CC & \DD \quad  = \quad \cat{B} & \DD \\
};
\draw[->](m-1-2) edge [bend left=40] node[label=above:$F$] (U) {} (m-1-3.north west);
\draw[->] (m-1-2) to[bend right=40,name=D] node[label=below:$ G$] (V) {} (m-1-3.south west);
\draw[double,double equal sign distance,-implies,shorten >=5pt,shorten <=5pt]
  (U) -- node[label=right:$\zh$] {} (V);
\draw[->](m-1-1) to[bend left=40] node[label=above:$H$] (U') {} (m-1-2);
\draw[->] (m-1-1) to[bend right=40,name=D] node[label=below:$H$] (V') {} (m-1-2);
\draw[double,double equal sign distance,-implies,shorten >=5pt,shorten <=5pt]
  (U') -- node[label=right:$\Id_{\hspace{-0.1cm}\phantom{.}_{H}}$] {} (V');
  \draw[->](m-1-3.north east) to[bend left=40] node[label=above:$H \circ  F$] (U'') {} (m-1-4);
\draw[->] (m-1-3.south east) to[bend right=40,name=D] node[label=below:$ H\circ G$] (V'') {} (m-1-4);
\draw[double,double equal sign distance,-implies,shorten >=5pt,shorten <=5pt]
  (U'') -- node[label=right:$\Id_{\hspace{-0.1cm}\phantom{.}_{H}}\!\circ \zh$] {} (V'');
\end{tikzpicture}
\]
Clearly, one can do the same for right composition of $\zh$ with a functor $H':\, \DD \to \cat{A}\,$,
\[
\begin{tikzpicture}
\matrix(m)[matrix of math nodes, column sep=6em]
{
 \cat{C} & \cat{D} &\cat{A} \\
};
\path[->] (m-1-2) edge node[above] {$H'$} (m-1-3);
\draw[->] (m-1-1) to [bend left=40] node[above](U){$F$}(m-1-2);
\draw[->] (m-1-1) to [bend right=40] node[below](V){$G$}(m-1-2);
\draw[double,double equal sign distance,-implies,shorten >=5pt,shorten <=5pt]
  (U) -- node[label=right:$\zh$] {} (V);
\end{tikzpicture}
\]

\subsection{Adjoint Functors}


A functor $F:\,\CC \to \DD$ is the \emph{left-adjoint} of a functor $G:\,\DD\to \CC$ (or equivalently $G$ is the \emph{right-adjoint} of $F$) if there exists a bijection between the hom-sets,
$$
\Hom_{\DD}(F(X),U) \simeq \Hom_{\CC}(X,G(U))
$$
which is natural in both $X\in \Obj(\CC)$ and $U\in\Obj(\DD)$. This means that
\beas
\Hom_{\DD}(F(X),-) \overset{\sim}{\Rightarrow} \Hom_{\CC}(X,G(-))
& \mbox{ and } &
\Hom_{\DD}(F(-),U) \overset{\sim}{\Rightarrow} \Hom_{\CC}(-,G(U))
 \eeas
 are natural transformations (for $X$ and $U$ fixed respectively).


\subsection{Closed Monoidal Categories}

A \emph{monoidal category} is a (locally small) category $\CC$ endowed with a bifunctor $\ots\,$,
 a unit object $\I\,$, and
 three natural isomorphisms
\beas
\za=\za_{\hspace{-0.1cm}\phantom{.}_{X,Y,Z}}\!\!&\!\!:\!\!&\!\! (X\ots Y)\ots Z \xrightarrow{\sim}{} X\ots(Y\ots Z) \qquad \mbox{(\emph{associator} or \emph{associativity constraint}) }\\
l =l_{\hspace{-0.1cm}\phantom{.}_{X}}\!\!&\!\!:\!\!&\!\! \I \ots X \xrightarrow{\sim}{} X\\
r =r_{\hspace{-0.1cm}\phantom{.}_{X}}\!\!&\!\!:\!\!&\!\!  X \ots \I \xrightarrow{\sim}{} X
\eeas
such that
$l_{\I} = r_{\I}: \I \to \I\,$,
and they satisfy the following coherence laws:
\[
\begin{tikzpicture}
\matrix(m)[matrix of math nodes, row sep=4em, column sep=-1em]
{
 & & &  (X\ots Y)\ots (Z\ots W) && & \\
((X\ots Y)\ots Z) \ots W & & &&& & X\ots (Y\ots (Z\ots W)) \\
  &(X\ots (Y\ots Z))\ots W &  & &  & X \ots ((Y\ots Z)\ots W) & \\
};
\path[->]
(m-2-1) edge node[auto] {$ \za_{\hspace{-0.1cm}\phantom{.}_{X\ots Y,Z,W}} $} (m-1-4)
(m-1-4) edge node[auto] {$ \za_{\hspace{-0.1cm}\phantom{.}_{X,Y,Z\ots W}} $} (m-2-7)
(m-3-6) edge node[below right] {$\id_{\hspace{-0.1cm}\phantom{.}_{X}} \ots \za_{\hspace{-0.1cm}\phantom{.}_{Y,Z,W}} $} (m-2-7)
(m-2-1) edge node[below left] {$ \za_{\hspace{-0.1cm}\phantom{.}_{X,Y,Z}}\ots \id_{\hspace{-0.1cm}\phantom{.}_{W}} $} (m-3-2)
(m-3-2) edge node[below] {$ \za_{\hspace{-0.1cm}\phantom{.}_{X,Y\ots Z,W}} $} (m-3-6)
; \end{tikzpicture}
\]
\[
\begin{tikzpicture}
\matrix(m)[matrix of math nodes, row sep=3em, column sep=2em]
{
(X \ots\I) \ots Y & & X \ots(\I \ots Y) \\
& X \ots Y & \\
};
\path[->]
(m-1-1) edge node[auto] {$ \za_{\hspace{-0.1cm}\phantom{.}_{X,\I,Y}}$} (m-1-3)
    edge node[below left] {$r_{\hspace{-0.1cm}\phantom{.}_{X}} \ots \id_{\hspace{-0.1cm}\phantom{.}_{Y}} $} (m-2-2)
  (m-1-3) edge node[below right] {$\id_{\hspace{-0.1cm}\phantom{.}_{X}} \ots \, l_{\hspace{-0.1cm}\phantom{.}_{Y}} $}(m-2-2)
; \end{tikzpicture}
\]

\medskip
A monoidal category is called \emph{braided} if it is endowed with a natural isomorphism
$$ \zb=\zb_{\hspace{-0.1cm}\phantom{.}_{X,Y}}:\, X\ots Y \xrightarrow{\sim} Y\ots X  \;,$$
(usually called \emph{commutativity constraint} or simply \emph{braiding}) satisfying the following coherence laws:
\[
\begin{tikzpicture}
\matrix(m)[matrix of math nodes, row sep=4em, column sep=2em]
{
 & X\ots (Y\ots Z)  && (Y\ots Z) \ots X & \\
 (X\ots Y)\ots Z  & && & Y \ots (Z \ots X)  \\
 &(Y\ots X)\ots Z &&  Y \ots (X\ots Z) & \\
};
\path[->]
(m-2-1) edge node[auto] {$ \za_{\hspace{-0.1cm}\phantom{.}_{X,Y,Z}} $} (m-1-2)
        edge node[below left] {$ \zb_{\hspace{-0.1cm}\phantom{.}_{X,Y}}\ots \id_{\hspace{-0.1cm}\phantom{.}_{Z}} $} (m-3-2)
(m-1-2) edge node[auto] {$ \zb_{\hspace{-0.1cm}\phantom{.}_{X,Y\ots Z}}  $} (m-1-4)
(m-1-4) edge node[auto] {$ \za_{\hspace{-0.1cm}\phantom{.}_{Y,Z,X}} $} (m-2-5)
(m-3-2)edge node[below] {$ \za_{\hspace{-0.1cm}\phantom{.}_{Y,X,Z}} $} (m-3-4)
(m-3-4) edge node[below right] {$ \id_{\hspace{-0.1cm}\phantom{.}_{Y}} \ots \zb_{\hspace{-0.1cm}\phantom{.}_{X,Z}} $} (m-2-5)
; \end{tikzpicture}
\]
\[
\begin{tikzpicture}
\matrix(m)[matrix of math nodes, row sep=3em, column sep=5em]
{
 \I \ots X & & X \ots \I  \\
& X & \\
};
\path[->]
(m-1-1) edge node[auto] {$ \zb_{\hspace{-0.1cm}\phantom{.}_{\I,X}}$} (m-1-3)
    edge node[below left] {$l_{\hspace{-0.1cm}\phantom{.}_{X}} $} (m-2-2)
  (m-1-3) edge node[below right] {$r_{\hspace{-0.1cm}\phantom{.}_{X}} $}(m-2-2)
; \end{tikzpicture}
\]

If the braiding also satisfies
$$\zb_{\hspace{-0.1cm}\phantom{.}_{Y,X}}\circ \zb_{\hspace{-0.1cm}\phantom{.}_{X,Y}}=\id_{\hspace{-0.1cm}\phantom{.}_{X\ots Y}} \;, $$
$\text{for all } X,Y\in \Obj(\CC)\,$, then $(\CC, \ots, \I,\za,l,r, \zb)$ is a \emph{symmetric monoidal category}.

\medskip
A monoidal category $(\CC,\ots,\I,\za,l,r)$ is a \emph{right} (resp. \emph{left}) \emph{closed monoidal category} if there exists a bifunctor
\beas
\iHom^{right}_{\CC}(-,-):\, \CC\opp \ts \CC  \raa  \CC
& (\mbox{resp.} &
\iHom^{left}_{\CC}(-,-):\, \CC\opp \ts \CC  \raa  \CC \; )\;,
\eeas
such that for every $X \in \Obj(\CC)\,$, the functor $\iHom^{right}_{\CC}(X,-)$ (resp. $\iHom^{left}_{\CC}(-,X)$)
is the right adjoint of the functor   $- \ots X$ (resp. of the functor $X \ots -$).
A \emph{closed monoidal category} is a left and right closed monoidal category.



Note that if  a closed monoidal category is  symmetric, the right and left $\iHom$ are naturally isomorphic thanks to the braiding. Hence, we identify them and designate the \emph{internal $\iHom$} by  $\iHom(-,-)\,$.

\medskip


\end{document}